\documentclass[10pt]{elsarticle}
\usepackage{amssymb,amsfonts,amsmath,mathrsfs,mathtools}
\usepackage{graphicx,epsfig,subfigure}
\usepackage{bm}
\usepackage[margin=1in,papersize={8.5in,11in}]{geometry}
\usepackage{times}
\usepackage{multirow}
\usepackage{color}
\usepackage[ruled,vlined]{algorithm2e}

\def\vs{\vspace{0.2cm}}

\DeclareMathOperator*{\argmin}{\arg\!\min}

\newtheorem{lemma}{\bf Lemma}[section]

\newtheorem{proposition}{\bf Proposition}[section]

\newenvironment{proof}{{\noindent \bf \em Proof:}}{\hfill$\square$}

\renewcommand{\top}{\text{T}}

\title{Rank-adaptive tensor methods for high-dimensional nonlinear PDEs}

\begin{document}
\begin{frontmatter}

\author[ucsc]{Alec Dektor}
\author[ucsc]{Abram Rodgers}
\author[ucsc]{Daniele Venturi\corref{correspondingAuthor}}
\ead{venturi@ucsc.edu}

\address[ucsc]{Department of Applied Mathematics, University of California Santa Cruz\\ Santa Cruz (CA) 95064}

\cortext[correspondingAuthor]{Corresponding author}

\journal{ArXiv}

\begin{abstract}
We present a new rank-adaptive tensor method to compute the numerical solution of high-dimensional nonlinear PDEs. The method combines functional tensor train (FTT) series expansions, operator splitting time integration, and a new rank-adaptive algorithm based on a thresholding criterion that limits the component of the PDE velocity vector normal to the FTT tensor manifold. This yields a scheme that can add or remove tensor modes adaptively from the PDE solution as time integration proceeds.  The new method is designed to improve computational efficiency, accuracy and robustness in numerical integration of high-dimensional problems. In particular, it overcomes well-known computational challenges associated with dynamic tensor integration, including low-rank modeling errors and the need to invert covariance matrices of tensor cores at each time step. Numerical applications are presented and discussed for linear and nonlinear advection problems in two dimensions, and for a four-dimensional Fokker-Planck equation.
\end{abstract}
\end{frontmatter}

\section{Introduction}
\label{sec:intro}
High-dimensional partial differential equations (PDEs) arise 
in many areas of engineering, physical sciences and 
mathematics. Classical examples are equations involving 
probability density functions (PDFs) such as 
the Fokker--Planck equation \cite{Risken}, the 
Liouville equation \cite{Venturi_PRS,HeyrimJCP_2014,HeyrimPRS_2014}, and the Boltzmann equation \cite{cercignani1988,dimarco2014,TensorBGK}.  
More recently, high-dimensional PDEs 
have also become central to many new areas of 
application such as optimal mass transport \cite{Osher2019,Villani}, 
random dynamical systems \cite{Venturi_MZ,Venturi_PRS}, 
mean field games \cite{Weinan2019,Ruthotto2020}, and 
functional-differential equations 
\cite{VenturiSpectral,venturi2018numerical}.
Computing the numerical solution to high-dimensional PDEs 
is an extremely challenging problem which has attracted 
substantial research efforts in recent years.
Techniques such as sparse collocation 
methods \cite{Bungartz,Chkifa,Barthelmann,Foo1,Akil}, 
high-dimensional model representations 
\cite{Li1,CaoCG09,Baldeaux},  
deep neural networks \cite{Raissi,Raissi1,Zhu2019}, 
and numerical tensor methods 
\cite{khoromskij,Bachmayr,rodgers2020stability,parr_tensor,
Hackbusch_book,Vandereycken_2019} 
were proposed to mitigate the exponential growth 
of the degrees of freedom, the computational cost and 
the memory requirements.

In this paper, we build upon our recent work on dynamical 
tensor approximation \cite{Dektor_2020,Dektor_dyn_approx},
and develop new rank-adaptive temporal integrators 
to compute the numerical solution of high-dimensional  
initial/boundary value problems of the form 
\begin{align}
\begin{cases}
\displaystyle\frac{\partial u(\bm x, t) }{\partial t} = G(u(\bm x, t)), 
\qquad  \vspace{0.1cm} \\
u(\bm x,0) = u_0(\bm x),
\end{cases}
\label{nonlinear-ibvp} 
\end{align}
where $\bm x\in \Omega \subseteq \mathbb{R}^d$  
($\Omega$ compact, $d\geq 1$), and $G$ is a nonlinear 
operator which may take into account boundary conditions. 
A well-known challenge of dynamic tensor 
approximations to \eqref{nonlinear-ibvp} is that 
the curvature of the tensor manifold in which we 
compute the PDE solution is inversely proportional 
to the energy of the tensor modes. This means that 
the smaller the energy of the tensor modes the higher 
the curvature. Hence, to integrate a solution 
characterized by tensor modes with 
a wide range of energies one has to consider
time stepping schemes that can effectively 
handle geometric features associated 
with the curvature of the manifold. 
In projection-based approaches \cite{Lubich_2007,Lubich_2008,Dektor_2020,Dektor_dyn_approx}
the computational challenge posed by the 
curvature of the tensor manifold translates into 
the need to invert the positive semi-definite 
covariance matrix of the tensor cores at each 
time step. 
A time-integration scheme constructed in this way 
may become numerically unstable in the presence of 
tensor modes with small energy, or even singular 
when modes with zero energy are 
present (e.g., at a time instant in which we increase
the tensor rank by adding a mode with zero energy).
To mitigate this problem, Babaee {\em et al.} \cite{robust_do/bo}
introduced a matrix pseudo-inverse approximation method 
that can handle potential singularities in the covariance 
matrices of the tensor cores, in particular when 
adding modes with zero energy to the tensor series 
expansion of the PDE solution.

A mathematically rigorous framework to integrate 
dynamical tensors over manifolds with arbitrary 
curvature was developed by Lubich {\em et. al} in 
\cite{Lubich_2015,Lubich_2016,Lubich_2018}. 
The key idea is to integrate the evolution 
equation generating the tensor dynamics 
using operator splitting schemes, e.g., the 
Lie-Trotter or the Strang time integrators (see 
\cite{Lubich_2016,Lubich_2018} 
for details). 
This results in a scheme that does not suffer from the 
curvature of the tensor manifold, and even provides an 
exact representation in the presence of tensor 
modes with zero energy. 
The numerical method presented in this work 
combines all these features, i.e., functional tensor 
train (FTT) series expansions, operator splitting time 
integration, and a new rank-adaptive algorithm to add 
and remove tensor modes from the PDE solution based 
on a thresholding criterion that limits the component 
of the  velocity vector normal to the FTT tensor manifold.

This paper is organized as follows. 
In section \ref{sec:tensor_manifolds} we briefly review 
finite-rank functional tensor train (FTT) expansions of 
high-dimensional functions. 
In section \ref{sec:approx_on_manifolds} we discuss 
dynamic tensor approximation of nonlinear PDEs of the form 
\eqref{nonlinear-ibvp} and develop robust temporal 
integration schemes based on operator splitting methods. 
We also discuss step-truncation algorithms 
\cite{rodgers2020step-truncation,rodgers2020stability} 
and prove that dynamic tensor approximation and 
step-truncation are at least order one consistent 
to one another. 
In section \ref{sec:adaptive_rank} we develop new 
rank-adaptive time integrators on rank-structured 
FTT tensor manifolds and prove that the resulting scheme 
is consistent.
In section \ref{sec:numerics} we present and discuss 
various numerical applications of the proposed 
rank-adaptive tensor method, and demonstrate its 
accuracy and computational efficiency. 
The main findings are summarized in section \ref{sec:summary}.

\section{The manifold of fixed-rank FTT tensors}
\label{sec:tensor_manifolds}
Let us consider the weighted Hilbert 
space\footnote{The approximation theory presented in 
section \ref{sec:tensor_manifolds} may be applied 
to more general Hilbert spaces, e.g., to the 
Sobolev space $W^{2,p}$ \cite{Dektor_2020}
and to tree-based tensor manifolds in Banach 
spaces \cite{Falco_2016}.} 
\begin{equation} 
\label{hilbert_space}
H = L^2_{\mu}(\Omega),
\end{equation} 
where 
$\Omega \subseteq \mathbb{R}^d$ is a separable domain 
such as a $d$-dimensional flat torus $\mathbb{T}^d$ or a 
Cartesian product of $d$ real intervals $\Omega_i=[a_i,b_i]$
\begin{equation}
\begin{aligned}
\Omega &= \bigtimes_{i=1}^d \Omega_i, 
\end{aligned}
\label{Omega}
\end{equation}
and $\mu$ is a finite product measure on 
$\Omega$
\begin{equation}
\mu(\bm x) = \prod_{i=1}^d \mu_i(x_i).
\end{equation}
Let $\tau$ be the counting measure 
on $\mathbb{N}$. 
Each element $u \in L^2_{\mu}(\Omega)$ admits a 
functional tensor train (FTT) expansion 
of the form 
\begin{equation}
\label{FTT}
u(\bm x) = {\sum_{\alpha_0 = 1}^{\infty} 
\sum_{\alpha_1 = 1}^{\infty} \cdots  \sum_{\alpha_{d}=1}^{\infty}}  \sqrt{\lambda(\alpha_{d-1})}
\psi_1(1;x_1;\alpha_1) \psi_2(\alpha_1;x_2;\alpha_2) \cdots 
\psi_d(\alpha_{d-1};x_d;1),
\end{equation}
where {$\{\psi_i(\alpha_{i-1};x_i;\alpha_i)\}_{\alpha_{i}}$ 
is an orthonormal basis 
for the space $L^2_{\tau \times \mu_i}(\mathbb{N} \times \Omega_i)$. 
It can be shown that $\psi_i(\alpha_{i-1};x_i;\alpha_i)$ 
are eigenfunctions for a self-adjoint compact operator and 
%
%
$ \lambda(1)\geq \lambda(2) \geq \ldots \geq 0$ 
is a sequence of real numbers converging to zero (see \cite{Bigoni_2016,Dektor_dyn_approx} for more details).}
By truncating \eqref{FTT} so that only the largest 
singular values are 
retained, we obtain the approximation of $u(\bm x)$
\begin{equation}
\label{FTT_finite}
{u_{\bm r}}(\bm x) = \sum_{\alpha_0 = 1}^{r_0} 
\sum_{\alpha_1 = 1}^{r_1} \cdots  \sum_{\alpha_{d}=1}^{r_{d}} 
\sqrt{\lambda(\alpha_{d-1})}
\psi_1(\alpha_0;x_1;\alpha_1) \psi_2(\alpha_1;x_2;\alpha_2) \cdots 
\psi_d(\alpha_{d-1};x_d;\alpha_d),
\end{equation}
where $\bm r = (r_0, r_1, \ldots, r_{d-1}, r_d)$ is the {FTT} rank. 
It is convenient to write \eqref{FTT_finite} in a more 
compact form as 
\begin{equation}
\label{FTT_core_rep}
    {u_{\bm r}}(\bm x) = \bm \Psi_1(x_1) \bm \Psi_2(x_2) \cdots \sqrt{\bm \Lambda} \bm \Psi_d(x_d),
\end{equation}
{
where $\bm\Psi_i(x_i)$ is a $r_{i-1} \times r_i$ matrix with entries 
$\left[\bm\Psi_i(x_i)\right]_{j k} = \psi_i(j;x_i;k)$} and $\bm \Lambda$ is a diagonal matrix with entries $\lambda(\alpha_{d-1})$ ($\alpha_{d-1}=1,\ldots,r_{d-1}$). 
The matrix-valued functions $\bm\Psi_i(x_i)$ will be referred 
to as FTT {\em cores}, and we denote by $M_{r_{i-1} \times r_i}(L_{\mu_i}^2(\Omega_i))$ the set of all $r_{i-1} \times r_i$ matrices with entries in 
$L_{\mu_{i-1}}^2(\Omega_i)$. 
To simplify notation even more, we will often 
suppress explicit tensor core dependence on the spatial variable $x_i$, allowing us 
to simply write $\bm\Psi_i=\bm\Psi_i(x_i)$ and $\psi_i(\alpha_{i-1},\alpha_i)=\psi_i(\alpha_{i-1};x_i;\alpha_i)$ as the spatial dependence is indicated 
by the tensor core subscript.

\subsection{{Orthogonalization and truncation of FTT tensors}}
\label{sec:orthogonalization}
For any tensor core $\bm \Psi_i \in M_{r_{i-1} \times r_i}
(L_{\mu_i}^2(\Omega_i))$ we define the matrix 
\begin{equation}
\langle \bm \Psi^{{\top}}_ i\bm \Psi_i \rangle_{i} \in M_{r_i \times r_i}(\mathbb{R}) 
\label{Cdef}
\end{equation}
with entries\footnote{The averaging operation in \eqref{Cdef1} can be viewed as a 
an inner product on the space $\displaystyle\bigtimes_{n=1}^{r_{i-1}} L_{\mu_i}^2(\Omega_i)$.}
{
\begin{align}
\left\langle \bm \Psi_i^{\top} \bm \Psi_i \right\rangle_i(j,k) = 
\sum_{p=1}^{r_{i-1}} \int_{\Omega_i} \psi_i(p;x_i;j) \psi_i(p;x_i;k) d \mu_i(x_i).
\label{Cdef1}
\end{align}}
The FTT representation \eqref{FTT_core_rep} is given in terms 
of FTT cores $\bm \Psi_i$ satisfying\footnote{{
Equation \eqref{ortho_cores} follows immediately from the orthonormality of $\{\psi_i(\alpha_{i-1};x_i;\alpha_i)\}_{\alpha_{i}}$ 
relative to the inner product in $L^2_{\tau \times \mu_i}(\mathbb{N} \times \Omega_i)$.} }
\begin{equation}
\label{ortho_cores}
\begin{aligned}
\left\langle \bm \Psi_i^{\top}\bm \Psi_i\right\rangle_i &= 
\bm I_{r_i \times r_i}, \qquad i = 1,\ldots, d-1, \\ 
\quad \left\langle \bm \Psi_d\bm \Psi_d^{\top} \right\rangle_d 
&= \bm I_{r_{d-1} \times r_{d-1}}.
\end{aligned}
\end{equation}
Other orthogonal representations can be computed, e.g., 
based on recursive QR decompositions.
To describe different orthogonalizations of FTT tensors, 
let $\bm \Psi_i \in M_{r_{i-1} \times r_i}
(L^2_{\mu_i}(\Omega_i))$ and consider each column 
of $\bm \Psi_i$ as a vector in 
$\displaystyle\bigtimes_{n=1}^{r_{i-1}} L_{\mu_i}^2(\Omega_i)$.
Performing an orthogonalization process (e.g. Gram-Schmidt) 
on the columns of the FTT core $\bm \Psi_i$ relative to the 
inner product \eqref{Cdef} yields a QR-type decomposition 
of the form
\begin{equation}
\label{functional_QR}
\bm \Psi_i = \bm Q_i \bm R_i,
\end{equation}
where $\bm Q_i$ is an $r_{i-1} \times r_i$ matrix with elements 
in $L^2_{\mu_i}(\Omega_i)$ satisfying 
$\left\langle \bm Q_i^{\top}\bm Q_i \right\rangle_i = 
\bm I_{r_{i} \times r_i}$, and $\bm R_i$ 
is an upper triangular $r_i \times r_i$ matrix with real entries.
Next consider an arbitrary FTT tensor 
${u_{\bm r}} = \bm \Psi_1 \bm \Psi_2 \cdots \bm \Psi_d$, 
{
where the matrix $\langle \bm \Psi_i^{\top} \bm \Psi_i \rangle_i$ may be singular}.
For notational convenience, we define the partial products 
\begin{equation}
    \bm \Psi_{\leq i} = \bm \Psi_1 \cdots \bm \Psi_i\qquad 
    \text{and}
     \qquad \bm \Psi_{> i} = 
    \bm \Psi_{i+1} \cdots \bm \Psi_d.
\end{equation}
One way to orthogonalize ${u_{\bm r}}$ is by performing QR 
decompositions recursively from left to right as we will 
now describe. Begin by decomposing $\bm \Psi_1$ as 
\begin{equation} 
\label{core1_left_orth}
\begin{aligned}
\bm \Psi_1 = \bm Q_1 \bm R_1, \qquad &
\bm Q_1 \in M_{r_0 \times r_1}(L^2_{\mu_1}(\Omega_1)), 
\quad \left\langle \bm Q_1^{\top}
\bm Q_1 \right\rangle_1 = 
\bm I_{r_1 \times r_1}, \\ 
&\bm R_1 \in M_{r_1 \times r_1}(\mathbb{R}) 
\text{ is upper triangular}.
\end{aligned}
\end{equation}
Now we may write ${u_{\bm r}} = \bm Q_1 \bm R_1 \bm \Psi_2 
\cdots \bm \Psi_d$.
Next, perform another QR decomposition 
\begin{equation} 
\label{core2_left_oth}
\begin{aligned}
\bm R_1 \bm \Psi_2 = \bm Q_2 \bm R_2, \qquad &
\bm Q_2 \in M_{r_1 \times r_2}(L^2_{\mu_2}(\Omega_2)), \quad 
\left\langle \bm Q_2^{\top} \bm Q_2 \right\rangle_{2} = 
\bm I_{r_2 \times r_2}, \\
&\bm R_2 \in M_{r_2 \times r_2}(\mathbb{R}) \text{ is upper triangular}.
\end{aligned} 
\end{equation}
Proceeding recursively in this way we obtain a representation 
for ${u_{\bm r}}$ of the 
form 
\begin{equation}
\label{left_orth}
{u_{\bm r}} = \bm Q_1 \cdots \bm Q_{d-1} \bm Q_d \bm R_d,
\end{equation} 
where each 
$\bm Q_i \in M_{r_{i-1} \times r_i}(L^2_{\mu_i}(\Omega_i))$ 
satisfies $\left\langle \bm Q_i^{\text{T}}
\bm Q_i\right\rangle_{i} = 
\bm I_{r_i \times r_i}$. We refer to such a representation as a 
left orthogonalization of ${u_{\bm r}}$. We may stop 
orthogonolizing at any step in the recursive process 
to obtain the partial left orthogonalization 
\begin{equation} 
\label{partial_left_orth}
{u_{\bm r}} = \bm Q_{\leq i} \bm R_i \bm \Psi_{>i}.
\end{equation}
Similar to orthogonalizing from the left, we may also orthogonalize ${u_{\bm r}}$ from 
the right. To do so, begin by performing a QR decomposition 
{
\begin{equation}
\label{cored_right_orth}
\begin{aligned}
\bm \Psi_d^{\top} = \bm K_d \bm W_d, \qquad &\bm K_d \in M_{r_d \times r_{d-1}}(L^2_{\mu_d}(\Omega_d)), \quad
\left\langle \bm K_d^{\top} \bm K_d \right\rangle_{d} = 
\bm I_{r_{d-1} \times r_{d-1}}, \\ 
&\bm W_d \in M_{r_{d-1} \times r_{d-1}}(\mathbb{R}) \text{ is upper triangular}. 
\end{aligned}
\end{equation}
A substitution of \eqref{cored_right_orth} into 
\eqref{FTT_core_rep} yields the expansion $u_{\bm r} = \bm \Psi_1 \cdots \bm \Psi_{d-1} \bm W_d^{\top} \bm K_d^{\top}$. 
Next perform a QR decomposition 
\begin{equation} 
\label{core_d-1_right_orth}
\begin{aligned} 
\bm W_d \bm \Psi_{d-1}^{\top} = \bm K_{d-1} \bm W_{d-1}, \qquad &\bm K_{d-1} \in M_{r_{d-1} \times r_{d-2}}(L^2_{\mu_{d-1}}(\Omega_{d-1})), \quad \left\langle \bm K_{d-1}^{\top}\bm K_{d-1} \right\rangle_{d-1} = \bm I_{r_{d-2} \times r_{d-2}}, \\
&\bm W_{d-1} \in M_{r_{d-2} \times r_{d-2}}(\mathbb{R}) \text{ is upper triangular}.
\end{aligned}
\end{equation}
Proceeding recusively in this way we obtain the right orthogonalization 
\begin{equation} 
\label{right_orth}
u_{\bm r} = \bm W_1^{\top} \bm K_1^{\top} \cdots \bm K_d^{\top}. 
\end{equation}
We may have stopped the orthogonalization process 
at any point to obtain the partial right orthogonalization 
\begin{equation}
\label{partial_right_orth}
u_{\bm r} = \bm \Psi_{\leq i} \bm W_{i+1}^{\top} \bm K_{> i}^{\top}.
\end{equation}
It is also useful to orthogonalize from the left and 
right to obtain expansions of the form 
\begin{equation} 
\label{left_right_orth}
u_{\bm r} = \bm Q_{\leq i} \bm R_i \bm W_{i+1}^{\top} 
\bm K_{> i}^{\top},
\end{equation}
where the rank of the matrix $\bm R_i \bm W_{i+1}^{\top}$ 
is the $i$-th component of the true FTT rank of the tensor 
${u_{\bm r}}$.}

Another important operation is truncation of FTT tensors 
to smaller rank. Efficient algorithms to perform this operation 
for TT tensors can be found in \cite[section 3]{OseledetsTT}
and in \cite{AlDaas2020}. Such algorithms 
are easily adapted to FTT tensors by replacing QR 
decompositions of matrices with the QR of FTT cores 
given in \eqref{functional_QR} and SVD decomposition of 
matrices with Schmidt decompositions. In numerical 
implementations, this adaptation amounts to introducing appropriate 
quadrature weight matrices into the algorithms.

\subsection{Tangent and normal spaces of fixed-rank FTT manifolds}
Let us denote by $V_{r_{i-1} \times r_i}^{(i)}$ the set of all 
tensor cores $\bm \Psi_i \in M_{r_{i-1} \times r_{i}}(L^2_{\mu_i}(\Omega_i))$ 
with the property that the autocovariance matrices 
$\left\langle \bm \Psi_i^{\top}\bm \Psi_i \right\rangle_{i} 
\in M_{r_i \times r_i}(\mathbb{R})$ and 
$\left\langle \bm \Psi_i\bm \Psi_i^{\top} \right\rangle_{i}
\in M_{r_{i-1} \times r_{i-1}}(\mathbb{R})$ are invertible for 
$i =1,\ldots,d$.
The set 
\begin{equation}
\label{FTT_manifold}
    \mathcal{M}_{\bm r} = \{ {u_{\bm r}} \in L^2_{\mu}(\Omega):\quad {u_{\bm r}} 
= \bm \Psi_1 \bm \Psi_2 \cdots \bm \Psi_d , \quad 
\bm \Psi_i \in V_{r_{i-1} \times r_i}^{(i)}, \quad \forall i = 1,2,\ldots,d
\},
\end{equation}
consisting of fixed-rank FTT tensors, 
is a smooth Hilbert submanifold of $L^2_{\mu}(\Omega)$ 
(see \cite{Dektor_dyn_approx}).
We represent elements in the tangent space, ${T_{u_{\bm r}} \mathcal{M}_{\bm r}}$, 
of $\mathcal{M}_{\bm r}$ at 
the point ${u_{\bm r}} \in \mathcal{M}_{\bm r}$ as equivalence classes 
of velocities of continuously differentiable curves on $\mathcal{M}_{\bm r}$ 
passing through ${u_{\bm r}}$
\begin{figure}
\centerline{\includegraphics[height=6cm]{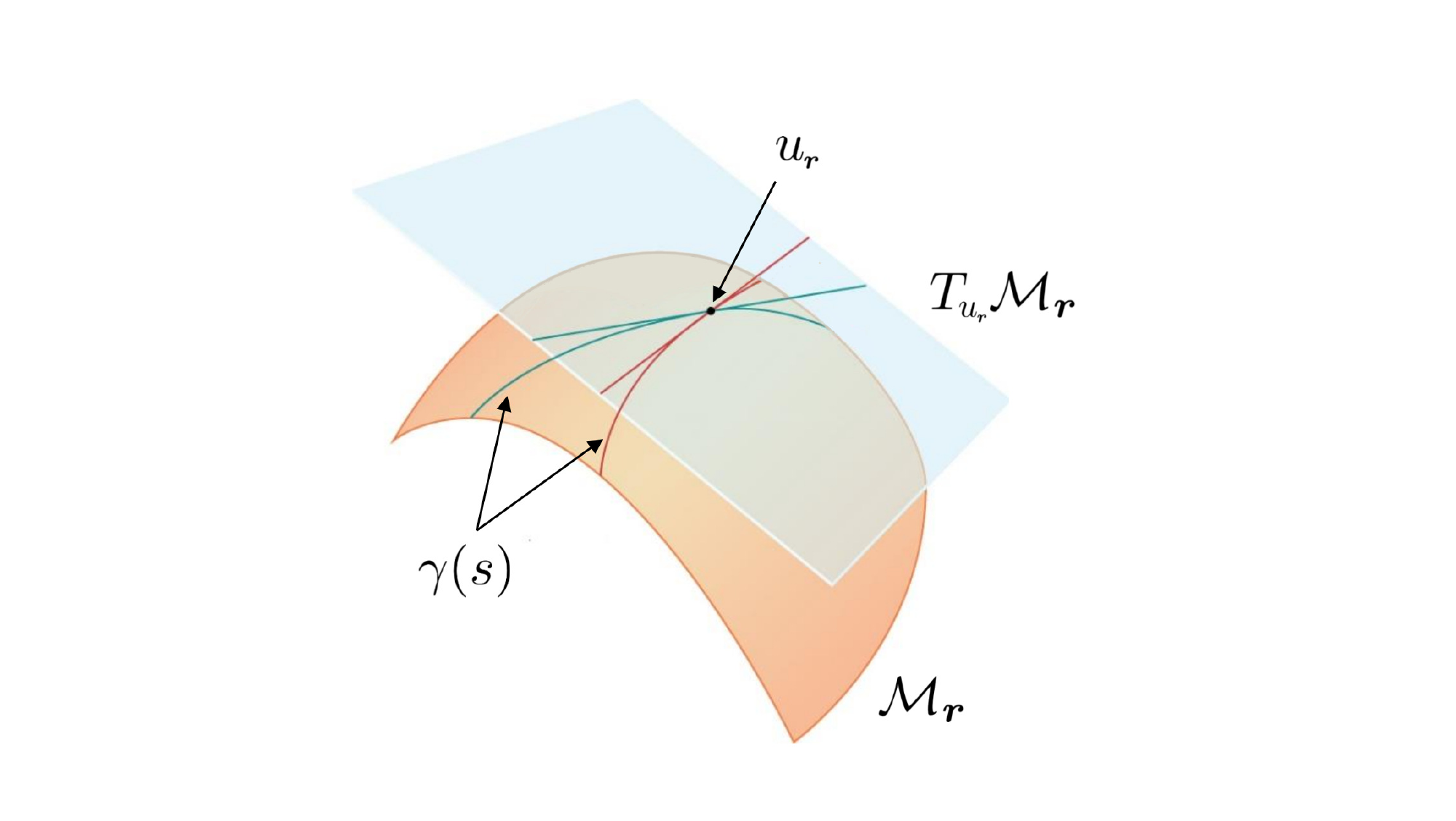}}
\caption{Sketch of the tensor manifold $\mathcal{M}_{\bm r}$ and
the tangent space $T_{{u_{\bm r}}}\mathcal{M}_{\bm r}$ at 
${u_{\bm r}} \in\mathcal{M}_{\bm r}$. The tangent space is defined 
as equivalence classes of velocities of continuously differentiable 
curves $\gamma(s)$ on $\mathcal{M}_{\bm r}$ 
passing through ${u_{\bm r}}$.}
\label{fig:tensor_manifold}
\end{figure}
\begin{equation}
\label{tangent_space}
T_{{u_{\bm r}}}\mathcal{M}_{\bm r} = \left\{\gamma'(s)\vert_{s=0}: 
\quad \gamma \in \mathcal{C}^1\left( (-\delta,\delta) , 
\mathcal{M}_{\bm r} \right), \quad  
\gamma(0) = {u_{\bm r}} \right\}.
\end{equation}
A sketch of $\mathcal{M}_{\bm r}$ and $T_{{u_{\bm r}}}\mathcal{M}_{\bm r}$ 
is provided in Figure \ref{fig:tensor_manifold}.
Since $L^2_{\mu}(\Omega)$ is an inner product space, for each 
$u \in L^2_{\mu}(\Omega)$ the tangent space 
$T_u L^2_{\mu}(\Omega)$ is canonically isomorphic 
to $L^2_{\mu}(\Omega)$. Moreover, for 
each ${u_{\bm r}} \in \mathcal{M}_{\bm r}$ the normal space 
to $\mathcal{M}_{\bm r}$ at the point ${u_{\bm r}}$, denoted by 
${N}_{{u_{\bm r}}} \mathcal{M}_{\bm r}$, 
consists of all vectors in $L^2_{\mu}(\Omega)$ that are 
orthogonal to $T_{{u_{\bm r}}} \mathcal{M}_{\bm r}$ 
with respect to the inner product in $L^2_{\mu}(\Omega)$
\begin{equation}
\label{normal_space}
N_{{u_{\bm r}}} \mathcal{M}_{\bm r} = \{w \in L^2_{\mu}(\Omega) : 
\left\langle w, v \right\rangle_{L^2_{\mu}(\Omega)} = 0, 
\quad \forall v \in T_{{u_{\bm r}}} \mathcal{M}_{\bm r}  \}.
\end{equation}
Since the tangent space $T_{{u_{\bm r}}} \mathcal{M}_{\bm r}$ 
is closed, for each point 
${u_{\bm r}} \in \mathcal{M}_{\bm r}$ the space 
$L^2_{\mu}(\Omega)$ admits a decomposition
into tangential and normal components
\begin{equation}
\label{tangent_normal_partition}
L^2_{\mu}(\Omega) = {T_{{u_{\bm r}}} \mathcal{M}_{\bm r}} 
\oplus N_{{u_{\bm r}}} \mathcal{M}_{\bm r}.
\end{equation}

\section{Dynamic tensor approximation of nonlinear PDEs}
\label{sec:approx_on_manifolds}

The idea of dynamic tensor approximation is to project the 
time derivative of a low-rank tensor onto the tangent 
space of the corresponding low-rank tensor manifold 
at each time. Such a projection results in evolution 
equations on the low-rank tensor manifold, 
and can be used to solve initial/boundary value problem 
of the form \eqref{nonlinear-ibvp}.
This approximation technique is known in the 
quantum physics community as 
Dirac-Frenkel/Mclachlan variational 
principle \cite{Raab,Frenkel,McLachlan}. 
Dynamic approximation has been 
recently studied by Lubich {\em et al.}
\cite{Lubich_2007,Lubich_2008,Lubich_2010,Lubich_2015} 
for finite-dimensional rank-structured manifolds 
embedded in Euclidean spaces.
There have also been extensions to the Tucker format 
on tensor Banach spaces \cite{Falco_2019} and 
tree-based tensor formats on 
tensor Banach spaces \cite{Falco_2020}.

\subsection{Dynamic tensor approximation on low-rank FTT manifolds}
\label{sec:dynamic_approximation} 

Let us briefly describe the method of dynamic tensor approximation 
for the low-rank FTT manifold \eqref{FTT_manifold}. 
First we define a projection onto 
the tangent space of $\mathcal{M}_{\bm r}$ at ${u_{\bm r}}$ by 
\begin{equation}
\label{tangent_projector}
\begin{aligned}
P_{{u_{\bm r}}} : {L^2_{\mu}(\Omega)} &\to T_{{u_{\bm r}}}\mathcal{M}_{\bm r} \\
P_{{u_{\bm r}}} v &= \argmin_{v_{\bm r} \in T_{{u_{\bm r}}} \mathcal{M}_{\bm r}} \|v - v_{\bm r} \|_{{L^2_{\mu}(\Omega)}}.
\end{aligned}
\end{equation}
For fixed ${u_{\bm r}}$, the map $P_{{u_{\bm r}}}$ is linear and bounded.
Each $v \in {L^2_{\mu}(\Omega)}$ admits a unique representation as $v = v_t + v_n$ 
where $v_t \in T_{{u_{\bm r}}} \mathcal{M}_{\bm r}$ and 
$v_n \in N_{{u_{\bm r}}} \mathcal{M}_{\bm r}$ (see equation \eqref{tangent_normal_partition}). 
From this representation it is clear that 
$P_{{u_{\bm r}}}$ is an 
orthogonal projection onto the tangent space 
$T_{{u_{\bm r}}} \mathcal{M}_{\bm r}$. 
If the initial condition ${u_0(\bm x)}$ 
is on the manifold $\mathcal{M}_{\bm r}$, then 
the solution to the initial/boundary value problem 
\begin{align}
\begin{cases}
\displaystyle\frac{\partial 
{u_{\bm r}} }{\partial t} = 
P_{{u_{\bm r}}} 
G({u_{\bm r}}), \vspace{0.1cm} \\
u(\bm x,0) = u_0(\bm x),
\end{cases}
\label{ibvp_dyn_approx} 
\end{align}
remains on the manifold $\mathcal{M}_{\bm r}$ 
for all $t \geq 0$. {Here $G$ is the 
nonlinear operator on the right hand side of 
equation \eqref{nonlinear-ibvp}}. The solution to 
\eqref{ibvp_dyn_approx} is known as a dynamic 
approximation to the 
solution of \eqref{nonlinear-ibvp}. 
In the context of separable Hilbert spaces, 
the dynamic approximation problem 
\eqref{ibvp_dyn_approx} can be solved using dynamically 
orthogonal or bi-orthogonal constraints on tensor modes 
\cite{Dektor_2020,Dektor_dyn_approx}. Such constraints, 
also referred to as gauge conditions, provide the 
unique solution of the minimization 
problem \eqref{tangent_projector} with 
different FTT cores. 
However, in the presence of repeated eigenvalues 
the bi-orthogonal constraints result in singular equations 
for the tangent space projection \eqref{tangent_projector}. 
Hereafter we recall the equations which allow us to 
compute  \eqref{tangent_projector} with FTT cores subject 
to dynamically orthogonal (DO) constraints. 

First, expand ${u_{\bm r}} \in \mathcal{M}_{\bm r}$ in terms of 
FTT cores ${u_{\bm r}} = \bm \Psi_1 \bm \Psi_2 \cdots \bm \Psi_d$, where 
$\bm \Psi_i$ are orthogonalized from the left, i.e., 
$\left\langle \bm \Psi_i^{\top} \bm \Psi_i \right\rangle_i 
= \bm I_{r_i \times r_i}$, for all $i = 1,\ldots,d-1$. 
With this ansatz, an arbitrary element of the tangent 
space $T_{{u_{\bm r}}}\mathcal{M}_{\bm r}$ can be expressed as 
\begin{equation}
\label{tangent_vector}
{\dot{u}_{\bm r}} = \dot{\bm \Psi}_1 \bm \Psi_{\geq 2} + \cdots + 
\bm \Psi_{\leq i-1} \dot{\bm \Psi}_i \bm \Psi_{\geq i+1} + 
\cdots + \bm \Psi_{\leq d-1} \dot{\bm \Psi}_d , 
\end{equation}
where ${\dot{u}_{\bm r}}=\partial {u_{\bm r}}/\partial t$ and 
$\dot{\bm \Psi}_i=\partial \bm \Psi_i/\partial t$. 
The DO constraints are given by 
\begin{equation}
\label{DO_constraints_multilevel}
\left\langle \dot{\bm \Psi}_i^{\top} \bm \Psi_i \right\rangle_i = 
\bm 0_{r_i \times r_i}, \qquad i = 1,\ldots,d-1,
\end{equation} 
which ensures that $\left\langle \bm \Psi_i^{\top}(t) \bm \Psi_i(t) \right\rangle_i = \bm I_{r_i \times r_i}$ for all $i = 1,\ldots,d-1$ 
and for all $t \geq 0$.
We have shown in \cite{Dektor_dyn_approx} that under 
these constraints, the convex minimization problem 
\eqref{tangent_projector} admits a unique minimum for 
vectors in the tangent space \eqref{tangent_vector} 
satisfying the PDE system
\begin{equation}
\label{DO-FTT_system}
\begin{aligned}
\dot{\bm \Psi}_1 &= \left[ \left\langle G({u_{\bm r}}) \bm 
\Psi_{\geq 2}^{\top} \right\rangle_{\geq 2} 
- \bm \Psi_1 \left\langle \bm \Psi_1^{\top} G({u_{\bm r}}) 
\bm \Psi_{\geq 2}^{\top} \right\rangle_{\geq 1} \right] 
\left\langle \bm \Psi_{\geq 2} \bm \Psi_{\geq 2}^{\top} 
\right\rangle_{ \geq 2}^{-1}, \\
\dot{\bm \Psi}_k &= \left[ \left\langle 
\bm \Psi_{\leq k-1}^{\top} G({u_{\bm r}})  
\bm \Psi_{\geq k+1}^{\top} 
\right\rangle_{\leq k-1,\geq k+1} - \right.\\
&\quad\quad \left. \bm \Psi_k \left\langle 
\bm \Psi_{\leq k}^{\top} G({u_{\bm r}})  
\bm \Psi_{\geq k+1}^{\top} \right\rangle_{\geq 1} \right] 
\left\langle \bm \Psi_{\geq k+1} \bm \Psi_{\geq k+1}^{\top} 
\right\rangle_{\geq k+1}^{-1}, \quad \quad  
k = 2,3,\ldots,d-1, \\
\dot{\bm \Psi}_d &= \left\langle 
\bm \Psi_{\leq d-1}^{\top} G({u_{\bm r}}) \right\rangle_{\leq d-1}.
\end{aligned}
\end{equation}
Here, ${u_{\bm r}}=\bm \Psi_1 \bm \Psi_2 \cdots \bm \Psi_d\in \mathcal{M}_r$
{ and we have 
introduced the notation 
\begin{equation}
\begin{aligned}
\langle \bm \Psi \rangle_{\leq k} &= \int_{\Omega_1 \times \cdots \times \Omega_k} \bm \Psi(\bm x) d \mu_1(x_1) \cdots \mu_k(x_k) , \\
\langle \bm \Psi \rangle_{\geq k} &= \int_{\Omega_k \times \cdots \times \Omega_d} \bm \Psi(\bm x) d \mu_k(x_k) \cdots \mu_d(x_d) , \\
\langle \bm \Psi \rangle_{\leq k-1, \geq k+1} &= \int_{\Omega_1 \times \cdots \times \Omega_{k-1} \times \Omega_{k+1} \times \cdots \times \Omega_d } 
\bm \Psi(\bm x) d \mu_1(x_1) \cdots \mu_{k-1}(x_{k-1}) \mu_{k+1}(x_{k+1}) \cdots \mu_d(x_d) ,
\end{aligned}
\end{equation}
for any matrix $\bm \Psi(\bm x) \in M_{r \times s}\left(L^2_{\mu}\left(\Omega\right)\right)$.
}
The DO-FTT system \eqref{DO-FTT_system} 
involves several inverse covariance matrices 
$\left\langle \bm \Psi_{\geq k} \bm \Psi_{\geq k}^{\top} 
\right\rangle_{\geq k}^{-1}$, which can become poorly 
conditioned in the presence of tensor modes 
with small energy (i.e. autocovariance matrices with small singular values). This phenomenon 
has been shown to be a result of the fact that the curvature of the 
tensor manifold at a tensor is inversely proportional to the smallest 
singular value present in the tensor 
\cite[section 4]{Lubich_2007}.
%
A slight improvement to the numerical stability 
of \eqref{DO-FTT_system} can be obtained by right 
orthogonalizing the partial products 
\begin{equation}
\label{right_orthogonalization_DO_FTT}
\bm \Psi_{\geq k} = \bm R_{k}^{\top} \bm Q_{\geq k}^{\top} , 
\qquad k = 2,\ldots, d.
\end{equation}
Using the orthogonality of $\bm Q_k$ it can easily be verified 
that $\bm R_k = \left\langle \bm \Psi_{\geq k} \bm \Psi_{\geq k}^{\top} \right\rangle_{\geq k}^{1/2}$.
With these right orthogonalized cores, 
the DO-FTT system \eqref{DO-FTT_system} can be written as 
\begin{equation}
\label{DO-FTT_system2}
\begin{aligned}
\dot{\bm \Psi}_1 &= \left[ \left\langle G({u_{\bm r}}) 
\bm Q_{\geq 2} \right\rangle_{\geq 2} 
- \bm \Psi_1 \left\langle \bm \Psi_1^{\top} G({u_{\bm r}}) 
\bm Q_{\geq 2} \right\rangle_{\geq 1} \right] \left\langle 
\bm \Psi_{\geq 2} \bm \Psi_{\geq 2}^{\top} 
\right\rangle_{\geq 2}^{-1/2}, \\
\dot{\bm \Psi}_k &= \left[ \left\langle 
\bm \Psi_{\leq k-1}^{\top} G({u_{\bm r}})  
\bm Q_{\geq k+1} \right\rangle_{\leq k-1,\geq k+1} - \right.\\
&\quad\quad \left. \bm \Psi_k \left\langle  \bm \Psi_{\leq k}^{\top} 
G({u_{\bm r}}) \bm Q_{\geq k+1} \right\rangle_{\geq 1} \right] 
\left\langle \bm \Psi_{\geq k+1} 
\bm \Psi_{\geq k+1}^{\top} \right\rangle_{\geq k+1}^{-1/2}, 
\quad \quad  k = 2,3,\ldots,d-1, \\
\dot{\bm \Psi}_d &= \left\langle \bm \Psi_{\leq d-1}^{\top} 
G({u_{\bm r}}) \right\rangle_{\leq d-1},
\end{aligned}
\end{equation}
where $\left\langle \bm \Psi_{\geq k} \bm \Psi_{\geq k}^{\top} 
\right\rangle_{k,\ldots,d}^{-1/2}$ denotes the inverse of the 
matrix square root. Since the condition number of  
$\left\langle \bm \Psi_{\geq k} \bm \Psi_{\geq k}^{\top} 
\right\rangle_{\geq k}$ is larger than the 
condition number of $\left\langle \bm \Psi_{\geq k} \bm \Psi_{\geq k}^{\top} \right\rangle_{\geq k}^{1/2}$, we have that 
the inverse covariances at the right hand side of 
\eqref{DO-FTT_system2} can be computed more 
accurately than the ones in \eqref{DO-FTT_system} 
in the presence of small singular values.

\subsubsection{Temporal integration using operator splitting methods}
\label{sec:splitting_integrator}
As we mentioned previously, 
one of the challenges of dynamic approximation of PDEs on 
low-rank tensor manifolds relates to the curvature of the manifold, 
which is proportional to the inverse of the smallest singular value 
of $\left\langle \bm \Psi_{\geq k} \bm \Psi_{\geq k}^{\top} 
\right\rangle_{\geq k}$ \cite[section 4]{Lubich_2007}. 
Such curvature appears naturally at the right hand side of the 
DO-FTT system \eqref{DO-FTT_system} in the form of 
inverse covariances  $\left\langle \bm \Psi_{\geq k} 
\bm \Psi_{\geq k}^{\top} \right\rangle^{-1}_{\geq k}$.
Clearly, if the tensor solution is comprised of 
cores with small singular values, then the covariance matrices 
$\left\langle \bm \Psi_{\geq k} \bm \Psi_{\geq k}^{\top} 
\right\rangle_{\geq k}$ are ill-conditioned and 
therefore not easily invertible. Moreover, it is 
desirable to add and remove tensor modes adaptively 
during temporal integration, and adding a mode 
with zero energy immediately yields singular 
covariance matrices (see \cite{Dektor_2020}). 
The problem of inverting the covariance matrices 
$\left\langle \bm \Psi_{\geq k} \bm \Psi_{\geq k}^{\top} 
\right\rangle_{\geq k}$ when integrating \eqref{DO-FTT_system} 
or \eqref{DO-FTT_system2} can be avoided by using projector-splitting
methods. These methods were originally proposed for 
integration on tensor manifolds by 
Lubich {\em et. al} in \cite{Lubich_2015,Lubich_2016,Lubich_2018}. 
The key idea is to apply an exponential operator 
splitting scheme, e.g., the Lie-Trotter scheme, directly to the 
projection operator onto the tangent space defining the 
dynamic approximation (see equation \eqref{tangent_projector}). 
To describe the method, we begin by introducing a general 
framework for operator splitting of dynamics on the FTT 
tangent space. We first rewrite the right hand side of \eqref{ibvp_dyn_approx} as 
\begin{equation}
\label{projected_RHS}
\begin{aligned}
    P_{{u_{\bm r}}} G({u_{\bm r}}) &= \dot{\bm \Psi}_1 \bm \Psi_{\geq 2} + 
   \bm \Psi_1 \dot{\bm \Psi}_2 \bm \Psi_{\geq 3} + 
   \cdots + \bm \Psi_{\leq d-1} \dot{\bm \Psi}_d \\
   &= \dot{\bm \Psi}_1 \left\langle \bm \Psi_{\geq 2} \bm \Psi_{\geq 2}^{\top} \right\rangle_{\geq 2}^{1/2} \bm Q_{\geq 2}^{\top} + 
   \bm \Psi_1 \dot{\bm \Psi}_2 \left\langle \bm \Psi_{\geq 3} \bm \Psi_{\geq 3}^{\top} \right\rangle_{\geq 3}^{1/2} \bm Q_{\geq 3}^{\top} + 
   \cdots + \bm \Psi_{\leq d-1} \dot{\bm \Psi}_d,
\end{aligned}
\end{equation}
where in the second line we used the right orthogonalizations 
in equation \eqref{right_orthogonalization_DO_FTT}.
A substitution of the expressions for $\dot{\bm \Psi}_k$ 
we obtained in \eqref{DO-FTT_system2} into 
\eqref{projected_RHS} yields 
\begin{equation}
\label{proj_op_for_splitting}
    P_{{u_{\bm r}}} G({u_{\bm r}}) = P_d^+ G({u_{\bm r}}) + \sum_{i=1}^{d-1} P_i^+ G({u_{\bm r}})  - P_i^- G({u_{\bm r}}) ,
\end{equation}
where we defined the following projection operators 
from $L^2_{\mu}(\Omega)$ onto $T_{{u_{\bm r}}} \mathcal{M}_{\bm r}$
\begin{equation}
\label{projections}
    \begin{aligned}
    P_k^{+} z(\bm x) &= \bm \Psi_{ \leq k-1} \left\langle \bm \Psi_{ \leq k-1}^{\top} z(\bm x)  \bm Q_{\geq k+1} \right\rangle_{\leq k-1, \geq k+1} \bm Q_{\geq k+1}^{\top} , \qquad k = 1,\ldots, d, \\
    P_k^{-} z(\bm x) &= \bm \Psi_{\leq k} \left\langle \bm \Psi_{\leq k}^{\top} z(\bm x) \bm Q_{\geq k+1} \right\rangle_{\geq 1} \bm Q_{\geq k+1}^{\top} , \qquad k = 1,\ldots, d-1,
    \end{aligned}
\end{equation}
{for any $z(\bm x) \in L^2_{\mu}(\Omega)$}. Also we set $\bm \Psi_0 = 1$.
The key point in \eqref{proj_op_for_splitting} is that inverse 
covariance matrices no longer appear.
To establish a general operator splitting framework, 
let us assume that there exists an evolution operator 
$\mathcal{E}_{P_{{u_{\bm r}}} G}$ for the solution of the initial/boundary 
value problem \eqref{ibvp_dyn_approx}, where $P_{{u_{\bm r}}} G$ 
is given in \eqref{proj_op_for_splitting}. Such an evolution 
operator $\mathcal{E}_{P_{{u_{\bm r}}} G}: L^2_{\mu}(\Omega) \times 
[0,T] \to L^2_{\mu}(\Omega)$ satisfies a 
semi-group property and it maps the 
initial condition $u_0(\bm x)$ into the solution 
to \eqref{ibvp_dyn_approx} at a later time
\begin{equation} 
u(\bm x,t) = \mathcal{E}_{P_{{u_{\bm r}}} G}(u_0(\bm x),t).
\end{equation}
We write such an evolution operator formally as an 
exponential operator with generator $D_{P_{{u_{\bm r}}} G}$ 
(see e.g. \cite{Koch_2013})
\begin{equation}
u(\bm x,t) = e^{t D_{P_{{u_{\bm r}}} G}} u_0(\bm x), \qquad 0 \leq t \leq T,
\end{equation}
where $D_{P_{{u_{\bm r}}} G}$ is the Lie derivative associated with $P_{{u_{\bm r}}} G$.
We now discretize the temporal domain of interest  
$[0, T]$ into $N+1$ evenly-spaced time instants,
\begin{equation}
\label{discrete_time}
t_i = i \Delta t, \qquad 
\Delta t = \frac{T}{N}, \qquad i=0,1,\ldots,N.
\end{equation}
An approximation to the exact solution 
of \eqref{ibvp_dyn_approx} is then 
obtained by the recurrence relation 
{
\begin{equation}
    u_{\bm r}(\bm x,t_{n+1}) \approx \mathcal{S}(\Delta t, u_{\bm r}(\bm x,t_{n})),
\end{equation}}
where $\mathcal{S}$ is an exponential operator splitting 
that approximates the exact evolution operator 
\begin{equation}
\label{general_splitting_operator}
    \mathcal{S}(t,\cdot) = \prod_{i=1}^s \left[ e^{ \gamma_{i,d}t P_d^+ G } \prod_{j=1}^{d-1} \left( e^{\gamma_{i,j} t P_j^+ G } e^{\gamma_{i,j} t P_j^- G } \right) \right].
\end{equation}
Setting $s = 1$ and $\gamma_{1,j} = 1$ for all $ j = 1,\ldots,d$ in 
\eqref{general_splitting_operator} yields the well-known 
Lie-Trotter splitting, which is first-order in time.
{
The discrete time version of this scheme can be written as
\begin{equation}
\label{split_diff_eqs}
\left\{
    \begin{aligned}
    u_1^+(t_{i+1}) &= u_1^+(t_i) + \Delta t P_1^+ G(u_{\bm r}),  \qquad u_1^+(t_i) = u_{\bm r}(t_i), \\
    u_1^-(t_{i+1}) &= u_1^-(t_{i}) - \Delta t P_1^-G(u_{\bm r}), \qquad u_1^-(t_i) = u_1^+(t_{i+1}), \\
    \vdots \\
    u_j^+(t_{i+1}) &= u_j^+(t_{i}) + \Delta t P_j^+ G(u_{\bm r}),  \qquad u_j^+(t_i) = u_{j-1}^-(t_{i+1}), \\
    u_j^-(t_{i+1}) &= u_j^-(t_{i}) - \Delta t P_j^- G(u_{\bm r}), \qquad u_j^-(t_i) = u_j^+(t_{i+1}), \\
    &\vdots \\
    u_d^+(t_{i+1}) &= u_d^+(t_{i}) + \Delta t P_d^+ G(u_{\bm r}), \qquad u_d(t_i) = u_{d-1}^-(t_{i+1}), \\
    u_{\bm r}(t_{i+1}) &= u_d^+(t_{i+1}).
    \end{aligned}\right.
\end{equation}
This allows us to compute $u_{\bm r}(t_{i+1})$ given $u_{\bm r}(t_i)$. 
Although each equation in \eqref{split_diff_eqs} involves a FTT tensor, it was shown in 
\cite[Theorem 4.1]{Lubich_2015} that each equation only updates one tensor core. Clearly this is 
computationally more efficient than updating a full tensor. 
Moreover, in \eqref{split_diff_eqs} there is no need to invert covariance matrices, 
which is a distinct advantage over iterating a discrete form of \eqref{DO-FTT_system} or \eqref{DO-FTT_system2}. }

{
Regarding computational cost, suppose we discretize the $d$-dimensional domain $\Omega$ using a 
tensor product grid with $n$ points per dimension. 
It was pointed out in \cite{Lubich_2015} that the computational complexity of the sweeping algorithm to 
update the tensor cores for the Lie-Trotter scheme \eqref{split_diff_eqs} applied to a linear PDE 
(i.e. equation \eqref{ibvp_dyn_approx} with linear $G$) is linear in the dimension $d$ 
but has high polynomial complexity in the tensor rank. 
On the other hand, discretizing such linear PDE on the same tensor product grid and performing one time step 
with a first-order time stepping scheme (e.g. Euler forward) has computational complexity which scales exponentially with the dimension $d$. 
Specifically, assuming that the operator $G$ in \eqref{nonlinear-ibvp} is linear with rank $r_{G}$ (see \cite{Beylkin}), the computational 
cost of one time step of Euler forward is $d n^{d+1} r_{G} + n^d r_{G}$ floating point operations, hence exponential in $d$.
}

\subsection{Step-truncation temporal integration methods}
\label{sec:step_truncation}
Another methodology to integrate nonlinear PDEs  
on fixed-rank tensor manifolds $\mathcal{M}_{\bm r}$ 
is step-truncation \cite{Vandereycken_2019,
rodgers2020step-truncation, rodgers2020stability}. 
The idea is to integrate the solution off of $\mathcal{M}_{\bm r}$ for 
short time, e.g., by performing one time step of the full equation 
with a conventional time-stepping scheme, followed by a 
truncation operation back onto $\mathcal{M}_{\bm r}$.
To describe this method further let us define the truncation operator 
\begin{equation}
\begin{aligned}
\mathfrak{T}_{\bm r} : {L^2_{\mu}(\Omega)} &\to \mathcal{M}_{\bm r} \\
\mathfrak{T}_{\bm r}(u) &= \argmin_{u_{\bm r} \in \mathcal{M}_{\bm r}} \| u - u_{\bm r} \|_{{L^2_{\mu}(\Omega)}}, 
\end{aligned}
\end{equation}
which provides the best approximation of $u$ on $\mathcal{M}_{\bm r}$. 
Such a map is known as a metric projection or closest point function 
and in general it may be multivalued, i.e., the set of $u_{\bm r} \in \mathcal{M}_{\bm r}$ 
which minimize $\| u - u_{\bm r} \|_{{L^2_{\mu}(\Omega)}}$ is not a singleton set. 
However, since $\mathcal{M}_{\bm r}$ 
is a smooth submanifold of ${L^2_{\mu}(\Omega)}$, we have by \cite[Proposition 5.1]{Salas_2019} 
that for each $u_0 \in \mathcal{M}_{\bm r}$ there exists an open neighborhood $U$ of 
$u_0$ such that $\mathfrak{T}_{\bm r}$ is well-defined and 
smooth on $U$.
Let 
\begin{equation}
\label{one_step_scheme}
    u(\bm x,t_{k+1}) = u(\bm x,t_{k}) + \Delta t \Phi\left(G,u(\bm x,t_k), \Delta t\right)
\end{equation} 
be a convergent one-step time integration 
scheme\footnote{Time stepping schemes of the form 
\eqref{one_step_scheme} include Runge-Kutta methods and linear 
multi-step methods \cite{rodgers2020step-truncation}.} approximating the solution to the initial 
value problem \eqref{nonlinear-ibvp}. 
Assume that the solution $u(\bm x,t_0)$ at time $t_0$ 
is on $\mathcal{M}_{\bm r}$.\footnote{ 
If $u(\bm x,t_0)$ is not on $\mathcal{M}_{\bm r}$ then it may be mapped onto $\mathcal{M}_{\bm r}$ 
by evaluating $\mathfrak{T}_{\bm r}(u(\bm x,t_0))$.} In order to guarantee the solution 
$u(\bm x,t_k)$ at time step $t_k$ is an element of the manifold $\mathcal{M}_{\bm r}$ 
for each $k = 1,2,\ldots$, we apply the truncation 
operator to the right hand 
side \eqref{one_step_scheme}. 
This yields the following step-truncation method 
\begin{equation}
\label{step_truncation_scheme}
u_{\bm r}(\bm x,t_{k+1}) = \mathfrak{T}_{\bm r} 
\left( u_{\bm r}(\bm x,t_k) + 
\Delta t \Phi\left(G,u_{\bm r}(\bm x,t_k), \Delta t\right) \right).
\end{equation}

\subsection{Consistency of dynamic approximation and step-truncation methods}

Next we ask what happens in the step-truncation 
algorithm in the limit of time step 
$\Delta t$ approaching zero. The result of such 
a limiting procedure results in a scheme 
which keeps the solution $u(\bm x,t)$ 
on the manifold $\mathcal{M}_{\bm r}$ for all time 
$t \geq t_0$ in an optimal way. 
We now show that this limiting procedure in fact 
results in precisely the dynamic approximation method 
described in section \ref{sec:dynamic_approximation}.
In other words, by sending $\Delta t$ to zero in \eqref{step_truncation_scheme} we obtain a 
solution of \eqref{ibvp_dyn_approx}. For similar 
discussions connecting these two approximation 
methods in closely related contexts see 
\cite{Lermusiaux_2018,Lermusiaux_2019,Vandereycken_2019}.
To prove consistency between step-truncation 
and dynamic approximation methods we need to compute 
$\mathfrak{T}_{\bm r}(u(\bm x,t))$ for $t$ infinitesimally 
close to $t_0$. Such a quantity depends on the derivative  
\begin{equation}
    \left.\frac{\partial \mathfrak{T}_{\bm r}(u(\bm x,t))}
    {\partial t}\right|_{t = t_0}
    = \lim_{\Delta t \to 0 } \frac{\mathfrak{T}_{\bm r}(u(\bm x,t)) -
     \mathfrak{T}_{\bm r}(u(\bm x,t_0))}{\Delta t}.
\end{equation}
The following proposition provides a representation of 
the derivative $\partial \mathfrak{T}_{\bm r}(u(\bm x,t))/\partial t$
in terms of $G(u(\bm x,t))$ and the Fr\'echet derivative \cite{VenturiSpectral} of the operator $\mathfrak{T}_{\bm r}(u)$.

\begin{proposition}
\label{prop:derivative}
If the solution $u_0 = u(\bm x,t_0)$ 
to \eqref{nonlinear-ibvp} at time 
$t_0$ is on the manifold $\mathcal{M}_{\bm r}$, then 
\begin{equation}
\frac{\partial \mathfrak{T}_{\bm r}(u(\bm x,t))}{\partial t}\biggr\vert_{t = t_0} = (\mathfrak{T}_{\bm r})'_{u_0} G(u(\bm x,t)),
\label{dTdt}
\end{equation}
where $(\mathfrak{T}_{\bm r})'_{u_0}$ is the 
Fr\'echet derivative of the nonlinear operator 
$\mathfrak{T}_{\bm r}$ at the point $u_0$.
\end{proposition}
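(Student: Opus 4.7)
The plan is to reduce the statement to an application of the chain rule for Fr\'echet derivatives applied to the composition $t \mapsto \mathfrak{T}_{\bm r}(u(\bm x,t))$. The two ingredients needed are (i) differentiability of the inner curve $t \mapsto u(\bm x,t)$, which is handed to us by \eqref{nonlinear-ibvp}, since $\partial_t u(\bm x,t)|_{t=t_0} = G(u_0)$; and (ii) Fr\'echet differentiability of the outer map $\mathfrak{T}_{\bm r}$ at the point $u_0 \in \mathcal{M}_{\bm r}$.

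First I would invoke the smoothness result already quoted in Section \ref{sec:step_truncation}: because $\mathcal{M}_{\bm r}$ is a smooth Hilbert submanifold of $H = L^2_\mu(\Omega)$, \cite[Proposition 5.1]{Salas_2019} yields an open neighborhood $U \subset H$ of $u_0$ on which the metric projection $\mathfrak{T}_{\bm r}$ is single-valued and smooth, hence Fr\'echet differentiable at $u_0$. Next, by continuity of the solution curve of \eqref{nonlinear-ibvp}, there exists $\delta > 0$ such that $u(\bm x,t) \in U$ for every $t \in (t_0 - \delta, t_0 + \delta)$, so the composition $F(t) := \mathfrak{T}_{\bm r}(u(\bm x,t))$ is a well-defined, differentiable $H$-valued curve on that interval.

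Then I would apply the chain rule for Fr\'echet derivatives to $F$ at $t=t_0$, obtaining
$$ F'(t_0) \;=\; (\mathfrak{T}_{\bm r})'_{u(\bm x,t_0)} \cdot \frac{\partial u}{\partial t}(\bm x,t_0). $$
Substituting $u(\bm x,t_0) = u_0$ and $\partial_t u(\bm x,t_0) = G(u_0)$ from \eqref{nonlinear-ibvp} gives exactly \eqref{dTdt}.

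The main obstacle is not the chain-rule step itself (which is standard whenever the outer map is Fr\'echet differentiable and the inner curve is strongly differentiable into $H$) but rather the fact that Fr\'echet differentiability of $\mathfrak{T}_{\bm r}$ is not automatic — metric projections onto general closed sets can be multivalued or merely directionally differentiable. The entire weight of the argument is borne by the submanifold structure of $\mathcal{M}_{\bm r}$ and the tubular-neighborhood/implicit-function argument packaged in \cite[Proposition 5.1]{Salas_2019}; once that is granted, the rest of the proof is a one-line chain rule computation.
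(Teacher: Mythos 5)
Your proof is correct and reaches the same conclusion, but by a somewhat different and in fact cleaner route. The paper writes $u(\bm x,t) = u_0(\bm x) + h(\bm x,t)$ with $h(\bm x,t)=\int_{t_0}^t G(u(\bm x,\tau))\,d\tau$, expands $\mathfrak{T}_{\bm r}(u_0 + h)$ in a full Taylor series in $h$ around $u_0$ (citing \cite[Theorem 6.1]{Nashed}), differentiates the series term-by-term in $t$, and then argues that every term of order $\geq 2$ vanishes at $t=t_0$ because $h(\bm x,t_0)=0$. You instead apply the chain rule for Fr\'echet derivatives directly to the composition $t\mapsto \mathfrak{T}_{\bm r}(u(\bm x,t))$. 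The two arguments are morally the same computation, but yours is more economical: it only requires first-order Fr\'echet differentiability of $\mathfrak{T}_{\bm r}$ (which is exactly what \cite[Proposition 5.1]{Salas_2019} delivers), whereas the paper's Taylor-expansion route implicitly needs higher-order differentiability and a convergent expansion, plus the ad hoc assumption that $\partial/\partial t$ commutes with each $(\mathfrak{T}_{\bm r})^{(n)}_{u_0}$. You also more carefully address the one genuinely delicate point — why $\mathfrak{T}_{\bm r}$ is single-valued and differentiable on a neighborhood of $u_0$, and why the solution curve stays in that neighborhood for small $|t-t_0|$ — which the paper's proof leaves implicit. Both are valid; yours is tighter in its hypotheses and makes the real mathematical content (smoothness of the metric projection onto a smooth submanifold) more visible.
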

\begin{proof}
Express the solution of \eqref{nonlinear-ibvp} 
at time $t \geq t_0$ as 
\begin{equation}
u(\bm x,t) = u_0(\bm x) + h(\bm x,t),
\end{equation}
where 
\begin{equation}
h(\bm x,t) = \int_{t_0}^t G(u(\bm x,\tau)) d\tau.
\end{equation}
Expanding $\mathfrak{T}_{\bm r}(u(\bm x,t))$ in a Taylor series around $u_0(\bm x)$ 
we obtain \cite[Theorem 6.1]{Nashed}
\begin{equation}
\label{truncation_taylor}
\begin{aligned}
\mathfrak{T}_{\bm r}(u(\bm x,t)) &= u_0(\bm x) + (\mathfrak{T}_{\bm r})'_{u_0} h(\bm x,t) + \frac{1}{2}(\mathfrak{T}_{\bm r})''_{u_0} h(\bm x,t)^2 + \cdots .
\end{aligned}
\end{equation}
Differentiating \eqref{truncation_taylor} with respect to $t$ and 
evaluating at $t = t_0$ we obtain
\begin{equation}
\label{truncation_derivative}
\frac{\partial \mathfrak{T}_{\bm r}(u(\bm x,t))}{\partial t}\biggr\vert_{t = t_0} = (\mathfrak{T}_{\bm r})'_{u_0} G(u(\bm x,t_0)),
\end{equation}
where we assumed that $\partial/\partial t$ commutes 
with $(\mathfrak{T}_{\bm r})'_{u_0}$ and used the fact that 
$\partial h(\bm x,t)/\partial t = G(u(\bm x,t))$ for the first order term. 
All of the higher order terms are seen to be zero by commuting $\partial/\partial t$ 
with $(\mathfrak{T}_{\bm r})^{(n)}_{u_0}$ and using chain rule.

\end{proof}

\vs
\noindent
Since $\mathfrak{T}_{\bm r}(u(\bm x,t))$ is an element of $\mathcal{M}_{\bm r}$ for all $t \geq t_0$, 
it follows that \eqref{dTdt} is an element of $T_{u_0}\mathcal{M}_{\bm r}$. 
Arguing on the optimality of the tangent space 
element $(\mathfrak{T}_{\bm r})'_{u_0} G(u(\bm x,t_0))$ it is seen that  \eqref{truncation_derivative} is 
the same problem as dynamic approximation \eqref{ibvp_dyn_approx}, i.e., 
$(\mathfrak{T}_{\bm r})'_{u_0} = P_{u_0}$. 
Now consider the scheme \eqref{step_truncation_scheme} and use a Taylor 
expansion of $\mathfrak{T}_{\bm r}$ around $u_{\bm r}(\bm x,t_k)$ on the right hand side 
\begin{equation}
    \label{step_truncation_scheme_taylor_expanded}
    u_{\bm r}(\bm x,t_{k+1}) =   u_{\bm r}(\bm x,t_k) + \Delta t (\mathfrak{T}_{\bm r})' \Phi\left(G,u_{\bm r}(\bm x,t_k), \Delta t\right) + O(\Delta t^2).
\end{equation}
Discarding higher order terms in $\Delta t$ yields
\begin{equation}
    \label{step_truncation_to_dyn_approx}
    u_{\bm r}(\bm x,t_{k+1}) \simeq   u_{\bm r}(\bm x,t_k) + \Delta t P_{u_{\bm r}} \Phi\left(G,u_{\bm r}(\bm x,t_k), \Delta t\right) .
\end{equation}
Moreover if the increment function $\Phi$ defines 
the Euler forward scheme 
\begin{equation} 
\Phi(G,u_{\bm r}(\bm x,t_k),\Delta t) = G(u_{\bm r}(\bm x,t_k)),
\end{equation}
then the scheme \eqref{step_truncation_to_dyn_approx} is equivalent to the 
scheme in \eqref{ibvp_dyn_approx}. 
Thus, we just proved the following lemma.  
\begin{lemma}
Step-truncation and dynamic approximation methods 
are consistent at least to first-order in $\Delta t$.
\label{lemma:consistency}
\end{lemma}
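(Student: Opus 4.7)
The proof plan is to directly execute the Taylor expansion argument that is already hinted at in equations \eqref{step_truncation_scheme_taylor_expanded}--\eqref{step_truncation_to_dyn_approx}, but to give a clean statement of what ``first-order consistency'' means here and to justify the identification $(\mathfrak{T}_{\bm r})'_{u_0}=P_{u_0}$.

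First I would start from the step-truncation scheme \eqref{step_truncation_scheme} applied with the Euler forward increment $\Phi(G,u_{\bm r}(\bm x,t_k),\Delta t)=G(u_{\bm r}(\bm x,t_k))$, so that the argument of $\mathfrak{T}_{\bm r}$ reads $u_{\bm r}(\bm x,t_k)+\Delta t\, G(u_{\bm r}(\bm x,t_k))$. Since $u_{\bm r}(\bm x,t_k)\in\mathcal{M}_{\bm r}$ by induction, the smoothness of $\mathfrak{T}_{\bm r}$ on a neighborhood of $\mathcal{M}_{\bm r}$ (guaranteed by \cite[Proposition 5.1]{Salas_2019}, invoked right after the definition of $\mathfrak{T}_{\bm r}$) lets me Taylor expand around $u_{\bm r}(\bm x,t_k)$ and use $\mathfrak{T}_{\bm r}(u_{\bm r}(\bm x,t_k))=u_{\bm r}(\bm x,t_k)$ to obtain
\begin{equation*}
u_{\bm r}(\bm x,t_{k+1}) = u_{\bm r}(\bm x,t_k) + \Delta t\,(\mathfrak{T}_{\bm r})'_{u_{\bm r}(\bm x,t_k)}G(u_{\bm r}(\bm x,t_k)) + O(\Delta t^2).
\end{equation*}

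Next I would identify $(\mathfrak{T}_{\bm r})'_{u_0}$ with the orthogonal tangent-space projector $P_{u_0}$ defined in \eqref{tangent_projector}. Proposition \ref{prop:derivative} gives $\partial_t\mathfrak{T}_{\bm r}(u(\bm x,t))|_{t=t_0}=(\mathfrak{T}_{\bm r})'_{u_0}G(u(\bm x,t_0))$ as an element of $T_{u_0}\mathcal{M}_{\bm r}$. Because $\mathfrak{T}_{\bm r}(u(\bm x,t))$ is by definition the minimizer of $\|u(\bm x,t)-v\|_H$ over $v\in\mathcal{M}_{\bm r}$, its time derivative along the curve $u(\bm x,t)$ at $t=t_0$ must equal the best tangent-vector approximation to $\dot u=G(u)$, i.e. the orthogonal projection $P_{u_0}G(u(\bm x,t_0))$. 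This reasoning holds for any direction $G(u_0)\in H$, so by linearity of the Fréchet derivative $(\mathfrak{T}_{\bm r})'_{u_0}=P_{u_0}$ on $H$.

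Substituting this identification yields
\begin{equation*}
u_{\bm r}(\bm x,t_{k+1}) = u_{\bm r}(\bm x,t_k) + \Delta t\, P_{u_{\bm r}(\bm x,t_k)}G(u_{\bm r}(\bm x,t_k)) + O(\Delta t^2),
\end{equation*}
which is exactly the Euler forward discretization of the dynamic-approximation equation $\dot u=G_{\bm r}(u)=P_u(G(u))$ from \eqref{ibvp_dyn_approx}. Since the two one-step increments agree up to an $O(\Delta t^2)$ local truncation error, the schemes are consistent to first order in $\Delta t$, proving the lemma.

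The routine parts are the Taylor expansion and the $O(\Delta t^2)$ bookkeeping; the only conceptual step that deserves care is the identification $(\mathfrak{T}_{\bm r})'_{u_0}=P_{u_0}$, which is the main obstacle. A fully rigorous justification should note that $\mathfrak{T}_{\bm r}$ is the identity on $\mathcal{M}_{\bm r}$, so its Fréchet derivative at $u_0\in\mathcal{M}_{\bm r}$ restricted to $T_{u_0}\mathcal{M}_{\bm r}$ is the identity, while its derivative along the normal space $N_{u_0}\mathcal{M}_{\bm r}$ must vanish by the minimization property of the metric projection. Combined with the orthogonal decomposition \eqref{tangent_normal_partition}, these two facts pin down $(\mathfrak{T}_{\bm r})'_{u_0}$ as exactly $P_{u_0}$.
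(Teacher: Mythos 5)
Your proposal follows the same route as the paper: Taylor expand $\mathfrak{T}_{\bm r}$ around the current iterate as in \eqref{step_truncation_scheme_taylor_expanded}--\eqref{step_truncation_to_dyn_approx}, identify $(\mathfrak{T}_{\bm r})'_{u_0}$ with the tangent-space projector $P_{u_0}$, and recognize the resulting one-step map as the Euler forward discretization of \eqref{ibvp_dyn_approx}. The only substantive difference is that you spell out the identification $(\mathfrak{T}_{\bm r})'_{u_0}=P_{u_0}$ more fully (identity on $T_{u_0}\mathcal{M}_{\bm r}$, vanishing on $N_{u_0}\mathcal{M}_{\bm r}$, then \eqref{tangent_normal_partition}), where the paper simply appeals to ``optimality of the tangent space element''; this is a welcome refinement but not a genuinely different argument.
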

\noindent
This Lemma applies to any first-order time integrator for dynamic approximation 
and step-truncation, including 
the Lie-Trotter splitting integrator we discussed in section 
\ref{sec:splitting_integrator}. 

\section{Rank-adaptive integration}
\label{sec:adaptive_rank}

The solution to the initial/boundary value problem 
\eqref{nonlinear-ibvp} is often not accurately represented 
on a tensor manifold with fixed rank, even for short 
integration times.  In this section we discuss effective 
methods to adaptively add and remove tensor 
modes from the solution based on appropriate criteria.

In the context of step-truncation algorithms, if 
the solution rank naturally decreases in time 
then the operator $\mathfrak{T}_{\bm r}$ in
 \eqref{step_truncation_scheme} is no longer 
well-defined. In this situation, replacing the operator 
$\mathfrak{T}_{\bm r}$ with $\mathfrak{T}_{\bm s}$ 
for an appropriate\footnote{Here $\leq$ denotes component-wise 
inequality of rank vectors, i.e., $\bm s<\bm r$ if 
and only if $s_i \leq r_i$ for all $i = 0,1,\ldots,d$.}  
$\bm s \leq \bm r$ allows for integration to continue.
On the other hand, if the solution rank increases in 
during integration then the operator $\mathfrak{T}_{\bm r}$ will still be 
well-defined for small enough $\Delta t$ but the 
approximation on $\mathcal{M}_{\bm r}$ will not 
retain accuracy. To address this problem of constant 
rank integration we shall introduce a criterion for 
rank increase of the FTT solution. 
Both decreasing and increasing rank are based on 
FTT {orthogonalization and} truncation (see section \ref{sec:orthogonalization}).
For the remainder of this section let $u(\bm x,t)$ be the 
solution to \eqref{nonlinear-ibvp} and 
$u_{\bm r}(\bm x,t) \in \mathcal{M}_{\bm r}$ an 
approximation of $u(\bm x,t)$ obtained by either the 
solution of the dynamical approximation problem 
\eqref{ibvp_dyn_approx} or step-truncation 
methods (see section \ref{sec:step_truncation}).

\subsection{Decreasing tensor rank}
\label{sec:decreasing_rank}
For decreasing tensor rank at time $t$, we are interested in 
determining if $u_{\bm r}(\bm x,t) \in \mathcal{M}_{\bm r}$ 
is close to an element $u_{\bm s}(\bm x,t) \in \mathcal{M}_{\bm s}$ 
for $\bm s \leq \bm r$. This can be achieved by simply 
performing a FTT truncation on $u_{\bm r}(\bm x,t)$ 
with small threshold $\epsilon_{\mathrm{dec}}$.
Since the splitting integrator described 
in section \ref{sec:splitting_integrator} 
is robust to over approximation by tensor rank, it may not be 
strictly necessary to decrease rank during integration. 
However, it is desirable to have solutions 
of the lowest rank possible (while retaining accuracy) 
when solving high dimensional problems. 
For these reasons it is advisable not perform a FTT truncation at each 
time step (as this would be unnecessary and inefficient when using an operator 
splitting integrator) but only every once and a while. 
One may choose a criterion for when to check for 
rank decrease based on the problem, step size, current rank, and dimension.
If one is using a step-truncation method with a tolerance based 
FTT truncation algorithm such as the one described in section \ref{sec:orthogonalization} 
then rank decrease is already built into each time step.

\subsection{Increasing tensor rank}
\label{sec:increase_rank}
As a general heuristic one would like to increase 
rank at the time when the error between the 
low-rank approximation $u_{\bm r}(\bm x,t)$ 
and the PDE solution $u(\bm x,t)$ will become 
large after the subsequent time step. 
Such critical time instant for rank increase 
can be determined by examining the normal component 
of the dynamics
\begin{equation}
N_{{u_{\bm r}}}(G({u_{\bm r}})) = G({u_{\bm r}}) - P_{{u_{\bm r}}}(G({u_{\bm r}})).
\end{equation}
To describe this situation further, 
suppose we are integrating one time step forward 
from $t_i$ to $t_{i+1}$. The error at $t_{i+1}$ is given by 
\begin{equation}
    \begin{aligned}
    E(t_i,t_{i+1}) &= u_{\bm r}(\bm x,t_{i+1}) - u(\bm x,t_{i+1}) \\
    &= u(\bm x,t_i) + \int_{t_i}^{t_{i+1}} G(u(\bm x,\tau)) d\tau - \left( \mathfrak{T}_{\bm r} ( u(\bm x,t_i) )
    + \int_{t_i}^{t_{i+1}} P_{u_{\bm r}(\bm x,\tau)} G(u_{\bm r}(\bm x,\tau)) d\tau \right).
    \end{aligned} 
\end{equation}
If $u(\bm x,t_i) \in \mathcal{M}_{\bm r}$ then 
\begin{equation}
\label{error_1}
    E(t_i,t_{i+1}) = \int_{t_i}^{t_{i+1}}\left[ G(u(\bm x,\tau)) - P_{u_{\bm r}(\bm x,\tau)} G(u_{\bm r}(\bm x,\tau)) \right]
    d\tau.
\end{equation}
For small $\Delta t$ the above integral can be approximated 
by the left endpoint 
\begin{equation}
\label{left_endpoint_approx_of_normal}
\begin{aligned}
    E(t_i,t_{i+1}) &=\Delta t \left( G(u_{\bm r}(\bm x,t_i)) - P_{u_{\bm r}(\bm x,t_i)} G(u_{\bm r}(\bm x,t_i)) \right) + O(\Delta t^2) \\
    &= \Delta t N_{u_{\bm r}(\bm x,t_i)}(G(u_{\bm r}(\bm x,t_i))) + O(\Delta t^2),
\end{aligned}
\end{equation}
where $N_{u_{\bm r}(\bm x,\tau)}$ denotes the 
orthogonal projection onto the normal space of 
$\mathcal{M}_{\bm r}$ at the point $u_{\bm r}(\bm x,t)$.
Hence, up to first-order in $\Delta t$ we have that 
\begin{equation}
    \|E(t_i,t_{i+1})\| \simeq 
    \Delta t \|N_{u_{\bm r}(\bm x,t_i)}(G(u_{\bm r}(\bm x,t_i)))\|.
\end{equation}
From this approximation we see that a reasonable criterion for increasing 
rank at time $t_i$ is when the norm of the normal component 
of $G(u_{\bm r}(\bm x,t_i))$ is larger than some threshold $\epsilon_{\mathrm{inc}}$ (see Figure \ref{fig:normal component})
\begin{equation}
\label{condition_for_rank_inc}
   \| N_{u_{\bm r}(\bm x,t_i)}(G(u_{\bm r}(\bm x,t_i)))\| > \epsilon_{\mathrm{inc}}.
\end{equation}
\begin{figure}\centerline{\includegraphics[height=6.5cm]{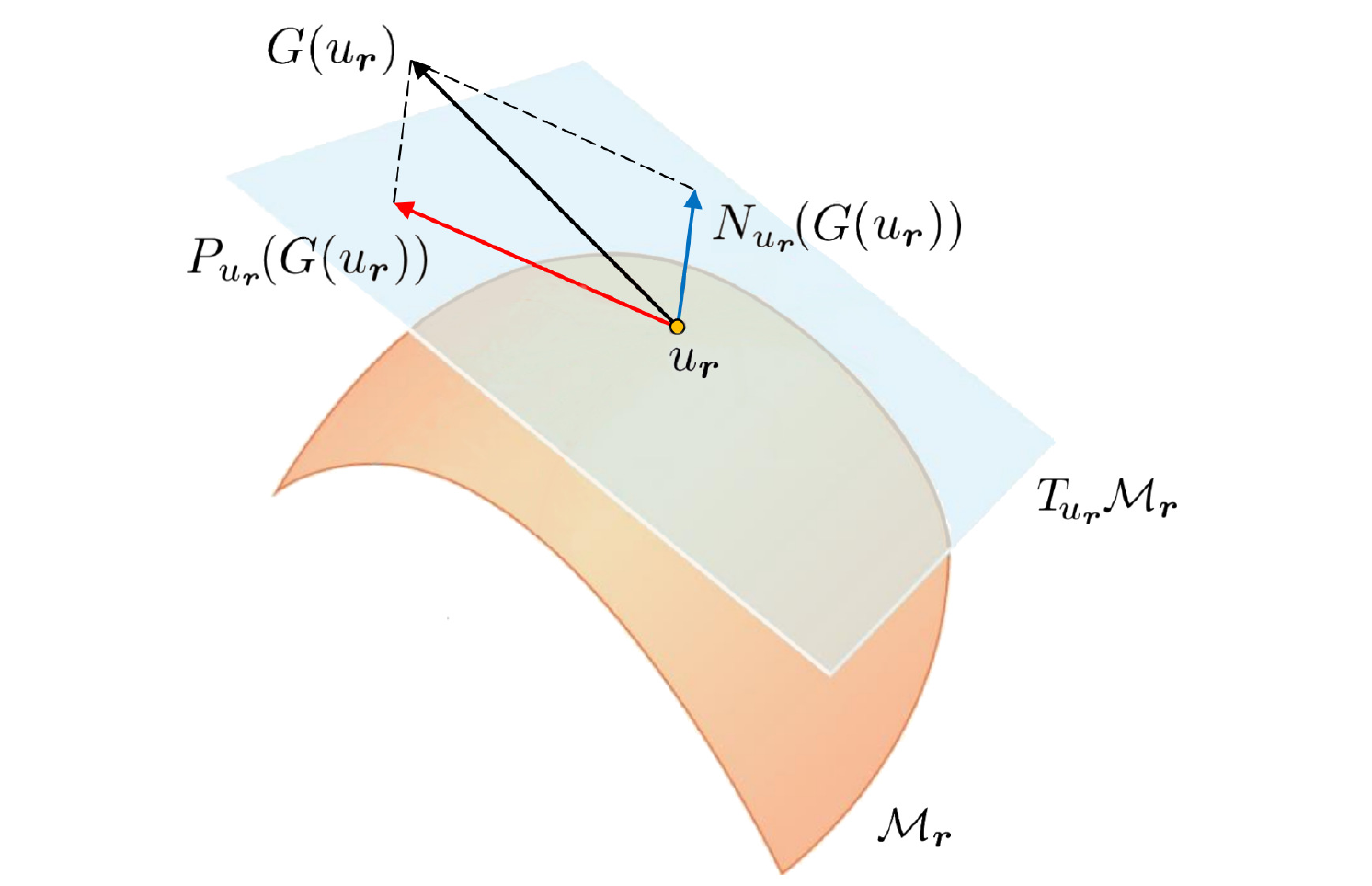}}
\caption{Tangent and normal components of $G\left(u_{\bm r}\right) 
= \partial u_{\bm r}/\partial t$ at $u_{\bm r}$.  The tensor rank 
of the solution is increased at time $t_i$ if the norm 
of the normal component $N_{u_{\bm r}}(G(u_{\bm r}))$ is 
larger than a specified threshold $\epsilon_{\mathrm{inc}}$.}
\label{fig:normal component}
\end{figure}
To efficiently compute the normal component 
$N_{u_{\bm r}(\bm x,t_i)}(G(u_{\bm r}(\bm x,t_i)))$ 
at each time instant $t_i$ we use the formula 
\begin{equation}
N_{u_{\bm r}(\bm x,t_i)}(G(u_{\bm r}(\bm x,t_i))) = G(u_{\bm r}(\bm x,t_i)) - P_{u_{\bm r}(\bm x,t_i)}(G(u_{\bm r}(\bm x,t_i))),
\end{equation}
where $N_{\bm r}(G(u_{\bm r}))$ and $T_{u_{\bm r}}(G(u_{\bm r}))$ represent the normal and tangential components of $G(u_{\bm r})$.
The tangential component can be approximated at a 
low computational cost via backward differentiation 
formulas (BDF) as  
\begin{align}
\label{backwards_fd_1st_order}
\widetilde{P}^{(2)}_{u_{\bm r}} G(u_{\bm r}) = &\frac{u_{\bm r}(\bm x, t_{i}) - u_{\bm r}(\bm x, t_{i-1})}{\Delta t} + O(\Delta t^2)  &\text{(two-point formula)},\\
\label{backwards_fd_2nd_order}
 \widetilde{P}^{(3)}_{u_{\bm r}} G(u_{\bm r}) = &\frac{3u_{\bm r}(\bm x, t_i) - 4u_{\bm r}(\bm x, t_{i-1}) + u_{\bm r}(\bm x,t_{i-2})}{2\Delta t} + O(\Delta t^3)
   &\text{(three-point formula)},\\
    \label{backwards_fd_p_order}
 \widetilde{P}^{(p)}_{u_{\bm r}} G(u_{\bm r}) = & BD_p(\Delta t, u_{\bm r}(\bm x,t_i), u_{\bm r}(\bm x,t_{i-1}), \ldots, u_{\bm r}(\bm x,t_{i-p})) + O(\Delta t^{p+1})   &\text{($p$-point formula)}.
\end{align}
With a $p$-point backward difference approximation 
of the tangent space projection available at $t_i$ we easily obtain 
an approximation of the normal component of $G(u_{\bm r})$ at $t_i$ 
\begin{equation}
\label{normal_component}
    N_{u_{\bm r}(\bm x,t_i)}(G(u_{\bm r}(\bm x,t_i))) = G(u_{\bm r}(\bm x,t_i)) - \widetilde{P}^{(p)}_{u_{\bm r}(\bm x,t_i)} G(u_{\bm r}(\bm x,t_i))  + O(\Delta t^{p+1}),
\end{equation}
which allows us to implement the 
criterion \eqref{condition_for_rank_inc} for rank 
increase at time $t_i$. Clearly, the $p$-point formula 
\eqref{backwards_fd_p_order}, and the corresponding 
approximation of the normal component \eqref{normal_component}, 
are effectively of order $p$ in $\Delta t$ if and only if 
the time snapshots $u_{\bm r}(\bm x,t_i)$ are computed 
via a temporal integrator of order $p$. 
We emphasize that this method of using a finite difference 
stencil based on the temporal grid for approximating 
the tangential component of the 
dynamics (and thus the normal component) 
creates a lower bound for the choice of normal vector 
threshold $\epsilon_{\mathrm{inc}}$. 
In particular, we must have that $K_1(\Delta t)^p \geq \epsilon_{\text{inc}}$ for some constant $K_1$ 
otherwise the error incurred from our approximation 
of the normal component may trigger unnecessary mode addition.
This approximation of the normal component is cheap but only 
informs on whether or not it is appropriate to add modes at 
time instant $t_i$. 

The subsequent question is which entries of the rank 
vector $\bm r$ need to be increased. In order to make such a 
determination we expand the approximate 
solution at time $t$ as 
\begin{equation}
\label{over_represented_solution}
u_{\bm r}(\bm x,t) = \bm \Psi_1(t) \cdots \bm \Psi_d(t) + 
\bm \Gamma_1(t) \cdots \bm \Gamma_d(t), \qquad 
\bm \Psi_i \in M_{r_{i-1} \times r_i}(L^2_{\mu_i}(\Omega_i)), 
\quad \bm \Gamma_i \in M_{f_{i-1} \times f_i}
(L^2_{\mu_i}(\Omega_i)),
\end{equation}
where $\bm \Gamma_1(t) \cdots \bm \Gamma_d(t) = 0$ 
for all $t \in [0,T]$. Differentiating 
\eqref{over_represented_solution} with respect to 
time yields
\begin{equation}
    \frac{\partial u_{\bm r}(\bm x,t)}{\partial t} = \frac{\partial}{\partial t} \left[ \bm \Psi_1(t) \cdots \bm \Psi_d(t) \right] + \frac{\partial }{\partial t} \left[ \bm \Gamma_1(t) \cdots \bm \Gamma_d(t) \right].
\end{equation}
Subtracting off the tangential 
component \eqref{tangent_vector} we have the normal 
component at time $t$ 
\begin{equation}
    \label{normal_comp_in_FTT_format}
    N_{u_{\bm r}(\bm x,t)}\left(\frac{\partial u_{\bm r}(\bm x,t)}{\partial t}\right) = \frac{\partial}{\partial t} \left[ \bm \Gamma_1(t) \cdots \bm \Gamma_d(t) \right].
\end{equation}
Next, orthogonalize the partial product $\bm \Gamma_{\leq i-1}(t)$ 
from the left and the partial product $\bm \Gamma_{\geq i}(t)$ 
from the right to obtain 
\begin{equation}
\label{time_deriv_of_normal}
    N_{u_{\bm r}(\bm x,t)}\left(\frac{\partial u_{\bm r}(\bm x,t)}{\partial t}\right) = \frac{\partial}{\partial t} 
    \left[ \bm \Gamma_1(t) \cdots \bm \Gamma_{i-1}(t) \bm C_i(t) \bm \Gamma_{i}^{\top}(t) \cdots \bm \Gamma_d^{\top}(t) \right],
\end{equation}
where $\bm C_i = \bm 0_{r_{i-1} \times r_i}$ and $\left\langle \bm \Gamma_i^{\top} \bm \Gamma_i \right\rangle_i = \bm I$ for all $i = 1,2,\ldots,d$. Expand \eqref{time_deriv_of_normal} using a product rule and evaluate at $t = t_i$
\begin{equation}
    \left[ N_{u_{\bm r}(\bm x,t)}\left(\frac{\partial u_{\bm r}(\bm x,t)}{\partial t}\right)\right]_{t = t_i} = \bm \Gamma_1(t_i) \cdots \bm \Gamma_{j-1}(t_i) \frac{\partial \bm C_j(t)}{\partial t}\bigg\vert_{t = t_i} \bm \Gamma_j(t_i) \cdots \bm \Gamma_d(t_i).
\end{equation}
From the previous equation we see that the FTT autocorrelation 
matrices of the normal component at time instant $t_i$ 
are the time derivatives of the zero energy modes in the 
current solution. Thus, if the normal component has FTT 
rank $\bm n$ then the solution $u_{\bm r}(\bm x,t)$ 
at time $t_i$ should be represented by an FTT tensor of 
rank $\bm r + \bm n$. Certainly, the solution will be over 
represented at $t_i$ with rank $\bm r+ \bm n$. However, 
after one step of the splitting integrator the additional 
ranks will ensure that the low-rank solution 
$u_{\bm r+\bm n}(\bm x,t) \in \mathcal{M}_{\bm r+\bm n}$ 
retains its accuracy. 

The main steps of the algorithm we propose to 
adaptively increase the tensor rank are summarized 
in Algorithm \ref{alg:adaptive_rank_inc}. The operation ``$*$''
appearing within the conditional statement if/end 
denotes scalar times FTT tensor, and is meant 
to indicate that the multiplication is done by scaling 
the first core of the tensor with the scalar $0$ and leaving 
the remainder of the cores unchanged \cite{OseledetsTT}. 
As we will demonstrate 
in section \ref{sec:numerics}, Algorithm 1 is robust 
and it yields accurate results that do no require ad-hoc 
approximations such the matrix pseudo-inverse approximation 
introduced in \cite{robust_do/bo}.

\vs
\begin{algorithm}[t]
\SetAlgoLined
 \caption{One step integration with adaptive rank increase}
\label{alg:adaptive_rank_inc}
 \KwIn{\\
 \begin{itemize}
 \item[] $u_{\bm r}(\bm x,t_i),u_{\bm r}(\bm x,t_{i-1}), \ldots, u_{\bm r}(\bm x,t_{i-p})$ $\rightarrow$ time snapshots of the PDE solution with rank $\bm r$,
 \item[]  $G(u_{\bm r}(\bm x,t_i))$ $\rightarrow$ velocity vector defined by the right hand side of the PDE \eqref{nonlinear-ibvp} at time $t_i$,
 \item[] $\Delta t$ $\rightarrow$ time step, 
 \item[] $\epsilon_{\mathrm{inc}}$ $\rightarrow$ threshold for the norm of normal component $N_{u_{\bm r}(\bm x,t_i)} (G(u_{\bm r}(\bm x,t_i)))$.
 \end{itemize}
 }
 \KwOut{$u_{\bm r+\bm n}(\bm x,t_{i+1})$ $\rightarrow$  PDE solution with rank $\bm r+\bm n$ at time $t_{i+1}$\vspace{0.2cm}}
{ \bf Initialization:}\\
\begin{itemize}
\item Approximate the constant rank velocity vector via the BDF formula:  \\
 $\widetilde{P}^{(p)}_{u_{\bm r}(\bm x,t_i)} G(u_{\bm r}(\bm x,t_i)) = BD_p(u_{\bm r}(\bm x,t_i),u_{\bm r}(\bm x,t_{i-1}), \ldots, u_{\bm r}(\bm x,t_{i-p}))$ \\
\item  Compute  the normal component: \\
 $N_{u_{\bm r}(\bm x,t_i)} G(u_{\bm r}(\bm x,t_i)) = G(u_{\bm r}(\bm x,t_i)) - \widetilde{P}^{(p)}_{u_{\bm r}(\bm x,t_i)} G(u_{\bm r}(\bm x,t_i))$ 
 \end{itemize}
 { \bf Runtime:}\\
 \begin{itemize}
 \item {\bf if} $\|N_{u_{\bm r}(\bm x,t_i)} G(u_{\bm r}(\bm x,t_i))\| > \epsilon_{\mathrm{inc}}$ {\bf then} \\
 \hspace{0.2cm}Compute the FTT decomposition of normal component: \\
 \hspace{0.2cm}$N_{\text{TT}}(\bm x,t_i) = \text{FTT}(N_{u_{\bm r}(\bm x,t_i)} G(u_{\bm r}(\bm x,t_i)))$ \\
\vspace{0.2cm}\hspace{0.2cm} Initialize to zero  
additional tensor modes in $u_{\bm r}(\bm x,t_i)$, as 
many as the rank of $N_{\text{TT}}$ (say $\bm n$): \\
 \hspace{0.2cm} $u_{\bm r+\bm n}(\bm x,t_i) = u_{\bm r}(\bm x,t_i) + 0*N_{\text{TT}}(\bm x,t_i)$\\
 {\bf end}
 \item Use one step of Lie-Trotter splitting integrator to map $u_{\bm r+\bm n}(\bm x,t_i)$ into $u_{\bm r+\bm n}(\bm x,t_{i+1})$ 
 \end{itemize} 
 
\end{algorithm}
\vs

\subsection{Order of the rank-adaptive tensor scheme}

Let us choose the threshold  $\epsilon_{\mathrm{inc}}$ in 
\eqref{condition_for_rank_inc} to satisfy 
\begin{equation}
    \label{normal_thresh_upper_bound}
    \epsilon_{\mathrm{inc}} \leq K_2 \Delta t, 
\end{equation}
and assume that the condition 
\begin{equation}
   \| N_{u_{\bm r}(\bm x,t)}(G(u_{\bm r}(\bm x,t)))\| \leq \epsilon_{\mathrm{inc}}
\end{equation}
is satisfied for all $ t \in [0,T]$. Then we have the following bound 
for the local truncation error 
\begin{equation}
    \begin{aligned}
        \|E(t_i,t_{i+1})\| &= \left\| \int_{t_i}^{t_{i+1}} N_{u_{\bm r}(\bm x,\tau)}(G(u_{\bm r}(\bm x,\tau))) d\tau \right\| \\
        &\leq \int_{t_i}^{t_{i+1}} \| N_{u_{\bm r}(\bm x,\tau)}(G(u_{\bm r}(\bm x,\tau))) \| d\tau \\
        &\leq \int_{t_i}^{t_{i+1}}K_2 \Delta t d\tau \\
        &= K_2 \Delta t^2.
    \end{aligned}
\end{equation}
In particular, we have that the continuous-time 
rank-adaptive scheme is order one consistent 
in $\Delta t$ if the normal vector threshold 
is set as in \eqref{normal_thresh_upper_bound}. 

When implementing the adaptive scheme we usually 
discretize the time domain $[0,T]$ into a mesh of time instants as 
in \eqref{discrete_time}. Therefore, we do not necessarily have control 
over the normal vector for all $t \in [0,T]$ but rather only at a finite 
number of time instants. However, an analogous 
argument as we have made for order one consistency in the 
continuous time {rank-adaptive} scheme holds for 
the discrete time {rank-adaptive} scheme by considering the 
first-order approximation of the local truncation error given in 
\eqref{left_endpoint_approx_of_normal}. In particular by using 
the equality in \eqref{left_endpoint_approx_of_normal} and 
discrete time thresholding of the normal component
\begin{equation}
\label{discrete_normal_thresholding}
    \| N_{u_{\bm r}(\bm x,t_i)}(G(u_{\bm r}(\bm x,t_i)))\| \leq \epsilon_{\mathrm{inc}}, \qquad \forall i = 0,1,\ldots,N,
\end{equation}
we have that 
\begin{equation}
    \begin{aligned}
        \|E(t_i,t_{i+1})\| &= \left\| \Delta t N_{u_{\bm r}(\bm x,t_i)}(G(u_{\bm r}(\bm x,t_i))) + O(\Delta t^2) \right\| \\
        &\leq \left\| \Delta t N_{u_{\bm r}(\bm x,t_i)}(G(u_{\bm r}(\bm x,t_i)))\right\| + \| O(\Delta t^2) \| \\
        &= K_2 \Delta t^2 + O(\Delta t^2) \\
        &= O(\Delta t^2).
    \end{aligned}
\end{equation}
This proves that the discrete time {rank-adaptive} scheme 
with normal threshold given by \eqref{discrete_normal_thresholding} 
is consistent with order one in $\Delta t$. Higher-order consistency 
results can be obtained with higher-order time integration methods 
and higher-order estimators for the normal vector ${N_{u_{\bm r}}} G(u_{\bm r})$.


\section{Numerical examples}
\label{sec:numerics}
In this section we demonstrate the proposed rank-adaptive 
FTT tensor method on linear and nonlinear PDEs. In all examples 
the rank-adaptive scheme relies on first-order 
Lie-Trotter operator splitting time integration
\eqref{split_diff_eqs}, and the thresholding criterion 
\eqref{condition_for_rank_inc}. For each PDE we rigorously 
assess the accuracy of the proposed rank-adaptive tensor
method by comparing it with benchmark solutions computed 
with well-established numerical methods.

\subsection{Two-dimensional variable coefficient advection equation}
Let us begin with the two-dimensional variable coefficient advection problem
\begin{equation}
 \label{2d_advection}
 \begin{cases}
\displaystyle\frac{\partial u(x_1,x_2,t)}{\partial t} = (\sin(x_1) +
 \cos(x_2))\frac{\partial u(x_1,x_2,t)}{\partial x_1} +
 \cos(x_2)\frac{\partial u(x_1,x_2,t)}{\partial x_2}, \vs\\
u(x_1,x_2,0) = \exp[\sin(x_1 + x_2)],
\end{cases}
\end{equation}
on the flat torus $\Omega=\mathbb{T}^2$. We have shown 
in previous work \cite{Dektor_2020} that the tensor 
solution to the PDE \eqref{2d_advection} 
increases in rank as time increases.
\begin{figure}[t]
\hspace{0.2cm}
\centerline{\footnotesize\hspace{0.1cm}$t = 0.0$ \hspace{4.3cm} $t = 0.5$  \hspace{4.3cm} $t = 1.0$ }

\hspace{0.25cm} 
\centerline{
	\rotatebox{90}{\hspace{1.5cm}\footnotesize Adaptive FTT }
		\includegraphics[width=0.265\textwidth,height=3.9cm]{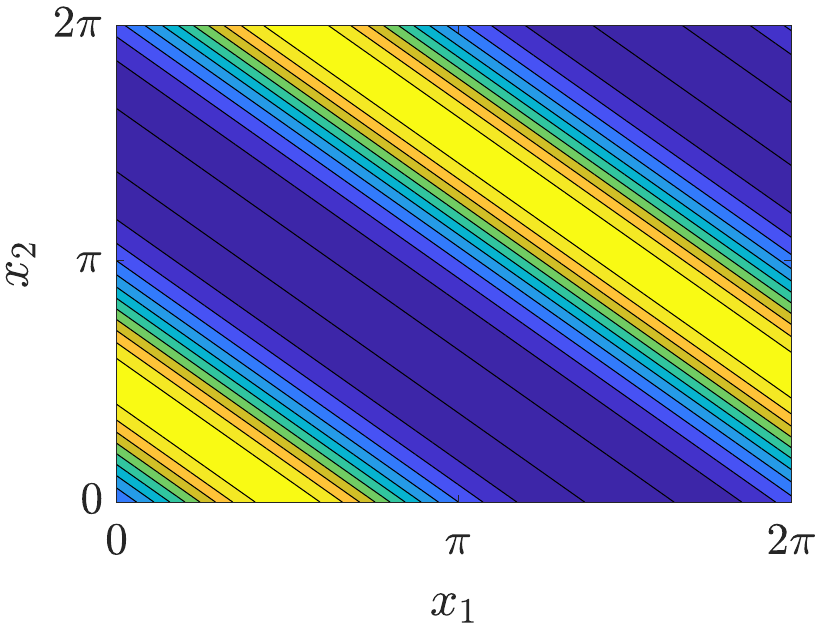}
		\hspace{.8cm}
		\includegraphics[width=0.268\textwidth,height=3.9cm]{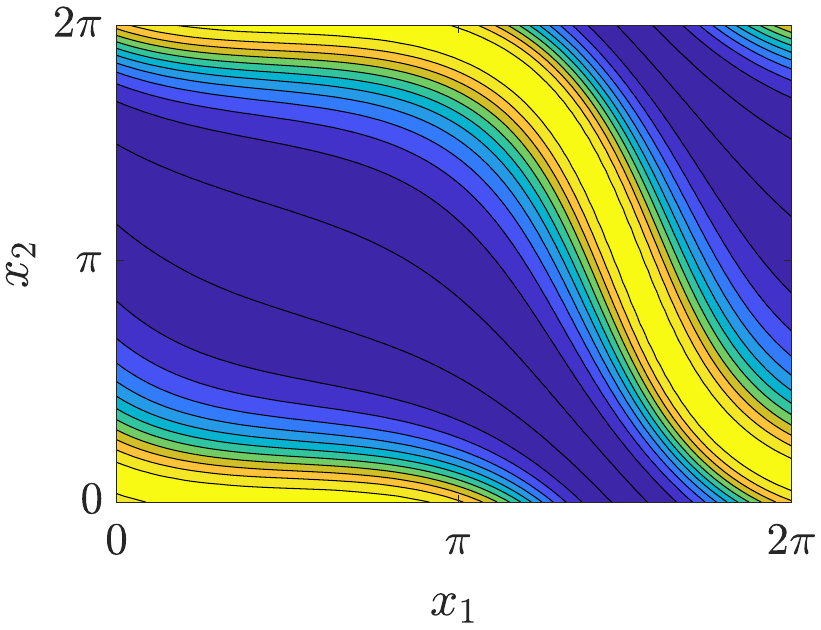}
		\hspace{.75cm}
		\includegraphics[width=0.31\textwidth,height=3.9cm]{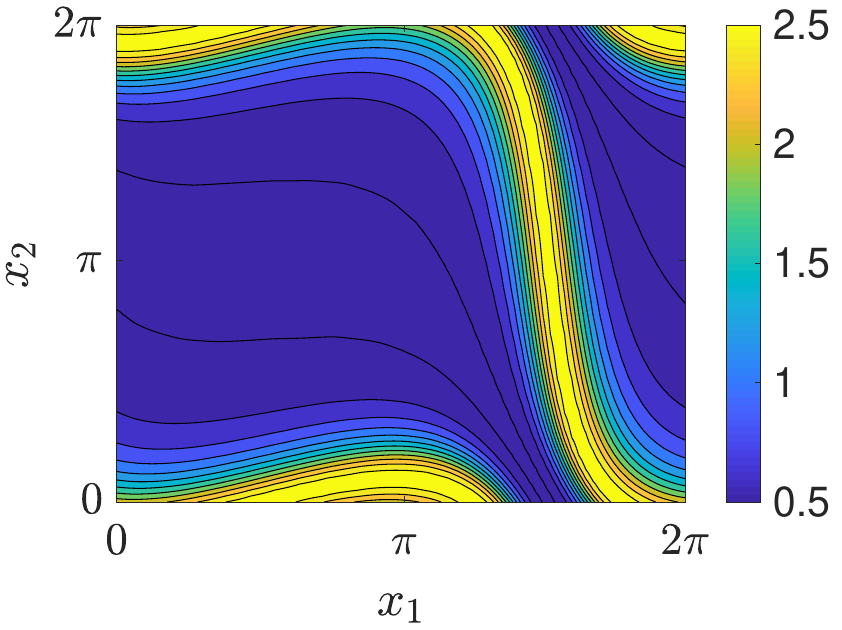}	
		\hspace{.1cm}
}
	
\hspace{0.3cm}	
\centerline{
\rotatebox{90}{\hspace{.7cm}  \footnotesize  Method of characteristics}
       \includegraphics[width=0.265\textwidth,height=3.9cm]{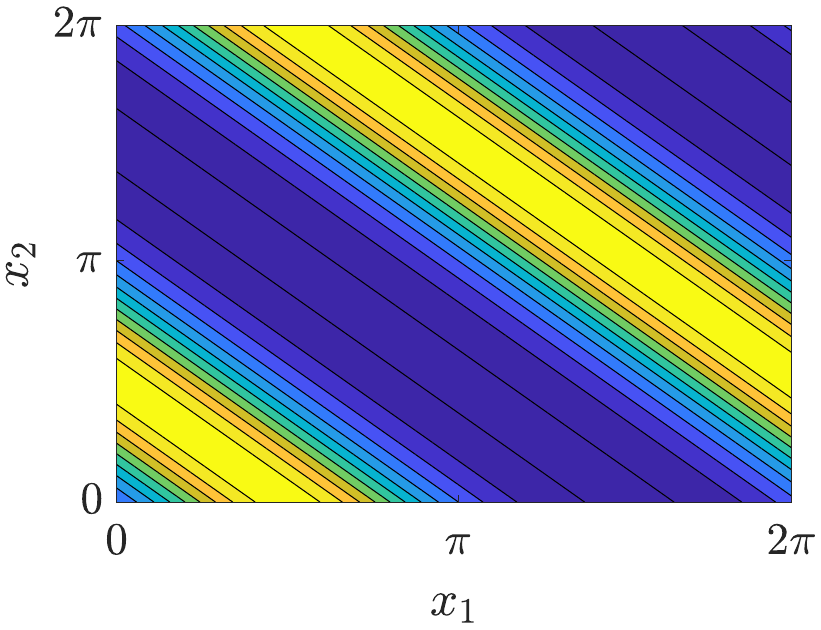}  \hspace{.8cm}
	    \includegraphics[width=0.268\textwidth,height=3.9cm]{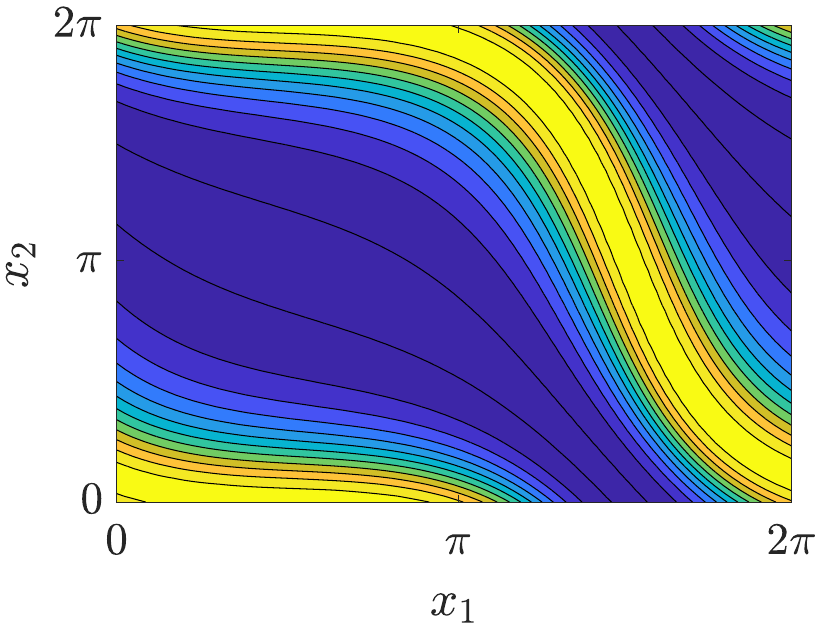}
	    \hspace{.75cm}
	    \includegraphics[width=0.31\textwidth,height=3.9cm]{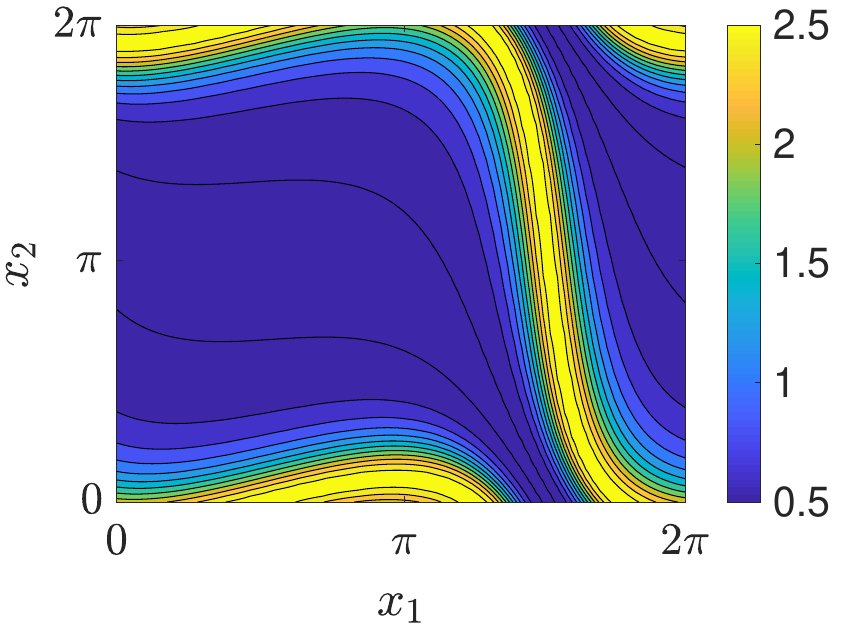}
	    \hspace{.1cm}
}

\hspace{0.22cm}	
\centerline{
	\rotatebox{90}{\hspace{1.2cm}  \footnotesize Pointwise error}
       \includegraphics[width=0.323\textwidth,height=4.3cm]{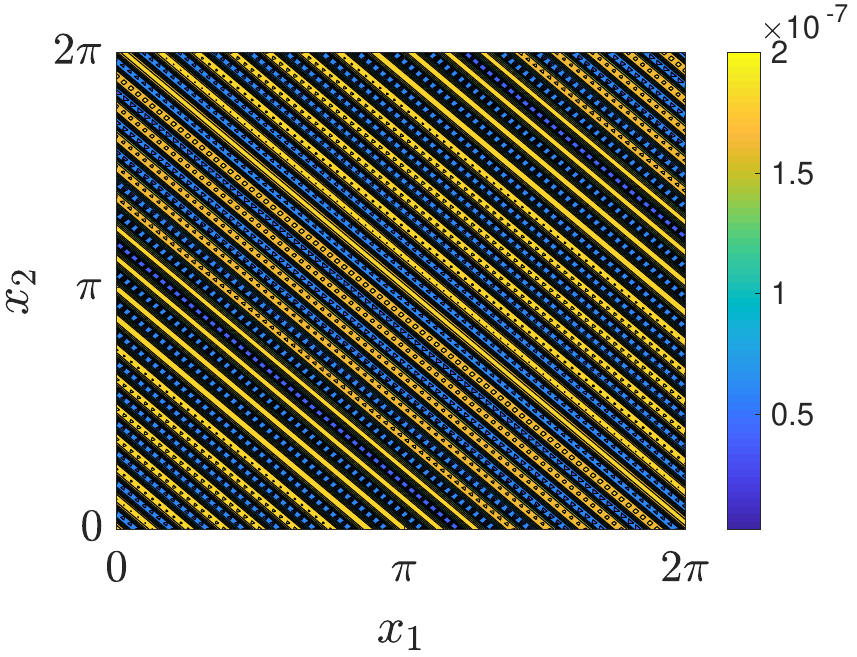} \hspace{-.1cm}
	    \includegraphics[width=0.325\textwidth,height=4.3cm]{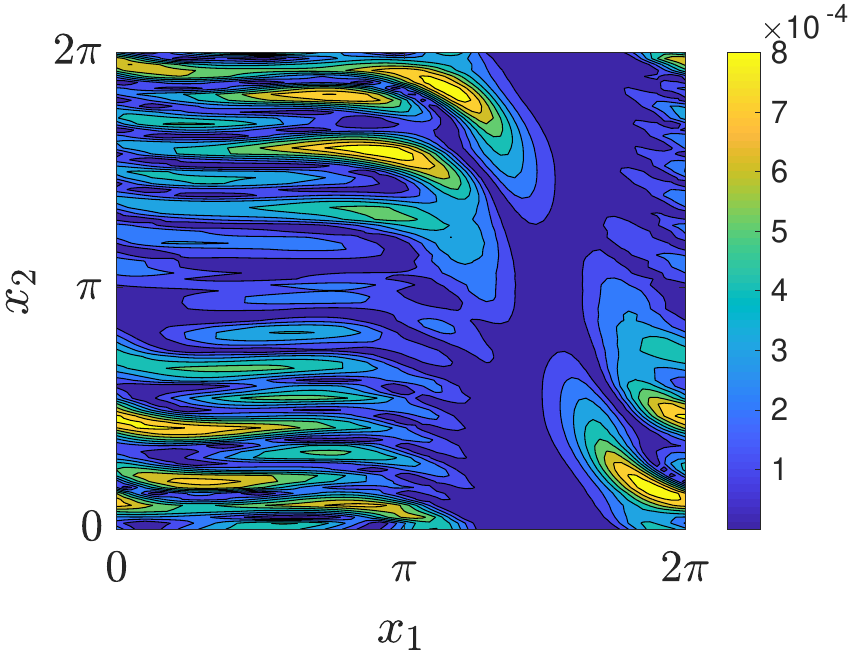}\hspace{-0.05cm}
	    \includegraphics[width=0.315\textwidth,height=4.3cm]{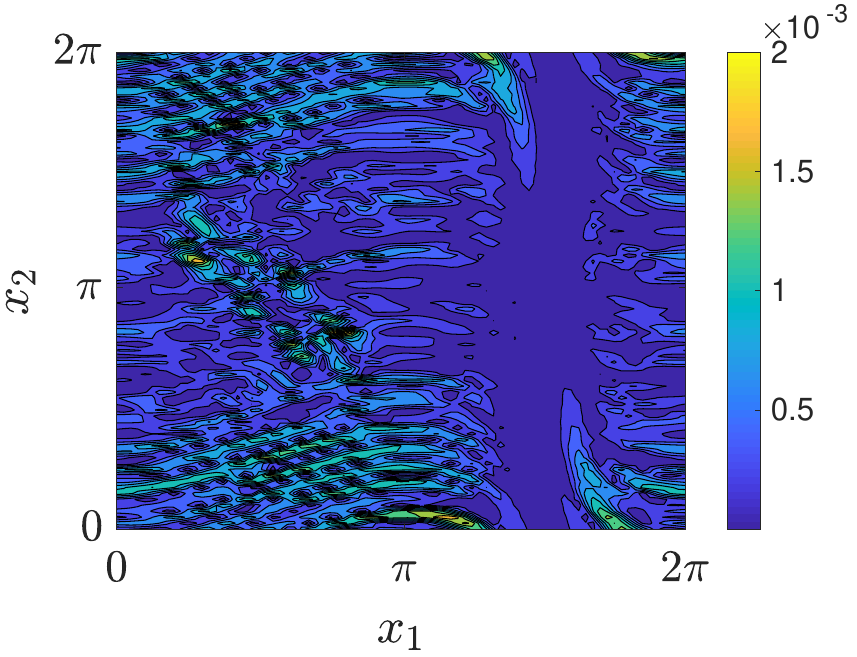}
}
\caption{Variable coefficient advection equation
\eqref{2d_advection}. Time snapshots of the rank-adaptive FTT solution 
${u_{\bm r}}(x_1,x_2,t)$ obtained with threshold 
$\epsilon_{\mathrm{inc}} = 10^{-2}$ (top), the 
semi-analytical solution $u_{\text{ref}}(x_1,x_2,t)$ (middle), and 
the pointwise error between the two solutions (bottom).}
\label{fig:2D_advection_solution_time_evolution}
\end{figure}
As is well known, the PDE \eqref{2d_advection} can be 
reduced to the trivial ODE $du/dt=0$ along the flow generated 
by the dynamical system (see, e.g.,  \cite{Rhee}) 
\begin{equation}
\label{char_system}
\left\{
\begin{aligned}
\displaystyle\frac{d x_1}{d t} &= \sin(x_1) + \cos(x_2), \vs\\
\displaystyle\frac{d x_2}{d t} &= \cos(x_2).
\end{aligned}\right.
\end{equation}
With the flow $\{x_1(t,x_{01},x_{02}),x_1(t,x_{01},x_{02})\}$ 
available, we can write the analytical solution to \eqref{2d_advection} as
\begin{equation}
u_{\mathrm{ref}}(x_1,x_2,t)=\exp\left[\sin(x_{01}(x_1,x_2,t) + x_{02}(x_1,x_2,t))\right],
\label{2Dsol}
\end{equation}
where $\{x_{01}(x_1,x_2,t), x_{02}(x_1,x_2,t)\}$ denotes 
the inverse flow generated by \eqref{char_system}. 
We obtain a semi-analytical solution to the 
PDE \eqref{2d_advection} by solving the characteristic system 
\eqref{char_system} numerically for different initial conditions 
and then evaluating \eqref{2Dsol}. A few time snapshots of the 
semi-analytical solution \eqref{2Dsol} are plotted 
in Figure \ref{fig:2D_advection_solution_time_evolution} (middle row).

\begin{figure}[t]
\centerline{\footnotesize\hspace{0.40cm} (a) \hspace{5cm} (b) \hspace{5cm} (c) }
	\centering
		\includegraphics[width=0.33\textwidth]{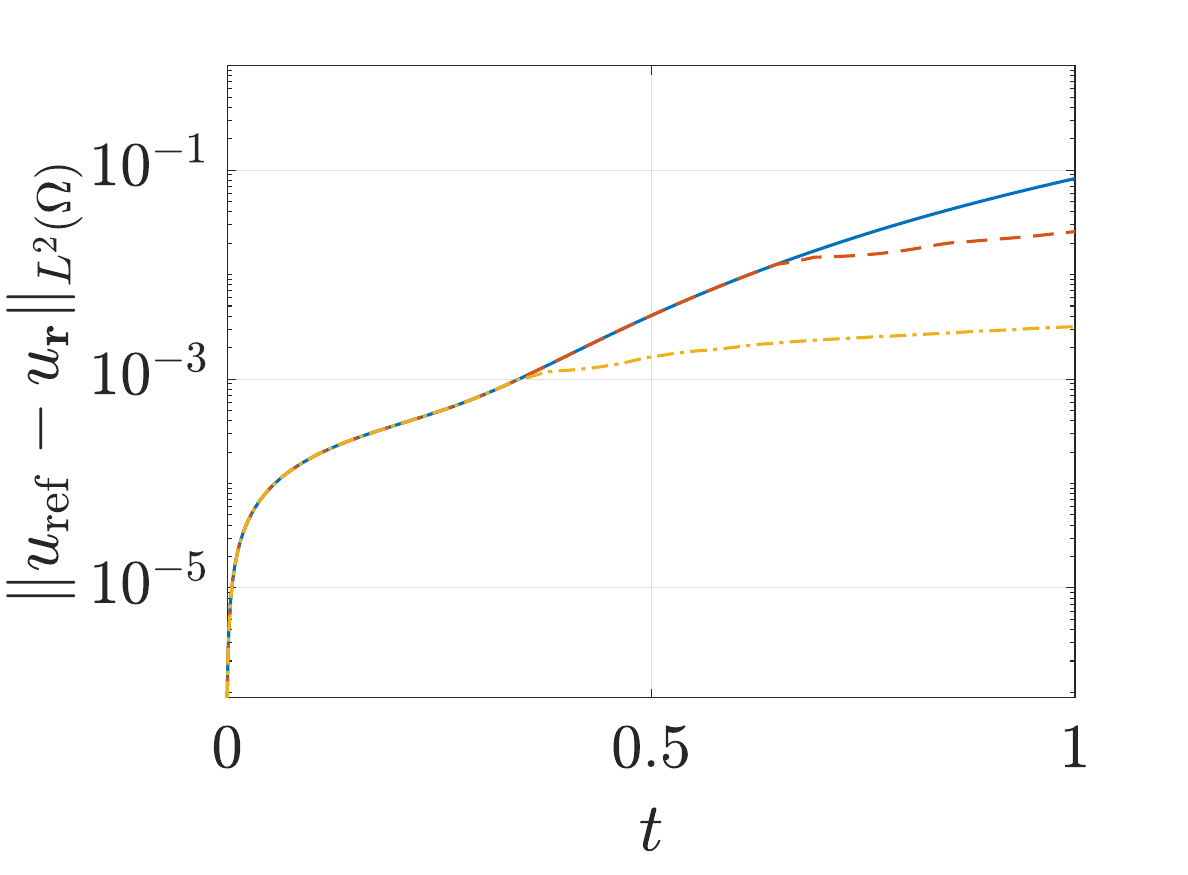}
		\includegraphics[width=0.33\textwidth]{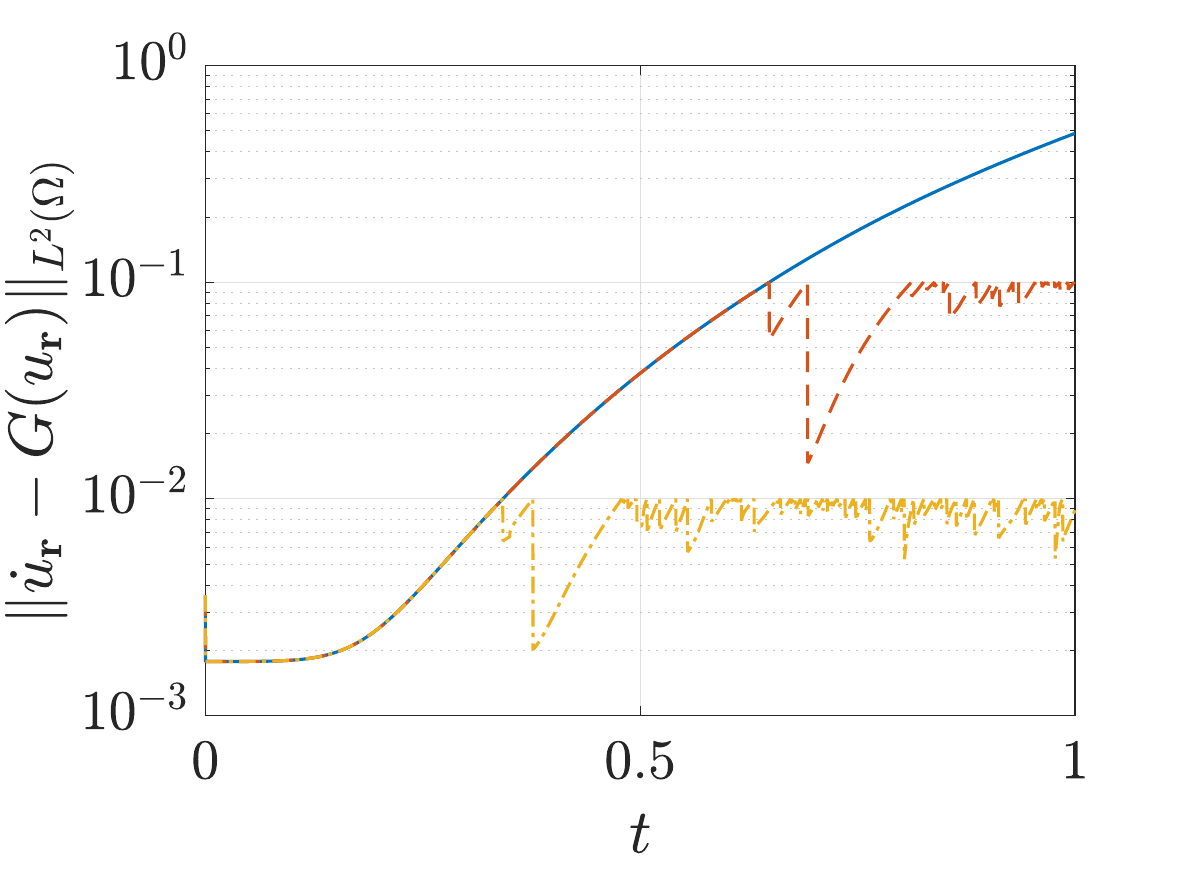}
		\includegraphics[width=0.33\textwidth]{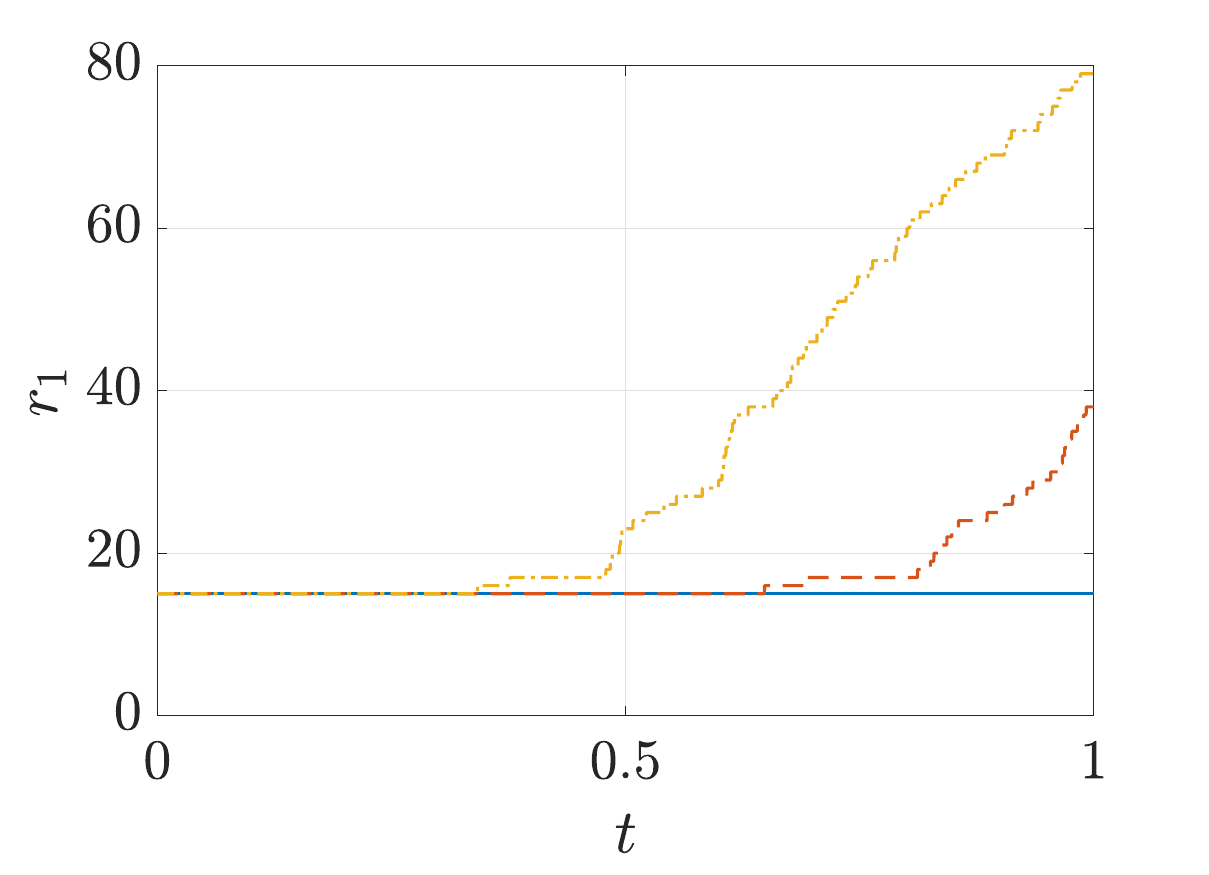}	\\
		\hspace{0.8cm}\includegraphics[scale=0.5]{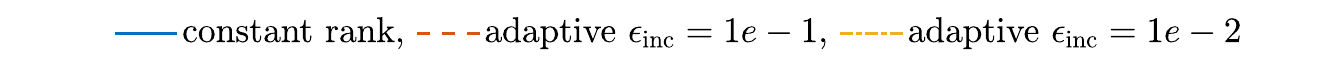}
\caption{
(a) Global $L^2(\Omega)$ error of the FTT solution 
${u_{\bm r}}$ relative to the benchmark solution 
$u_{\mathrm{ref}}$; 
(b) Norm of the two-point BDF approximation to the normal 
component $N_{{u_{\bm r}}}G({u_{\bm r}}(\bm x,t))$ 
(note the effect of thresholding); 
(c) Tensor rank versus time of the constant-rank FTT solution 
and adaptive rank solutions with 
$\epsilon_{\mathrm{inc}} = 10^{-1}$ and 
$\epsilon_{\mathrm{inc}} = 10^{-2}$.}
\label{fig:2d_advection_errors}
\end{figure}
We also solve the PDE \eqref{2d_advection} using the proposed 
rank-adaptive tensor method with first-order Lie-Trotter 
operator splitting and thresholding criterion 
\eqref{condition_for_rank_inc} with $\epsilon_{\text{inc}}=10^{-2}$. 
The initial condition is approximated by an FTT 
tensor ${u_{\bm r}}(x_1,x_2,0)$
with multivariate rank $\bm r = \begin{bmatrix} 1 & 15 & 1
\end{bmatrix}$ 
\begin{equation}
\label{2D_advection_TT_IC}
    {u_{\bm r}}(x_1,x_2,0) = \bm \Psi_1(x_1) 
    \sqrt{\bm \Lambda} \bm \Psi_2(x_2),
\end{equation}
where 
\begin{equation}
    \begin{aligned}
    \bm \Psi_1(x_1) = \begin{bmatrix} \psi_1(1;x_1;1) & \cdots & \psi_1(1;x_1;15) \end{bmatrix} , \quad 
    \sqrt{\bm \Lambda} = \begin{bmatrix} \sigma_1 &  &  \\
                             & \ddots &  \\
                             &  & \sigma_{15} 
                            \end{bmatrix}, \quad 
    \bm \Psi_2(x_2) = \begin{bmatrix} \psi_2(1;x_1;1) \\  \vdots \\ \psi_2(15;x_2;1) \end{bmatrix}.
    \end{aligned}
\end{equation}
Each tensor mode $\psi_i$ is discretized 
on a grid of $81$ evenly-spaced points in the interval 
$\Omega_i = [0,2\pi]$. One-dimensional Fourier pseudo-spectral 
quadrature rules and differentiation matrices 
\cite{spectral_methods_book} are used to 
compute inner products and derivatives when needed. 
We run three simulations with the initial 
tensor decomposition \eqref{2D_advection_TT_IC} 
and time step $\Delta t = 10^{-4}$. In the first simulation we do 
not use any rank adaptation, in the second simulation 
we set the normal vector threshold to 
$\epsilon_{\mathrm{inc}} = 10^{-1}$ and in the third 
simulation we set $\epsilon_{\mathrm{inc}} = 10^{-2}$.
At each time step the component of 
$G({u_{\bm r}}(\bm x,t_i))$
normal to the tensor manifold 
is approximated with the two-point
BDF formula (section \ref{sec:increase_rank}). 
In Figure \ref{fig:2d_advection_spectrum} we plot 
a few time snapshots of the singular values of 
the {rank-adaptive} FTT solution with 
$\epsilon_{\text{inc}} = 10^{-2}$.
\begin{figure}[t]
\centerline{\footnotesize\hspace{0.85cm} t = 0.0 \hspace{4.75cm} t = 0.5 \hspace{4.5cm} t = 1.0 }
	\centering
		\includegraphics[width=0.33\textwidth]{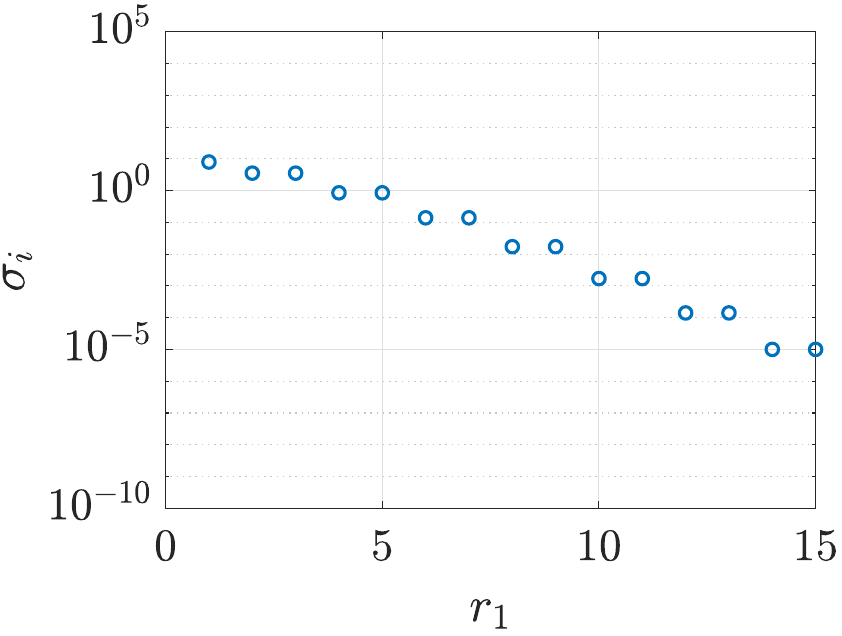}
		\includegraphics[width=0.33\textwidth]{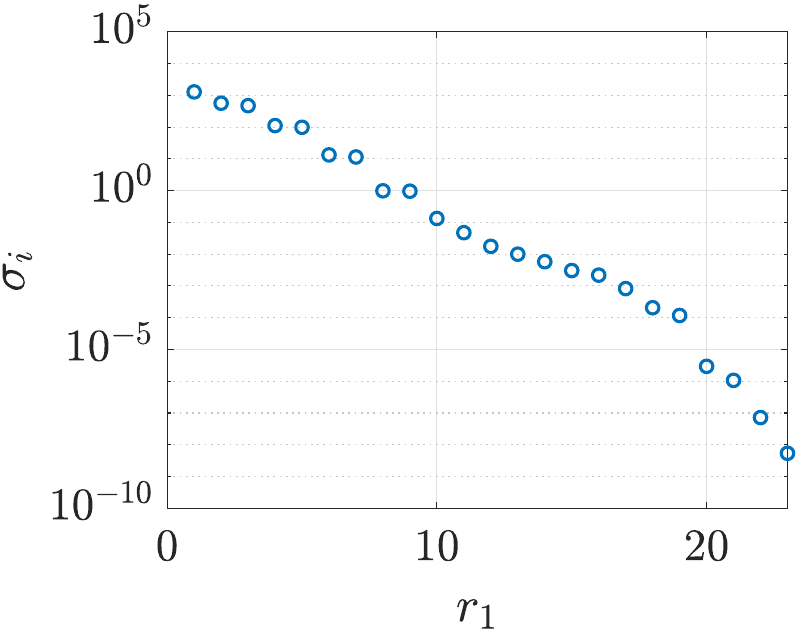}
		\includegraphics[width=0.33\textwidth]{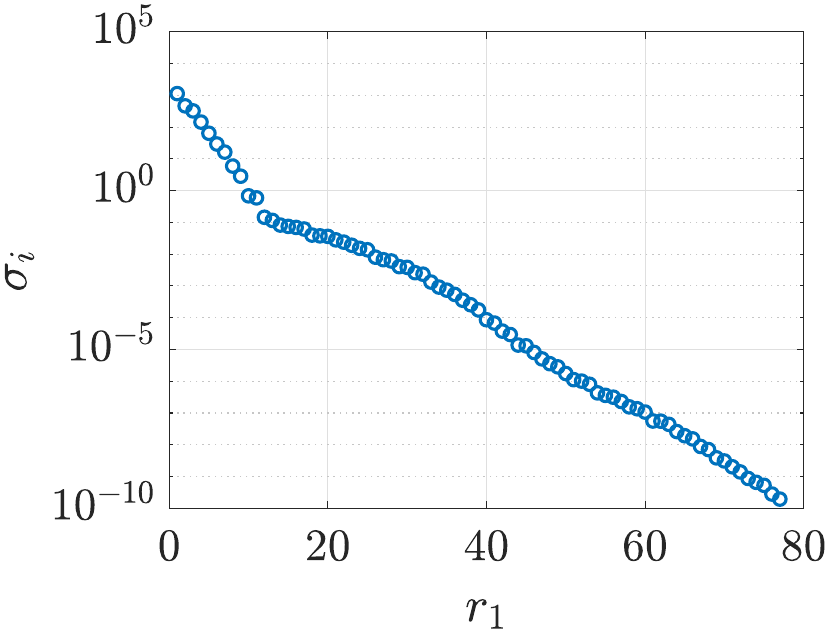}
\caption{
Time snapshots of the singular values of the rank-adaptive FTT solution 
with threshold $\epsilon_{\mathrm{inc}} = 10^{-2}$.}
\label{fig:2d_advection_spectrum}
\end{figure}
Figures \ref{fig:2d_advection_errors}(a)-(c) summarize 
the performance and accuracy of the proposed 
rank-adaptive FTT solver. 
In particular, in  Figure \ref{fig:2d_advection_errors}(a)
we plot the time-dependent $L^2(\Omega)$ error between 
the rank-adaptive FTT solution and the reference 
solution we obtained with method of characteristics. It is 
seen that decreasing the threshold $\epsilon_{\text{inc}}$ 
on the norm of the component of $G({u_{\bm r}})$ normal 
to the FTT tensor manifold  (Figure \ref{fig:2d_advection_errors}(b)) 
yields addition of more tensor mores to the FTT solution 
(Figure \ref{fig:2d_advection_errors}(c)). This, in turn, results 
in better accuracy as demonstrated in 
Figure \ref{fig:2d_advection_errors}(a).

\subsection{Two-dimensional Kuramoto-Sivashinsky equation}
In this section we demonstrate the rank-adaptive FTT 
integrator on the two-dimensional Kuramoto-Sivashinsky 
equation \cite{KSE}
\begin{equation}
    \label{transformed_KS}
    \begin{cases}
    \displaystyle\frac{\partial}{\partial t}u(x_1,x_2,t) + \frac{1}{2} |\nabla_{\nu} u(x_1,x_2,t)|^2 + \Delta_{\nu} u(x_1,x_2,t) + \nu_1 \Delta_{\nu}^2 u(x_1,x_2,t) = 0,\vs\\
    u(x_1,x_2,0) = \sin(x_1 + x_2) + \sin(x_1) + \sin(x_2),
    \end{cases}
\end{equation}
where 
\begin{equation}
\label{transformed_operators}
\nabla_{\nu} = \left( \frac{\partial}{\partial x_1}, 
{\frac{\nu_2}{\nu_1}}
\frac{\partial}{\partial x_2} \right), \qquad \Delta_{\nu} 
= \frac{\partial^2 }{\partial x_1^2} + 
{\frac{\nu_2}{\nu_1}} 
\frac{\partial^2}{\partial x_2^2}.
\end{equation}
Here, $\nu_1$, $\nu_2$ are bifurcation parameters. 
For our demonstration we set $\nu_1 = 0.25$, $\nu_2 = 0.04$ and solve 
\eqref{transformed_KS} on the two-dimensional 
flat torus $\mathbb{T}^2$. 
{
The initial condition can be written as rank $\bm r = \begin{bmatrix}
1 & 2 & 1
\end{bmatrix}$ FTT tensor 
\begin{equation}
u_0(x_1,x_2) = \psi_1(1;x_1;1) \psi_2(1;x_2;1) \sqrt{\lambda(1)} + \psi_1(1;x_1;2) \psi_2(2;x_2;1) \sqrt{\lambda(2)},
\end{equation}
where 
\begin{equation}
\begin{aligned}
\psi_1(1;x_1;1) &= \frac{\sin(x_1)}{\sqrt{\pi}} ,\qquad\quad\hspace*{.25cm} \psi_1(1;x_1;2) = \frac{\cos(x_1) + 1}{ \sqrt{3\pi}} , \\
\psi_2(1;x_2;1) &= \frac{\cos(x_2)+1}{\sqrt{3\pi}} ,\qquad \psi_2(2;x_2;1) = \frac{\sin(x_2)}{\sqrt{\pi}} ,
\end{aligned}
\end{equation}
and
\begin{equation}
\sqrt{\lambda(1)} = \sqrt{\lambda(2)} = \sqrt{3}\pi.
\end{equation}
}
\begin{figure}[t]
\hspace{0.2cm}
\centerline{\footnotesize\hspace{-0.2cm}$t = 0.0$ \hspace{4.8cm} $t = 2.5$  \hspace{4.9cm} $t = 5.0$ }

\hspace{0.1cm} 
\centerline{
	\rotatebox{90}{\hspace{1.5cm}\footnotesize Adaptive FTT }
		\includegraphics[width=0.32\textwidth]{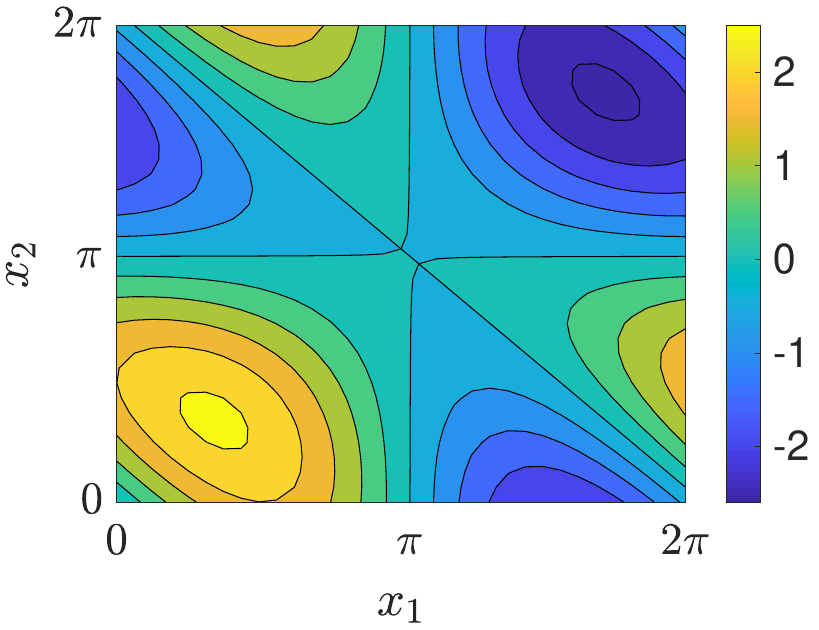}
		\hspace{.5cm}
		\includegraphics[width=0.32\textwidth]{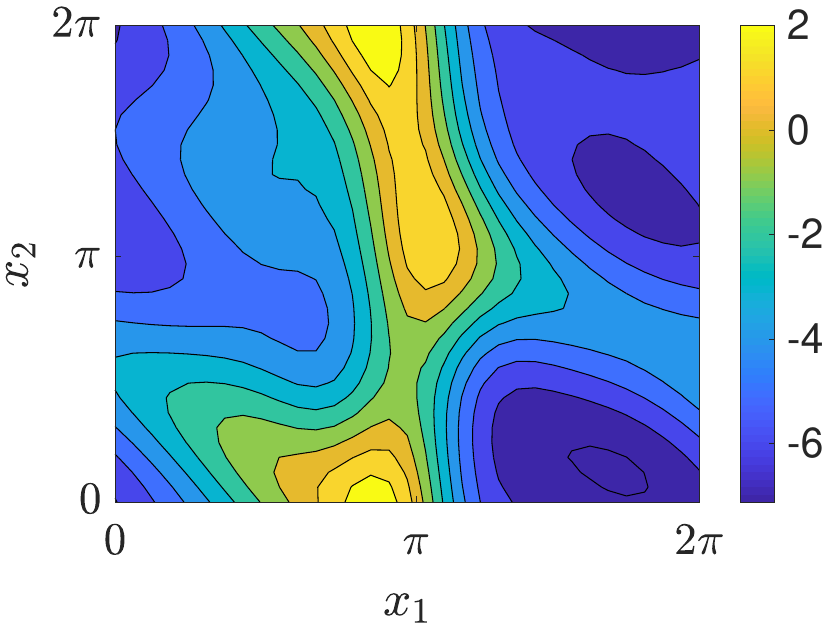}
		\hspace{.5cm}
		\includegraphics[width=0.32\textwidth]{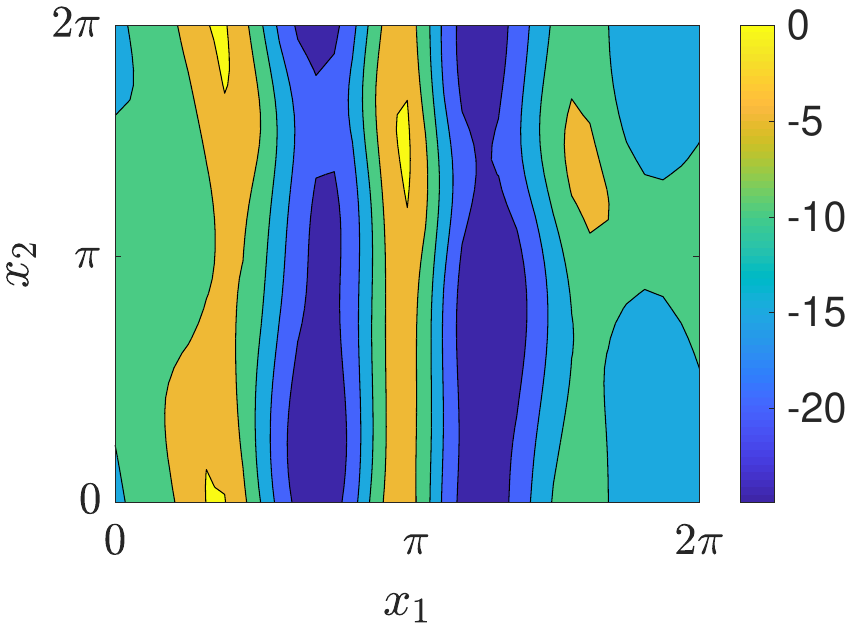}	
		\hspace{.2cm}
}
	
\hspace{0.1cm}	
\centerline{
\rotatebox{90}{\hspace{1.0cm}  \footnotesize  Full tensor product}
       \includegraphics[width=0.32\textwidth]{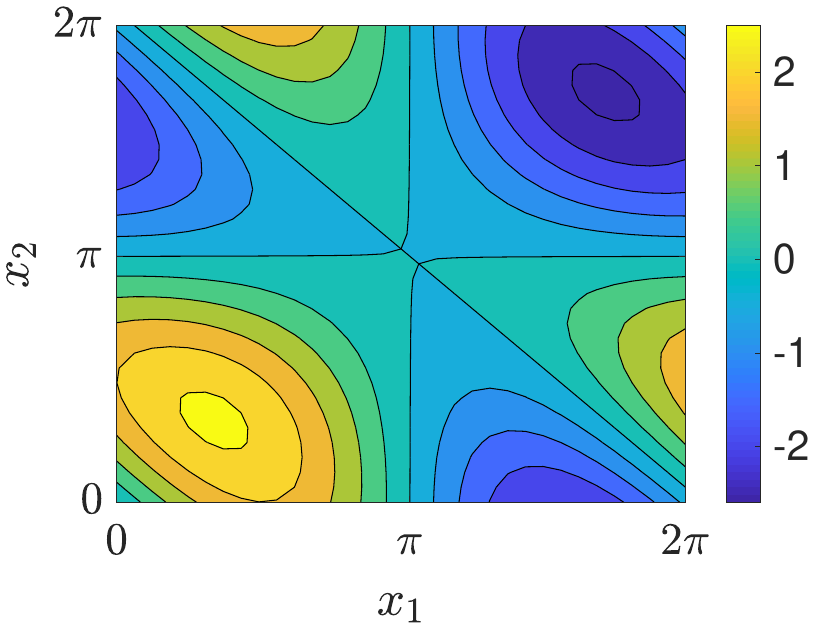}
       \hspace{.5cm}
	    \includegraphics[width=0.32\textwidth]{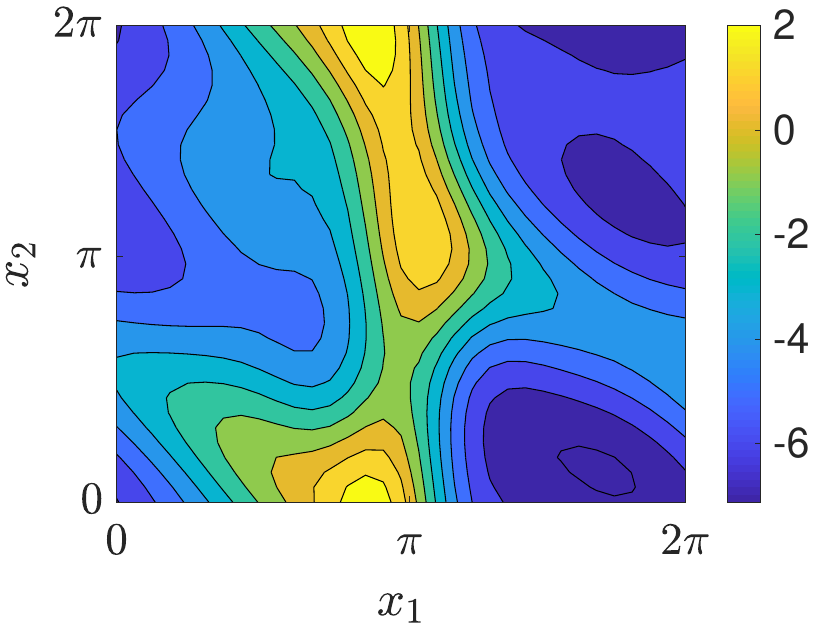}
	    \hspace{.5cm}
	    \includegraphics[width=0.32\textwidth]{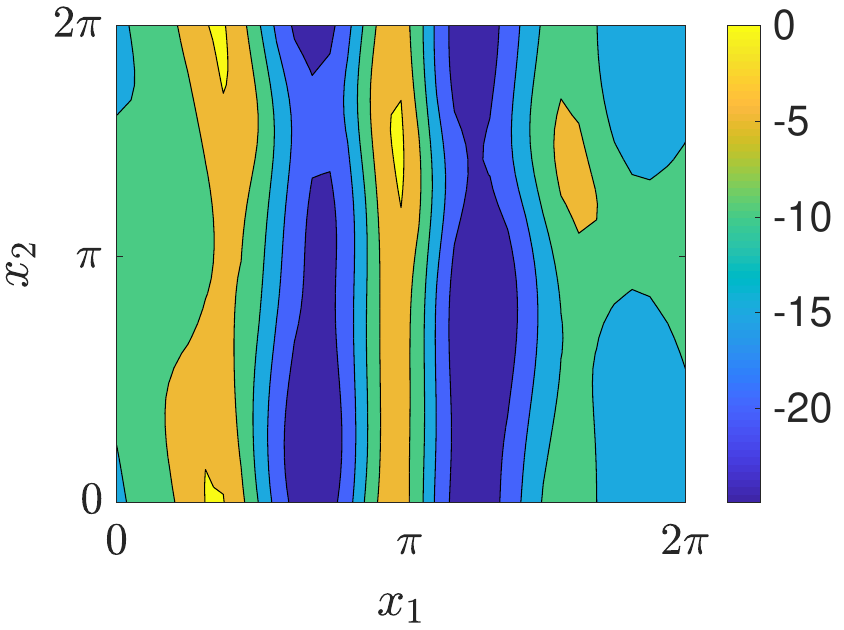}
	    \hspace{.2cm}
}

\hspace{0.cm}	
\centerline{
	\rotatebox{90}{\hspace{1.2cm}  \footnotesize Pointwise error}
  \hspace{0.0cm}     \includegraphics[width=0.34\textwidth,height=4.4cm]{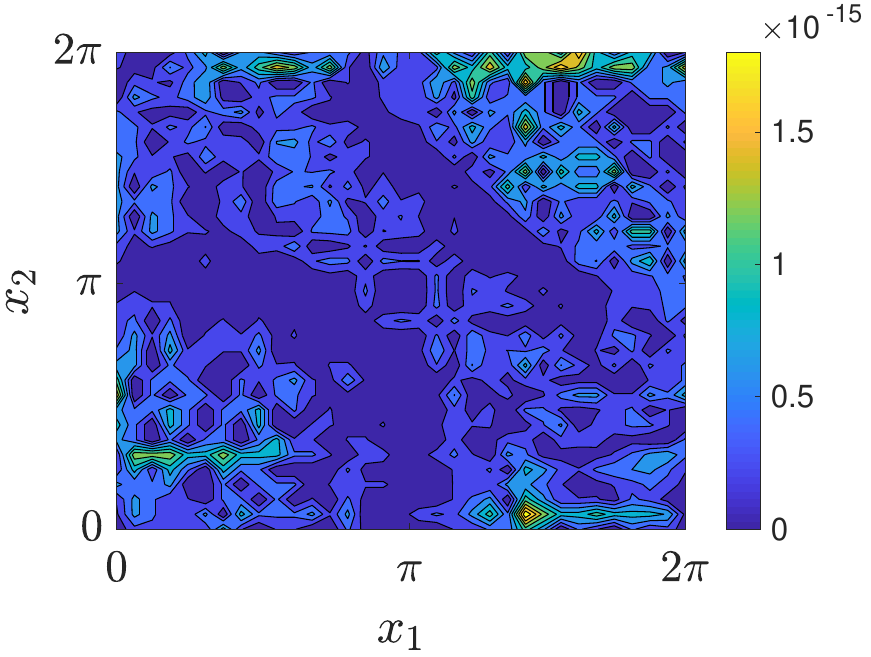}\hspace{0.2cm}
	    \includegraphics[width=0.34\textwidth,height=4.4cm]{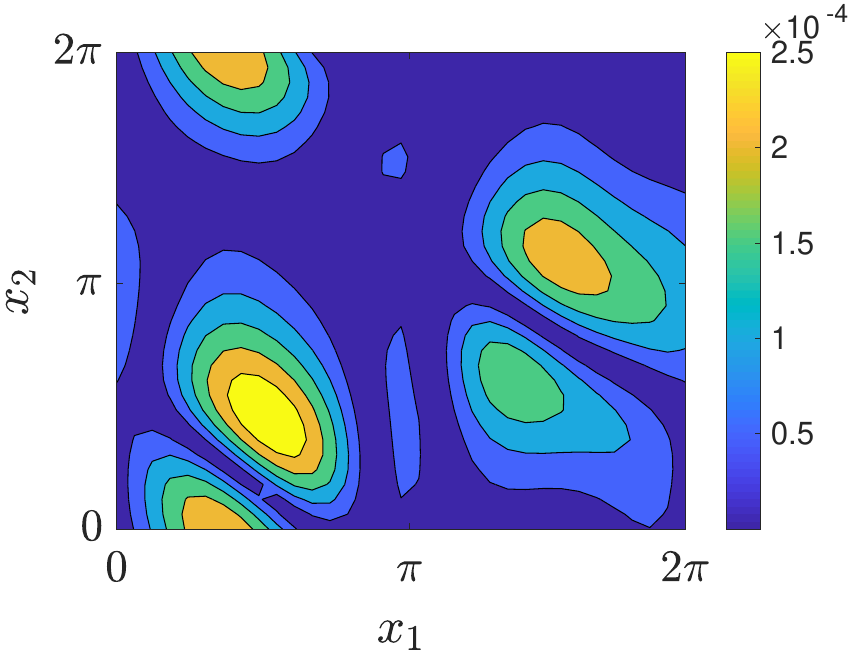}\hspace{0.25cm}
	    \includegraphics[width=0.33\textwidth,height=4.4cm]{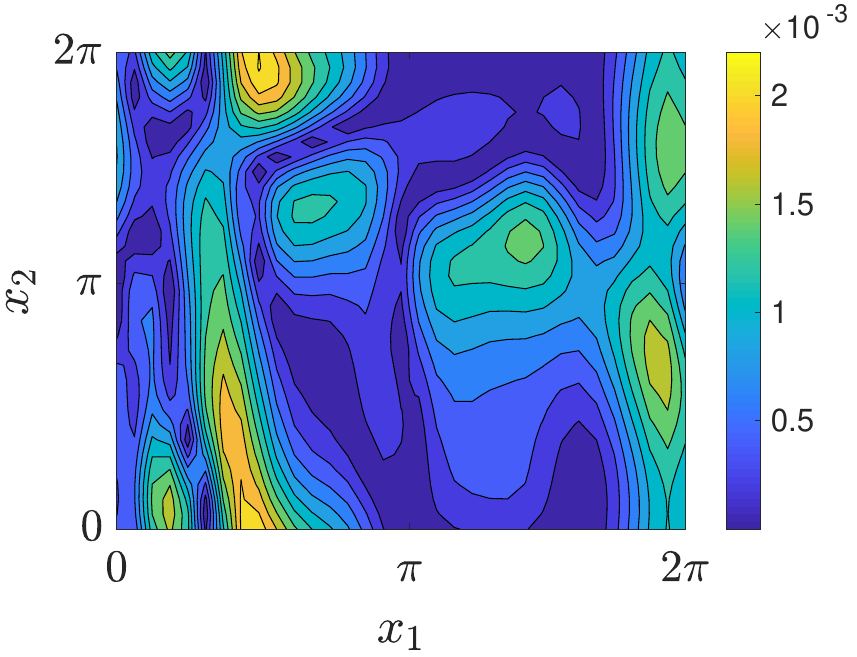}
}
\caption{Kuramoto-Sivashinsky equation \eqref{transformed_KS}. 
Time snapshots of the rank-adaptive FTT solution ${u_{\bm r}}(x_1,x_2,t)$ obtained with threshold $\epsilon_{\mathrm{inc}} = 10^{-2}$ 
(top), the Fourier pseudo-spectral solution $u_{\mathrm{ref}}(x_1,x_2,t)$ (middle), and their pointwise error between the two solutions (bottom).}
\label{fig:2D_KSE_time_evolution}
\end{figure}
We compute a benchmark solution by using 
a Fourier pseudo-spectral method \cite{spectral_methods_book} 
{
with $33$ evenly-spaced grid points 
per spatial dimension (1089 total number of points)}. 
Derivatives and integrals are approximated with well-known 
pseudo-spectral differentiation matrices and Gauss quadrature 
rules. The resulting ODE system is integrated 
forward in time using an explicit fourth-order 
Runge-Kutta method with time step $\Delta t = 10^{-5}$. 

As before, we performed multiple simulations using the proposed 
rank-adaptive FTT algorithm with different thresholds for the
component of $G({u_{\bm r}})$ normal to the 
tensor manifold. Specifically, we ran one simulation with 
no mode addition and three simulations with adaptive 
mode addition based on Algorithm 1, and thresholds 
set to $\epsilon_{\mathrm{inc}} = 10$, 
$\epsilon_{\mathrm{inc}} = 10^{-1}$, and 
$\epsilon_{\mathrm{inc}} = 10^{-2}$.
We used the two-point BDF formula 
\eqref{backwards_fd_1st_order} to approximate 
the component of the solution normal to the 
tensor manifold at each time step and the Lie-Trotter 
operator splitting scheme \eqref{split_diff_eqs} with 
time step $\Delta t = 10^{-5}$ to integrate in time 
the rank-adaptive FTT solution.
In Figure \ref{fig:2D_KSE_time_evolution} we compare 
the time snapshots of the rank-adaptive FTT solution 
with $\epsilon_{\mathrm{inc}} = 10^{-2}$ with 
the benchmark solution obtained by the Fourier 
pseudo-spectral method. 
As before, Figures \ref{fig:2d_KSE_errors}(a)-(c) demonstrate 
that the rank-adaptive FTT algorithm is effective in controlling 
the $L^2(\Omega)$ error of the FTT solution. Interestingly, 
the solution to the PDE \eqref{transformed_KS} has the property 
that any tensor approximation with sufficient rank yields a 
normal component that does not grow in time. In fact, as seen 
in Figure   \ref{fig:2d_KSE_errors}(b) the tensor rank becomes 
constant for each threshold $\epsilon_{\text{inc}}$ after 
a transient of approximately 0.5 dimensionless time units. 

{
In Figure \ref{fig:2d_KSE_errors} we observe that the error associated with the constant rank 2 FTT solution increases 
significantly during temporal integration. This suggests that projecting the nonlinear Kuramoto-Sivashinsky 
equation \eqref{transformed_KS} onto a rank 2 FTT manifold yields a reduced-order PDE 
which does not accurately capture the dynamics of the full system. 
A similar phenomenon occurs in other areas of reduced-order modeling, 
e.g., when projecting nonlinear PDEs onto proper orthogonal decomposition (POD) bases \cite{Sirisup}.
}

\begin{figure}[t]
\centerline{\footnotesize\hspace{0.45cm} (a) \hspace{5cm} (b) \hspace{4.8cm} (c) }
	\centering
		\includegraphics[width=0.33\textwidth]{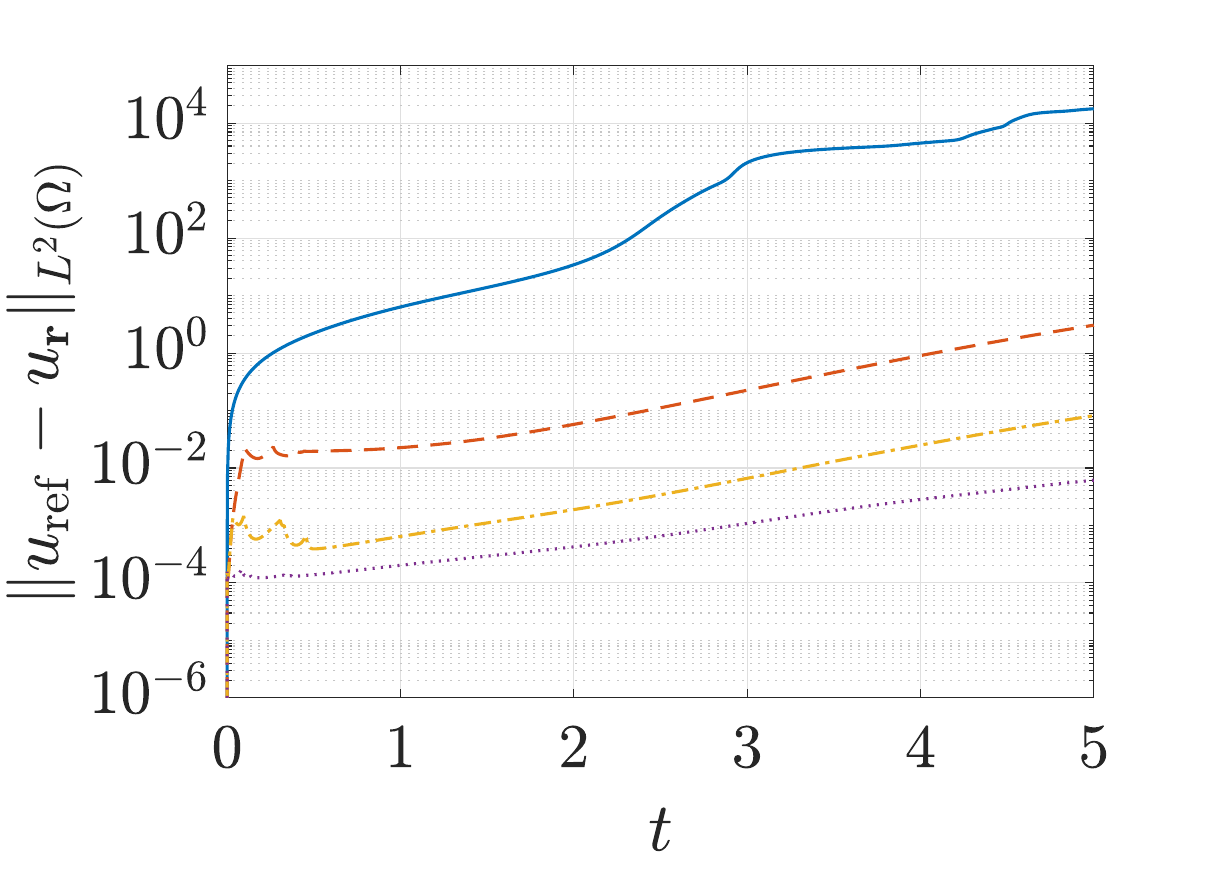}
		\includegraphics[width=0.33\textwidth]{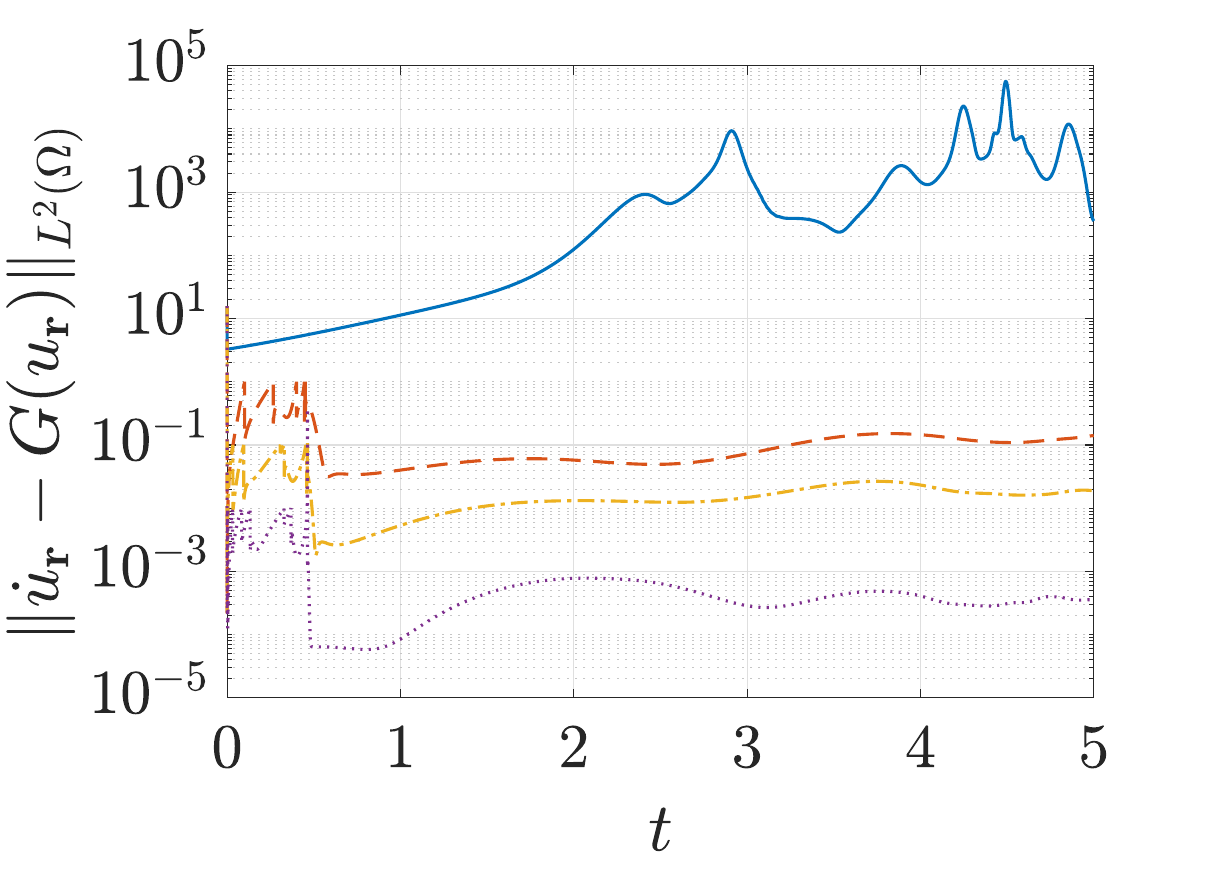}
		\includegraphics[width=0.32\textwidth]{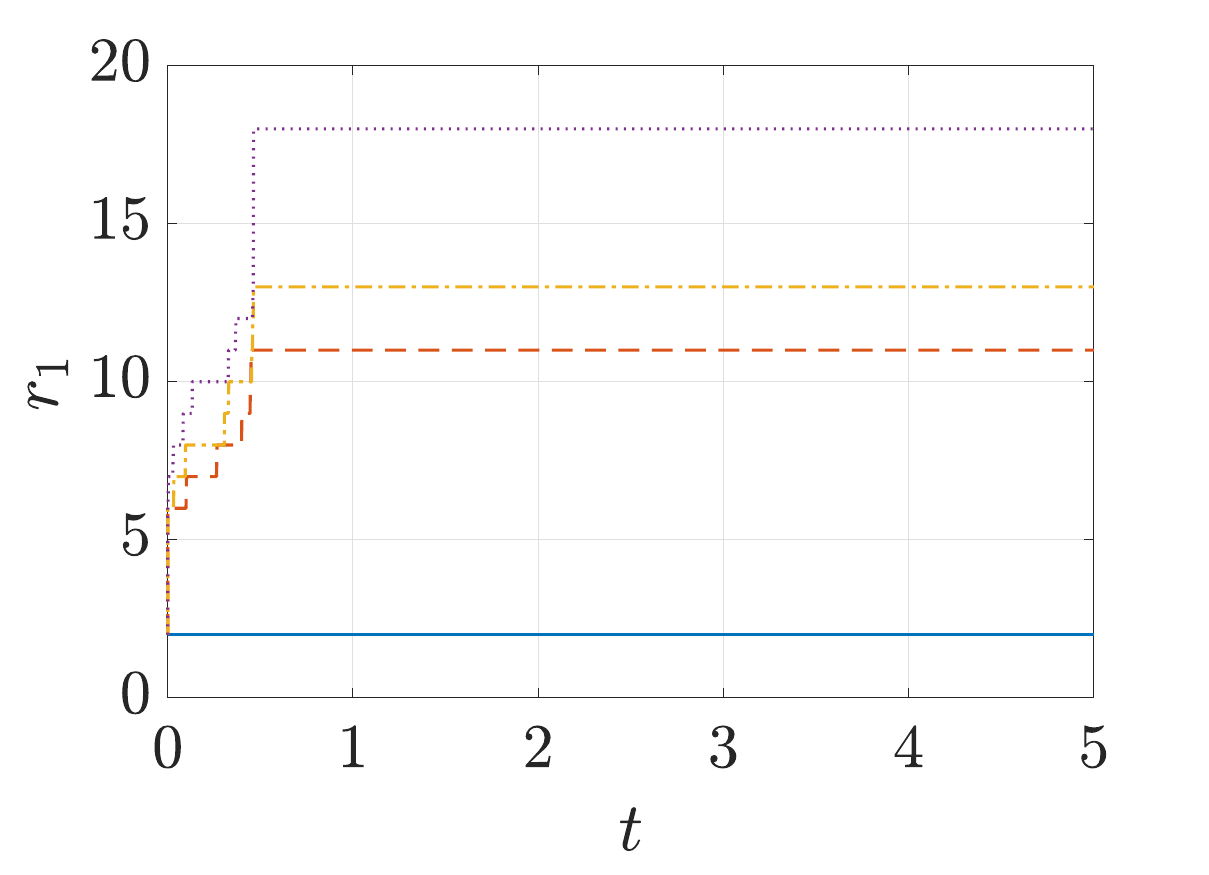}	\\
		\hspace{0.8cm}\includegraphics[scale=0.5]{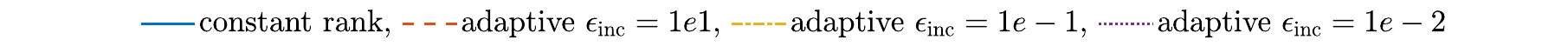}
\caption{
(a) Global $L^2(\Omega)$ error between the FTT solution 
${u_{\bm r}}$ to equation \eqref{transformed_KS} 
and the benchmark solution $u_{\mathrm{ref}}$. 
(b) Norm of the approximation to $N_{{u_{\bm r}}}(G({u_{\bm r}})) =  {\dot{u}_{\bm r}} - G({u_{\bm r}})$ where the tangent space projection is 
computed with a two-point BDF formula at each time. 
(c) Rank versus time of the constant rank FTT solution and rank-adaptive FTT solutions with $\epsilon_{\mathrm{inc}} = 10,10^{-1},10^{-2}$.}
\label{fig:2d_KSE_errors}
\end{figure}

\subsection{Four-dimensional Fokker-Planck equation}
Finally, we demonstrate the 
proposed  rank-adaptive FTT integrator 
on a four-dimensional Fokker--Planck equation with 
non-constant drift and diffusion coefficients. 
As is well known \cite{Risken},
the Fokker--Planck equation describes the evolution 
of the probability density function (PDF) of the state vector
solving the It\^o stochastic differential equation (SDE)
\begin{equation}
\label{Ito_SDE}
d \bm X_t = \bm \mu(\bm X_t,t)dt + \bm \sigma(\bm X_t,t)d \bm W_t.
\end{equation}
Here, $\bm X_t$ is the $d$-dimensional state vector, $\bm \mu(\bm X_t,t)$ is the $d$-dimensional drift, $\bm \sigma(\bm X_t,t)$ is an $d \times m$ 
matrix and $\bm W_t$ is an $m$-dimensional standard 
Wiener process. The Fokker--Planck equation that 
corresponds to \eqref{Ito_SDE} has the form 
\begin{equation}
\label{Fokker_Planck}
\begin{cases}
\displaystyle\frac{\partial p(\bm x,t)}{\partial t} = \mathcal{L}(\bm x,t) p(\bm x,t),\vs  \\
p(\bm x,0) = p_0(\bm x),
\end{cases}
\end{equation}
where $p_0(\bm x)$ is the PDF of the initial state $\bm X_0$,
$\mathcal{L}$ is a second-order linear differential operator 
defined as
\begin{equation}
\mathcal{L}(\bm x,t)p(\bm x,t) = -\sum_{k=1}^d 
\frac{\partial }{\partial x_k}\left( \mu_k(x,t)p(\bm x,t)\right)+ 
\sum_{k,j=1}^d \frac{\partial^2}{\partial x_k\partial x_j}
\left(  D_{ij}(\bm x,t)p(\bm x,t)\right),
\label{L}
\end{equation}
and $\bm D(\bm x,t)=\bm \sigma(\bm x,t) \bm \sigma(\bm x,t)^{\top}/2$ is the diffusion 
tensor. 
For our numerical demonstration we set 
\begin{equation}
\label{drift_diffusion}
\bm \mu(\bm x) = \alpha \begin{bmatrix}
\sin(x_1) \\
\sin(x_3) \\
\sin(x_4) \\
\sin(x_1)
\end{bmatrix}, \qquad
\bm \sigma(\bm x) = \sqrt{2 \beta} \begin{bmatrix}
g(x_2) & 0 & 0 & 0 \\
0 & g(x_3) & 0 & 0 \\
0 & 0 & g(x_4) & 0 \\
0 & 0 & 0 & g(x_1) \\ 
\end{bmatrix},
\end{equation}
where $g(x)=\sqrt{1 + k \sin(x)}$. 
With the drift and diffusion matrices 
chosen in \eqref{drift_diffusion} the 
operator \eqref{L} takes the form  
\begin{equation}
\label{Fokker_Planck_op}
\begin{aligned}
\mathcal{L} =& -\alpha \left(\cos(x_1) + \sin(x_1) \frac{\partial }{\partial x_1} + \sin(x_3) \frac{\partial }{\partial x_2} + \sin(x_4) \frac{\partial }{\partial x_3} + \sin(x_1) \frac{\partial }{\partial x_4} \right) \\
&+ \beta \left( (1 + k \sin(x_2) ) \frac{\partial^2 }{\partial x_1^2} 
+ (1 + k \sin(x_3) ) \frac{\partial^2 }{\partial x_2^2} + 
(1 + k \sin(x_4) ) \frac{\partial^2 }{\partial x_3^2} + 
(1 + k \sin(x_1) ) \frac{\partial^2 }{\partial x_4^2} \right).
\end{aligned}
\end{equation}
Clearly $\mathcal{L}$ is a linear, time-independent separable operator of 
rank $9$, since it can be written as
\begin{equation}
\label{separable_operator}
\mathcal{L} = \sum_{i=1}^9 L_i^{(1)} \otimes L_i^{(2)} 
\otimes L_i^{(3)} \otimes L_i^{(4)} ,
\end{equation}
where each $L_i^{(j)}$ operates on $x_j$ 
only. Specifically, we have  
\begin{equation}
\begin{array}{llll}
L_1^{(1)} = -\alpha \cos(x_1),  &\displaystyle L_2^{(1)} = -\alpha \sin(x_1) \frac{\partial}{\partial x_1},  & \displaystyle L_3^{(2)} = -\alpha \frac{\partial}{\partial x_2},  
&L_3^{(3)} = \sin(x_3), \vspace{0.1cm}\\
L_4^{(3)} =\displaystyle  -\alpha \frac{\partial}{\partial x_3},  
&L_4^{(4)} = \sin(x_4),  &L_5^{(1)} = -\alpha \sin(x_1),  &L_5^{(4)} = \displaystyle \frac{\partial}{\partial x_4}, 
\vspace{0.1cm}\\ 
L_6^{(1)} = \displaystyle \beta \frac{\partial^2}{\partial x_1^2}, & L_6^{(2)} = 1+k\sin(x_2),  &L_7^{(2)} = \displaystyle \beta \frac{\partial^2}{\partial x_2^2}, & L_7^{(3)} = 1+k\sin(x_3), 
\vspace{0.1cm}\\
L_8^{(3)} = \displaystyle \beta \frac{\partial^2}{\partial x_3^2},  
&L_8^{(2)} = 1+k\sin(x_4),  &L_9^{(4)} = \displaystyle \beta \frac{\partial^2}{\partial x_4^2},  &L_9^{(1)} = 1+k\sin(x_1),
\end{array}
\end{equation}
and all other unspecified $L_i^{(j)}$ are identity 
operators. We set the parameters 
in \eqref{drift_diffusion} as 
$\alpha = 0.1$, $\beta = 2.0$, $k = 1.0$ and 
solve \eqref{Fokker_Planck} on the four-dimensional flat
torus $\mathbb{T}^4$.
The initial PDF is set as 
{
\begin{equation}
\label{PDF_IC}
p_0(\bm x) =\frac{\sin(x_1) \sin(x_2) \sin(x_3) \sin(x_4) + 1 }{16\pi^4}.
\end{equation}
Note that \eqref{PDF_IC} is a four-dimensional FTT tensor with multilinear rank 
$\bm r = \begin{bmatrix} 1  & 2 & 2 & 2 & 1 \end{bmatrix}$.
Upon normalizing the modes appropriately we obtain the left orthogonalized 
initial condition required to begin integration
\begin{equation}
\begin{aligned}
p_0(\bm x) &= \psi_1(1;x_1;1) \psi_2(1;x_2;1) \psi_3(1;x_3;1) \psi_4(1;x_4;1) \sqrt{\lambda(1)}  \\
&+\psi_1(1;x_1;2) \psi_2(2;x_2;2) \psi_3(2;x_3;2) \psi_4(2;x_4;1) \sqrt{\lambda(2)},
\end{aligned}
\end{equation}
where 
\begin{equation}
\begin{aligned}
\psi_i(1;x_i;1) = \frac{\sin(x_i)}{\sqrt{\pi}}, \qquad \sqrt{\lambda(1)} = \frac{1}{16 \pi^2}.
\end{aligned}
\end{equation}
All other tensor modes are equal to $1/\sqrt{2 \pi}$, and $\sqrt{\lambda(2)} = 1/(2\pi^2)$.
}
\begin{figure}[t]
\hspace{0.2cm}
\centerline{\footnotesize\hspace{-0.2cm}$t = 0.01$ \hspace{4.6cm} $t = 0.5$  \hspace{4.5cm} $t = 1.0$ }

\hspace{0.1cm} 
\centerline{
	\rotatebox{90}{\hspace{1.5cm}\footnotesize Adaptive FTT }
		\includegraphics[width=0.32\textwidth]{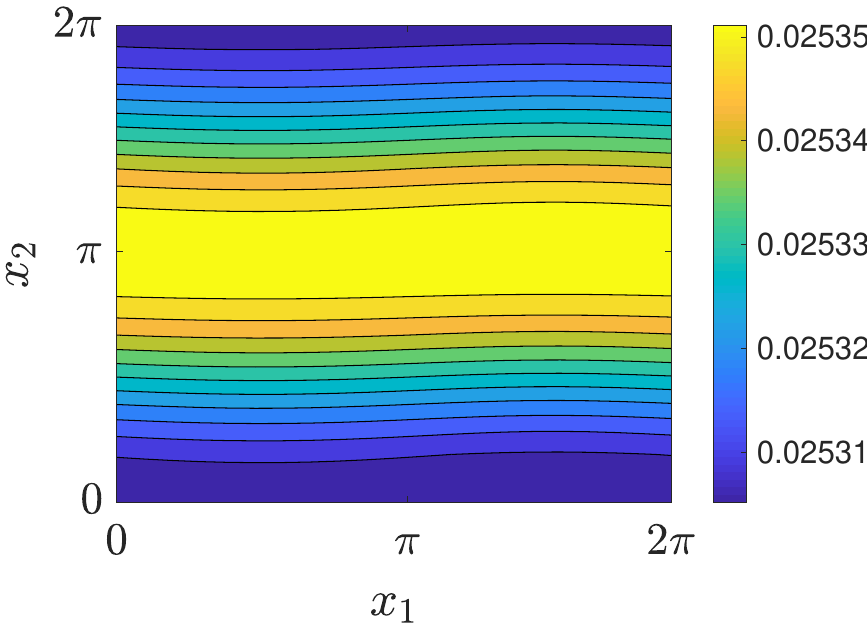}
		\hspace{.3cm}
		\includegraphics[width=0.32\textwidth]{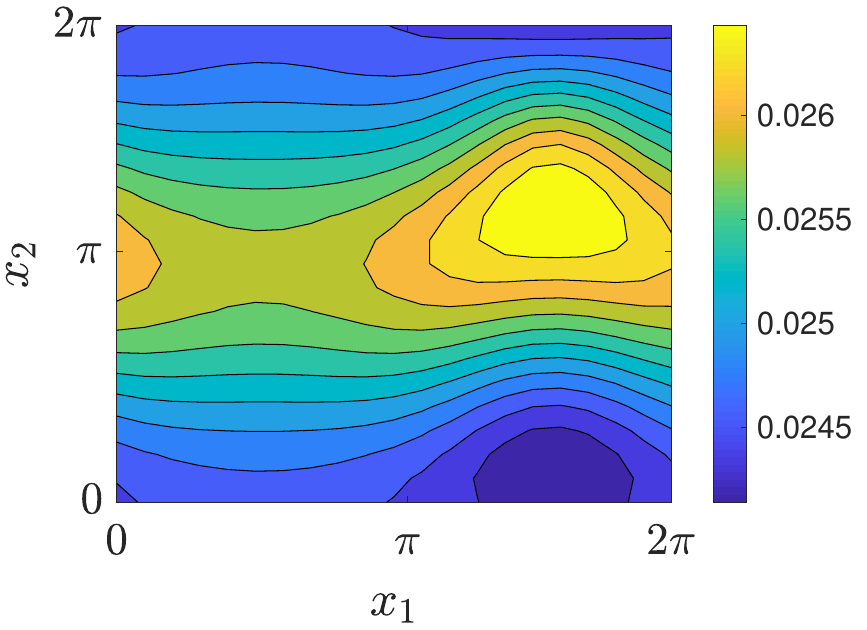}
		\hspace{.3cm}
		\includegraphics[width=0.32\textwidth]{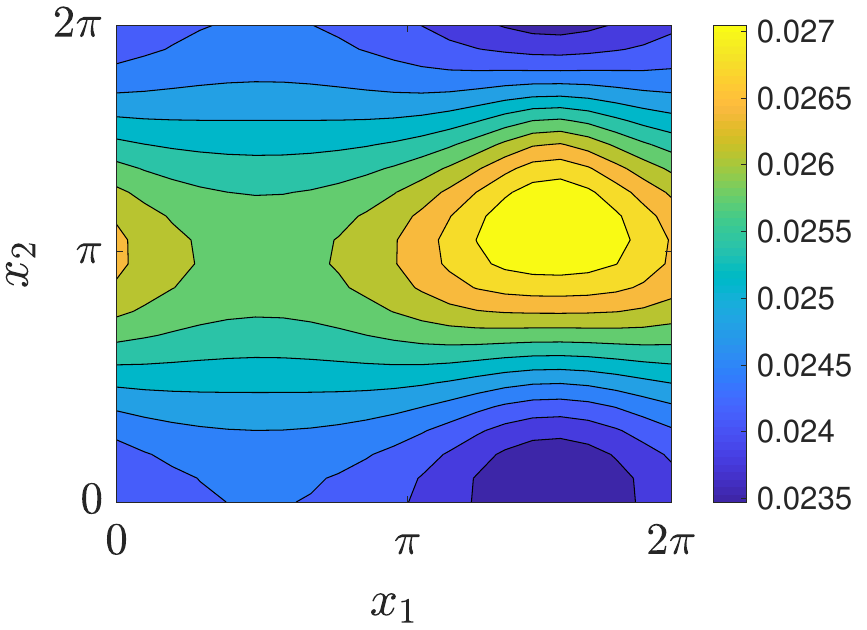}	
		\hspace{.2cm}
}
	
\hspace{0.1cm}	
\centerline{
\rotatebox{90}{\hspace{1.0cm}  \footnotesize  Full tensor product}
       \includegraphics[width=0.32\textwidth]{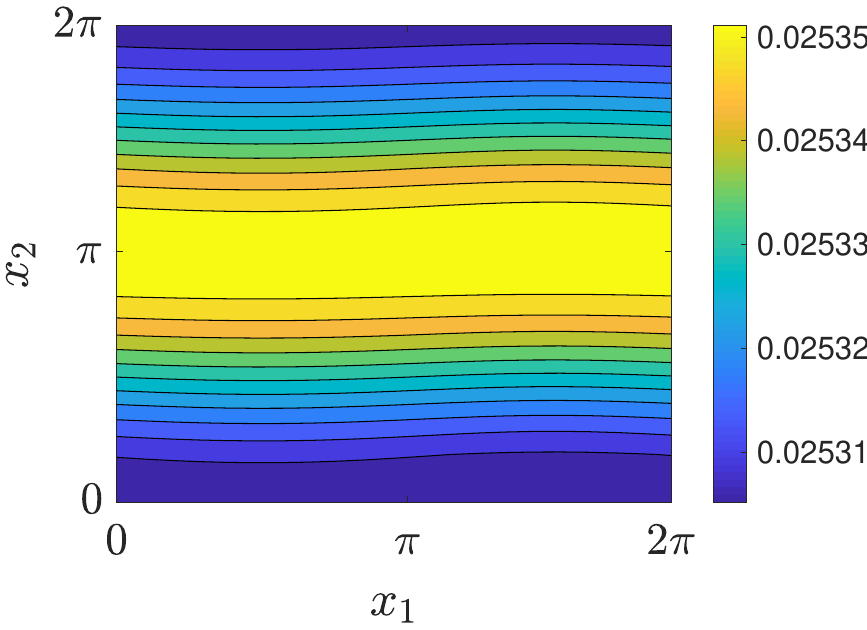}
       \hspace{.3cm}
	    \includegraphics[width=0.32\textwidth]{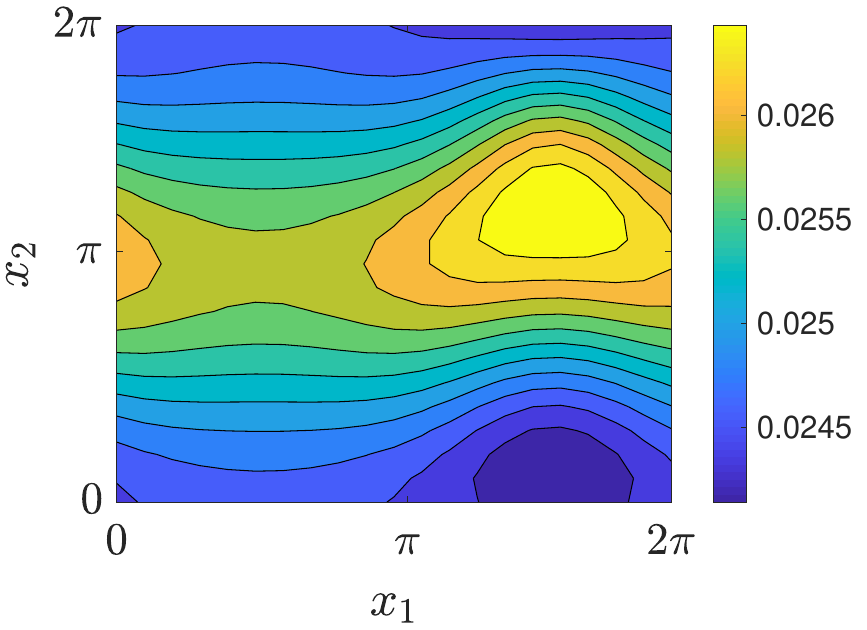}
	    \hspace{.3cm}
	    \includegraphics[width=0.32\textwidth]{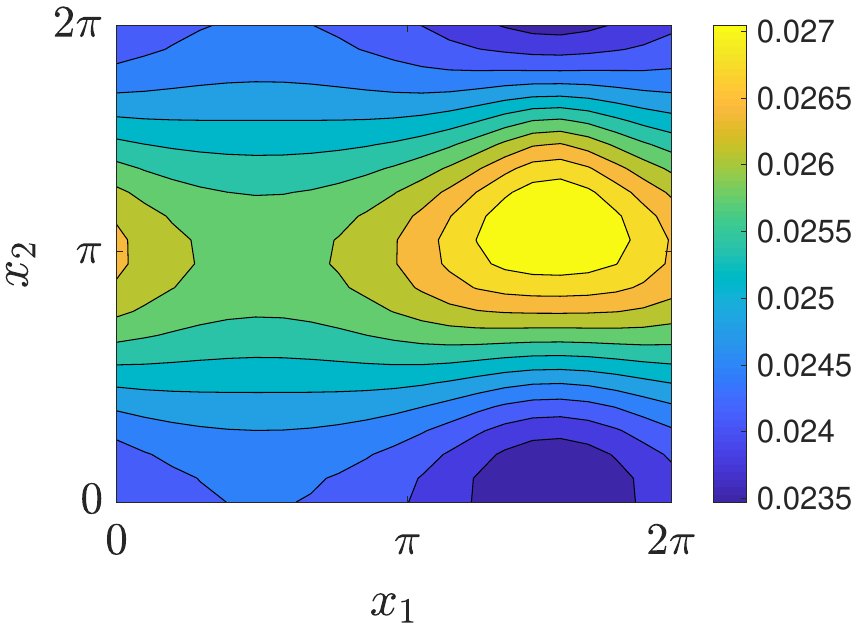}
	    \hspace{.2cm}
}

\hspace{0.0cm}	
\centerline{\rotatebox{90}{\hspace{1.2cm}  \footnotesize Pointwise error}
\hspace{0.1cm}
\includegraphics[width=0.33\textwidth,height=4.2cm]{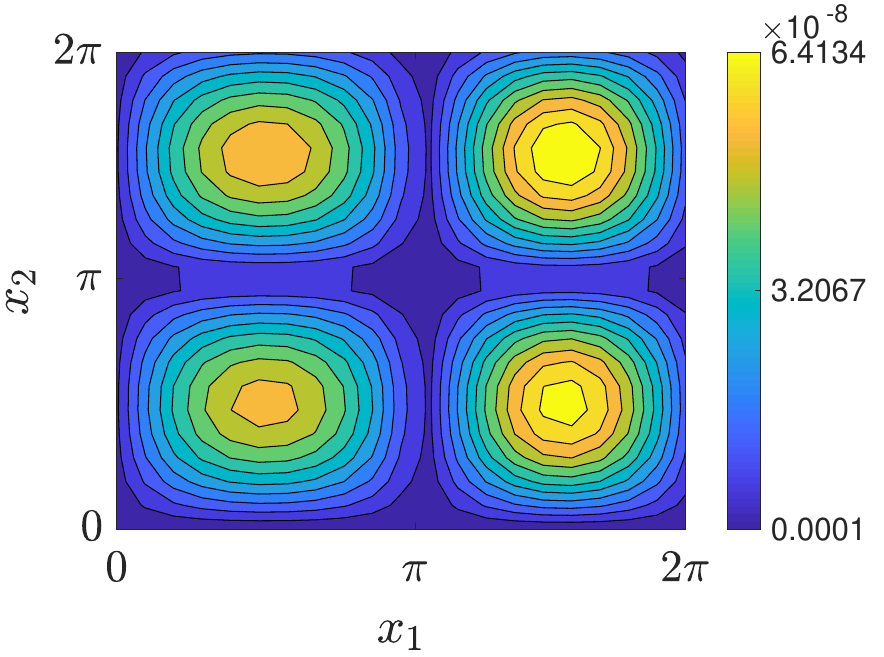}\hspace{0.2cm}
	    \includegraphics[width=0.33\textwidth,height=4.2cm]{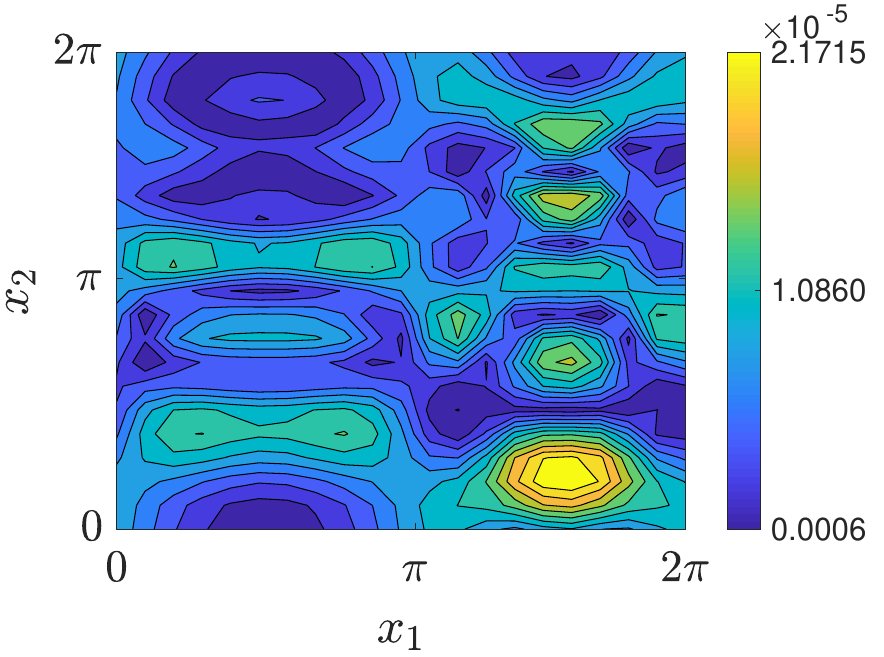}\hspace{0.2cm}
	    \includegraphics[width=0.33\textwidth,height=4.2cm]{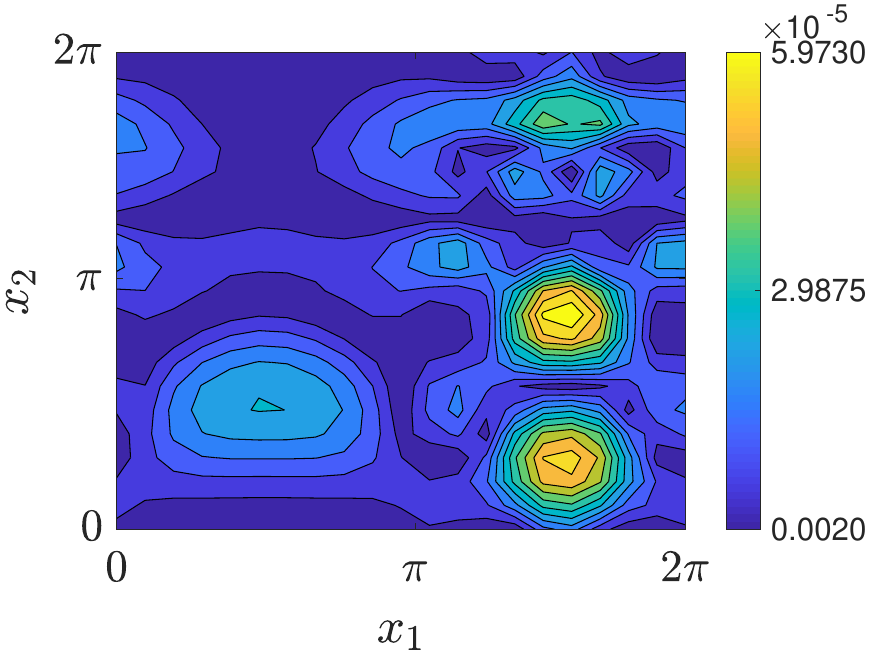}
}
\caption{Time snapshots of marginal PDF $p_{\bm r}(x_1,x_2,t)$  corresponding to the solution to the Fokker-Planck equation \eqref{Fokker_Planck}. 
We plot marginals computed with the rank-adaptive FTT integrator using 
$\epsilon_{\mathrm{inc}} = 10^{-4}$ (top row) and with 
the full tensor product Fourier pseudo-spectral method (middle row). 
We also plot the pointwise error between the two numerical solutions (bottom row). 
The initial condition is the FTT tensor \eqref{PDF_IC}.}
\label{fig:FP_solution_time_evolution}
\end{figure}
To obtain a benchmark solution with which to 
compare the rank-adaptive FTT solution, 
we solve the PDE \eqref{Fokker_Planck} 
using a Fourier pseudo-spectral method 
on the flat torus $\mathbb{T}^4$ with $21^4=194481$ 
evenly-spaced points. As before, the 
operator $\mathcal{L}$ is represented in terms 
of pseudo-spectral differentiation 
matrices \cite{spectral_methods_book}, 
and the resulting semi-discrete approximation 
(ODE system) is integrated with an explicit 
fourth-order Runge Kutta method 
using time step $\Delta t = 10^{-4}$. 
The numerical solution we obtained in this 
way is denoted by $p_{\text{ref}}(\bm x,t)$.  
We also solve the Fokker-Planck using the proposed 
rank-adaptive FTT method with first-order Lie-Trotter 
time integrator (section \ref{sec:splitting_integrator}) 
and normal vector thresholding (section \ref{sec:increase_rank}). 
We run three simulations all with time step 
$\Delta t = 10^{-4}$: one with no rank adaption, and two 
with rank-adaptation and normal component thresholds set 
to $\epsilon_{\mathrm{inc}} = 10^{-3}$ and 
$\epsilon_{\mathrm{inc}} = 10^{-4}$. 
In Figure \ref{fig:FP_solution_time_evolution} we 
plot three time snapshots of the two-dimensional 
solution marginal
\begin{equation}
\label{marg_pdf}
p(x_1,x_2,t) = \int_{0}^{2\pi} 
\int_{0}^{2 \pi} p(x_1,x_2,x_3,x_4,t) dx_3 dx_4
\end{equation}
computed with the rank-adaptive FTT integrator 
($\epsilon_{\mathrm{inc}} = 10^{-4}$) and the full 
tensor product pseudo-spectral method (reference solution). 
{In Figure \ref{fig:FP_4D_errors}(a) we compare the $L^2(\Omega)$ errors of the rank-adaptive method relative 
to the reference solution. It is seen that as we decrease the threshold the solution becomes more accurate.}
In Figure \ref{fig:FP_4D_errors}(b)
we plot the component of $\mathcal{L} {p_{\bm r}}$ 
normal to the tensor manifold, which is approximated using the 
two-point BDF formula \eqref{backwards_fd_1st_order}. 
Note that in the rank-adaptive FTT solution with thresholds 
$\epsilon_{\text{inc}}=10^{-3}$ and $\epsilon_{\text{inc}}=10^{-4}$ the solver 
performs both mode addition as well as mode removal.
This is documented in Figure \ref{fig:4D_FP_ranks}. 
{
The abrupt change in rank observed in Figure \ref{fig:4D_FP_ranks}(a)-(c) near time $t=0.4$ 
corresponding to the rank-adaptive solution with threshold $\epsilon{\text{inc}} = 10^{-4}$ is due 
to the time step size $\Delta t$ being equal to $\epsilon_{\text{inc}}$.
This can be justified as follows. 
Recall that the solution is first order accurate in $\Delta t$ and therefore the 
approximation of the component of $\mathcal{L} p_{\bm r}$ normal to the tensor manifold $\mathcal{M}_{\bm r}$ 
is first-order accurate in $\Delta t$. If we set $\epsilon_{\text{inc}} \leq \Delta t$, then the rank-adaptive scheme 
may overestimate the number of modes needed to achieve accuracy on the order of $\Delta t$. 
This does not affect the accuracy of the numerical solution due to the robustness 
of the Lie-Trotter integrator to over-approximation \cite{Lubich_2015}. 
Moreover we notice that the rank-adaptive scheme removes the unnecessary modes 
ensure that the 
tensor rank is not unnecessarily large 
(see section \ref{sec:decreasing_rank}).} 
In fact, the 
diffusive nature of the Fokker-Plank equation on the flat torus 
$\mathbb{T}^4$ yields relaxation to a statistical 
equilibrium state that depends on the drift and diffusion 
coefficients in \eqref{Fokker_Planck}. Such an equilibrium state 
may be well-approximated by a low-rank FTT tensor.

\begin{figure}[t]
\centerline{\footnotesize\hspace{0.5cm} (a) \hspace{7.1cm} (b) }
	\centering
		\includegraphics[width=0.45\textwidth]{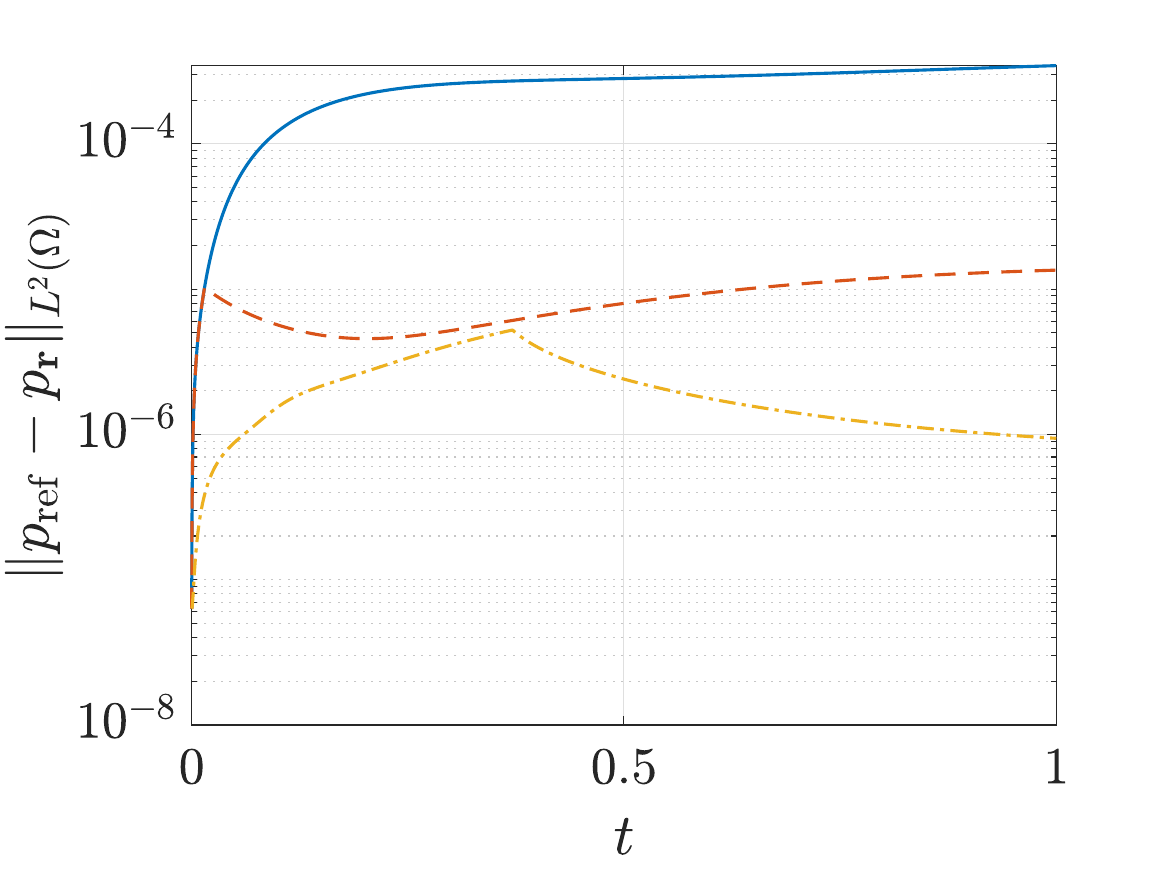}
		\includegraphics[width=0.45\textwidth]{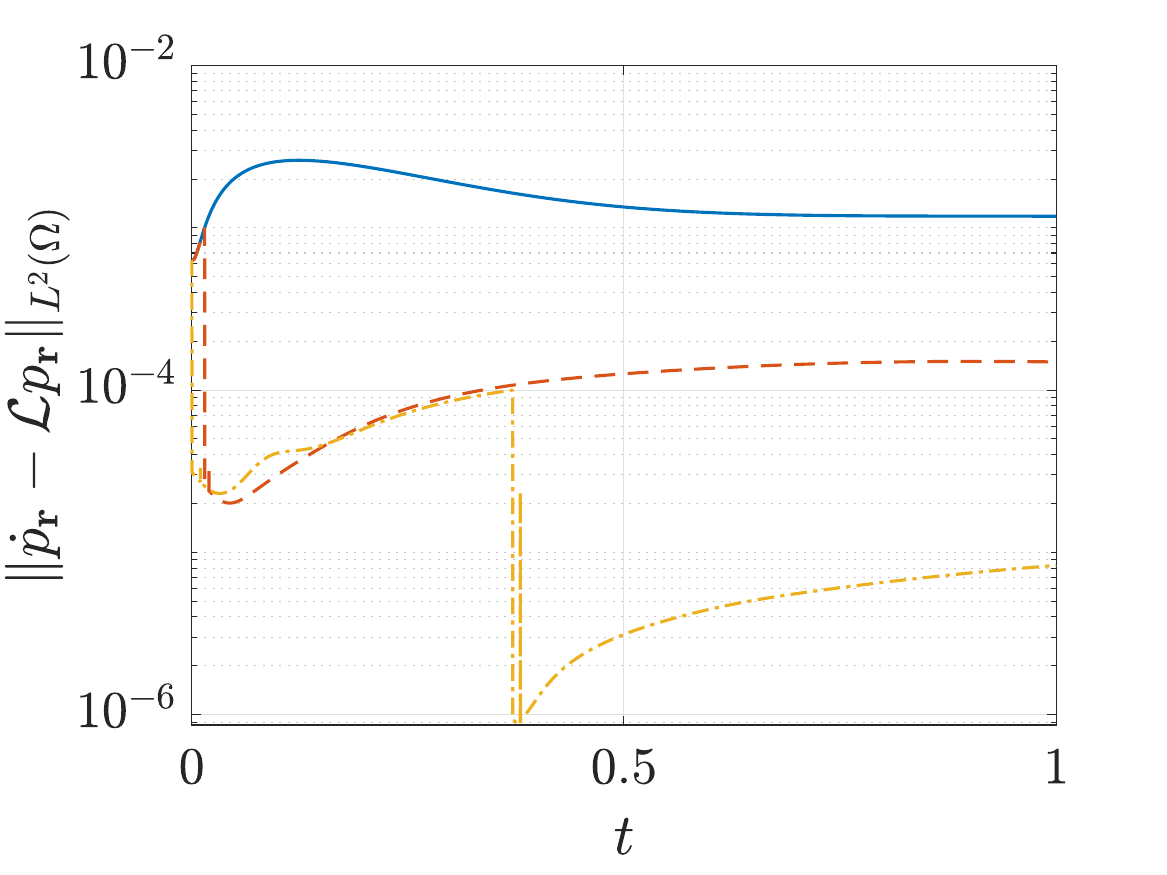} \\
		\hspace{0.8cm}\includegraphics[scale=0.5]{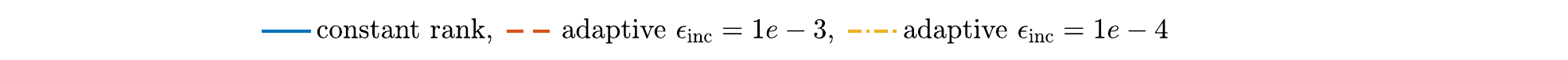}
\caption{
(a) The $L^2(\Omega)$ error of the FTT solution ${p_{\bm r}}(\bm x,t)$ 
relative to the benchmark solution $p_{\text{ref}}(\bm x,t)$ computed with a Fourier pseudo-spectral method on a tensor product grid. 
(b) Norm of the component of $\mathcal{L} {p_{\bm r}}$ normal to the tensor manifold (see Figure \ref{fig:normal component}). Such component is approximated a two-point BDF formula at each time step.}
\label{fig:FP_4D_errors}
\end{figure}

\begin{figure}[t]
\centerline{\footnotesize\hspace{0.25cm} (a) \hspace{5.1cm} (b) \hspace{5.1cm} (c) }
	\centering
		\includegraphics[width=0.33\textwidth]{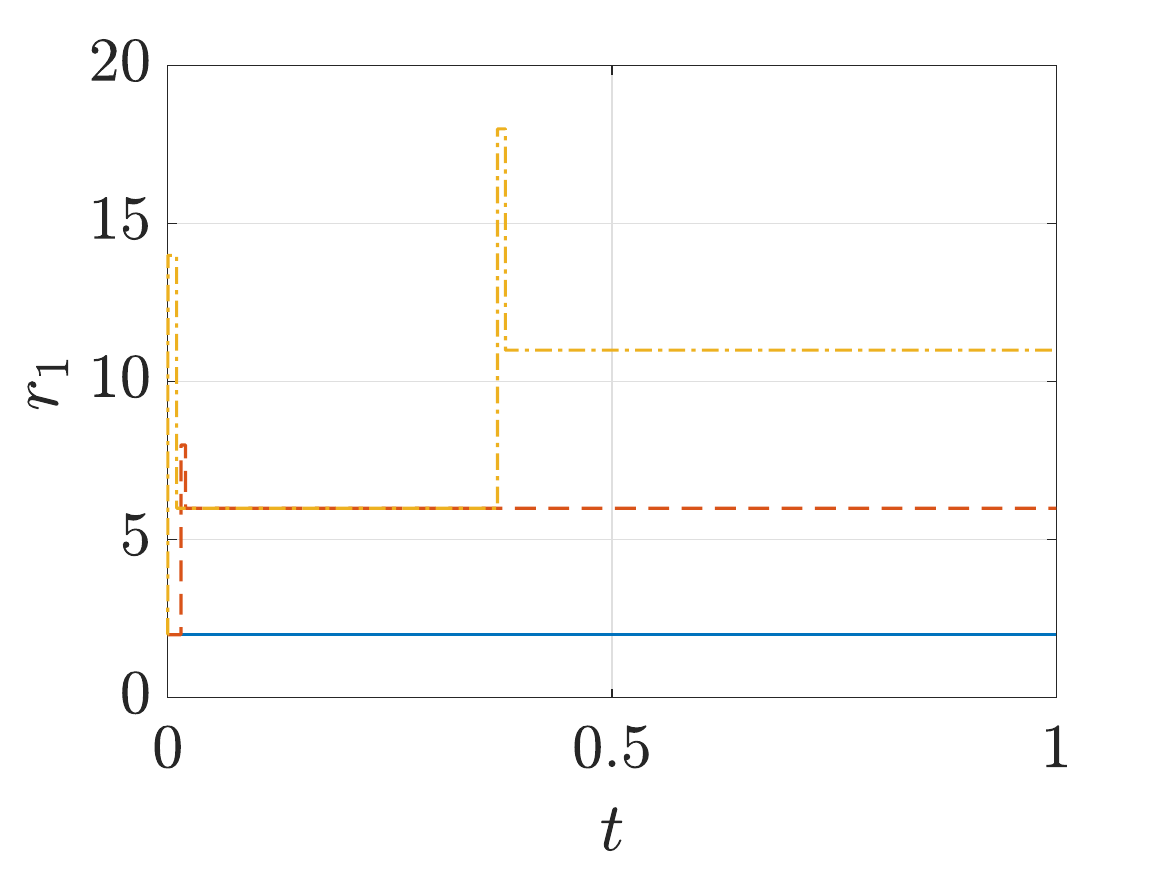}
		\includegraphics[width=0.33\textwidth]{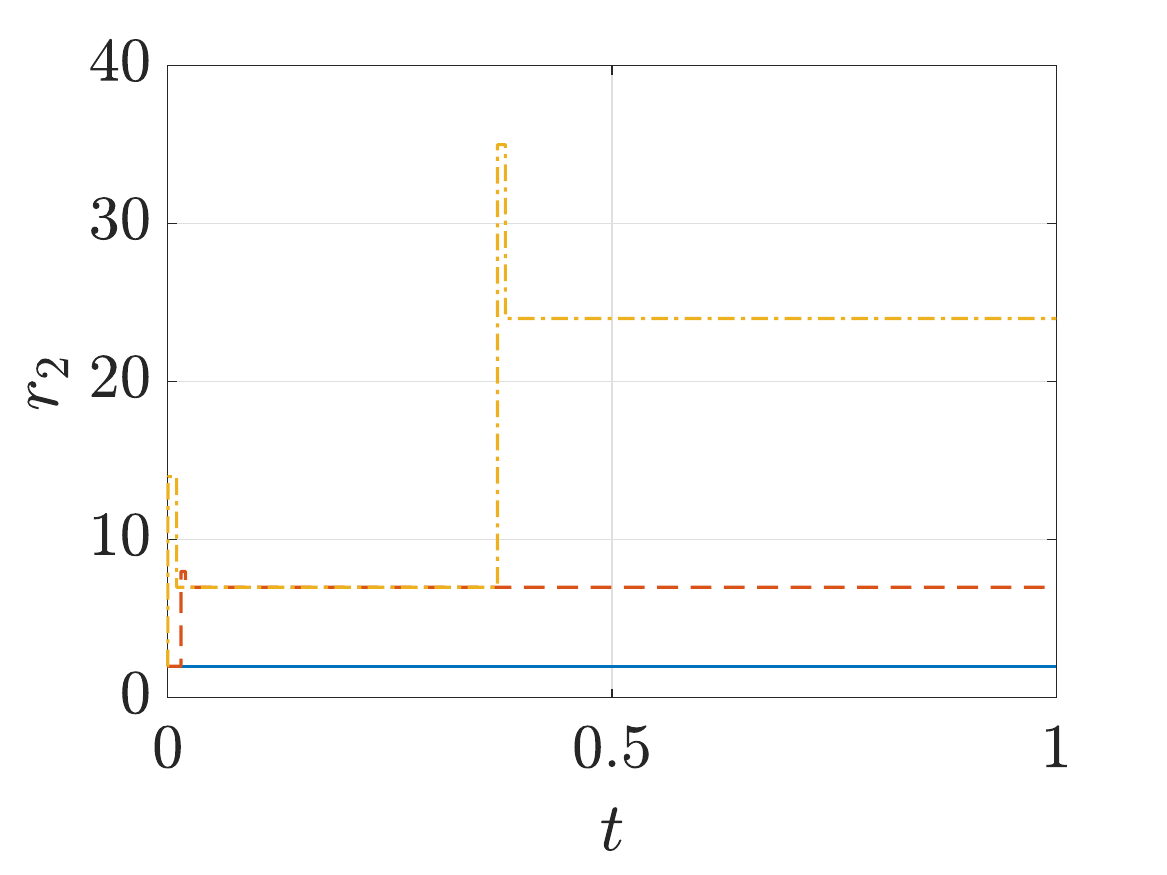}
		\includegraphics[width=0.33\textwidth]{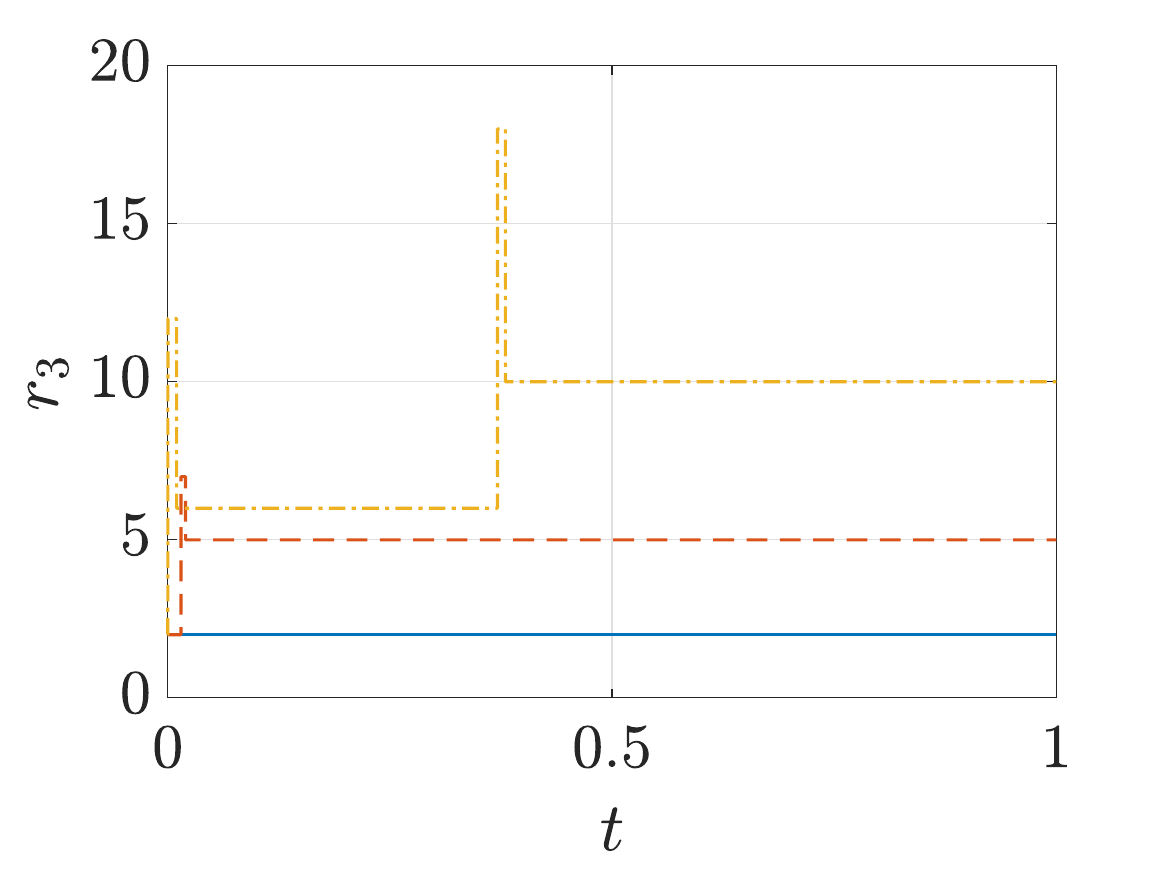}	\\
		\hspace{0.8cm}\includegraphics[scale=0.5]{legend_4D_FP-eps-converted-to.pdf}

\caption{Tensor rank $\bm r=[ 1 \, r_1 \, r_2\,r_3 \, 1]$ of adaptive FTT solution to 
the four dimensional Fokker-Planck equation \eqref{Fokker_Planck}.}
\label{fig:4D_FP_ranks}
\end{figure}

\section{Summary}
\label{sec:summary}

We presented a new rank-adaptive tensor 
method to integrate high-dimensional 
initial-boundary value problems for nonlinear PDEs.  
The new method  is based on functional tensor 
train (FTT) expansions 
\cite{Dektor_dyn_approx,OseledetsTT,Bigoni_2016}, operator 
splitting time integration \cite{Lubich_2016,Lubich_2018}, 
and a new rank-adaptive algorithm 
to add and remove tensor modes from the PDE solution 
based on thresholding the component of the 
velocity vector normal to the FTT tensor manifold. 
We tested the proposed new algorithm 
on three different initial/boundary value 
problems including a 2D variable-coefficient 
first-order linear PDE, a 2D Kuramoto-Sivashinsky 
equation, and a 4D Fokker-Planck equation. 
In all cases the adaptive FTT solution was 
compared to a benchmark numerical solution 
constructed with well-established numerical 
methods. 
The numerical results we obtained 
demonstrate that the proposed 
rank-adaptive tensor method 
is effective in controlling the temporal 
integration error, and outperforms known integration methods for 
multidimensional PDEs in 
terms of accuracy, robustness and 
computational cost. 
We also proved that the new method is 
consistent with recently proposed 
step-truncation algorithms \cite{Vandereycken_2019,
rodgers2020step-truncation,rodgers2020stability}
in the limit of small time steps.

\vspace{0.5cm}
\noindent
{\bf Acknowledgements} This research was supported 
by the U.S. Air Force Office of Scientific Research 
(AFOSR) grant FA9550-20-1-0174 and by the U.S. Army 
Research Office (ARO) grant W911NF-18-1-0309.

\bibliographystyle{plain}
\bibliography{bibliography_file}

\end{document}